\documentclass{article}
\usepackage[a4paper,margin=1in]{geometry}
\usepackage[utf8]{inputenc}
\usepackage{lmodern}
\usepackage[leqno]{amsmath}
\usepackage{amssymb,amsfonts,mathtools}\allowdisplaybreaks
\usepackage{bm}
\usepackage{microtype}
\usepackage{algorithm}
\usepackage{graphicx}
\graphicspath{{figure/}}
\usepackage[caption=false]{subfig}
\usepackage{epstopdf}
\usepackage[amsmath,thmmarks,hyperref]{ntheorem}
\usepackage[hidelinks]{hyperref}
\usepackage[capitalize,nameinlink]{cleveref}
\usepackage{algpseudocode}
\crefname{section}{section}{sections}
\crefname{subsection}{subsection}{subsections}
\crefname{subsubsection}{subsubsection}{subsubsections}

\newcommand{\vect}[1]{\bm{#1}}

\newcommand{\tens}[1]{\bm{\mathsf{#1}}}

\newcommand{\ii}{\mathrm{i}}
\newcommand{\ee}{\mathrm{e}}
\newcommand{\C}{\mathbb{C}}
\newcommand{\R}{\mathbb{R}}

\newcommand{\eps}{\varepsilon}
\newcommand{\calA}{\mathcal{A}}

\newcommand{\calC}{\mathcal{C}}

\newcommand{\calH}{\mathcal{H}}
\newcommand{\HankelSpace}{\mathfrak{H}}
\newcommand{\calG}{\mathcal{G}}

\newcommand{\calP}{\mathcal{P}}
\newcommand{\calQ}{\mathcal{Q}}
\newcommand{\calR}{\mathcal{R}}

\newcommand{\calT}{\mathcal{T}}
\newcommand{\calU}{\mathcal{U}}

\newcommand{\calK}{\mathcal{K}}
\newcommand{\calI}{\mathcal{I}}
\newcommand{\rank}{\operatorname{rank}}

\newcommand{\range}{\operatorname{range}}

\newcommand{\Arg}{\operatorname{Arg}}
\newcommand{\tr}{\operatorname{tr}}

\newcommand{\one}{\mathbf{1}}

\newcommand{\norm}[1]{\left\lVert #1\right\rVert}
\newcommand{\abs}[1]{\left\lvert #1\right\rvert}

\newcommand{\inner}[2]{\left\langle #1,#2\right\rangle_F}
\makeatletter
\renewtheoremstyle{plain}
 {\item[\hskip\labelsep\hskip\parindent \theorem@headerfont ##1\ \textup{##2\theorem@separator}]}%
 {\item[\hskip\labelsep\hskip\parindent \theorem@headerfont ##1\ \textup{##2}]\textup{(##3)\theorem@separator}\ }

\renewtheoremstyle{nonumberplain}%
 {\item[\theorem@headerfont\hskip\labelsep\hskip\parindent ##1\theorem@separator]}%
{\item[\theorem@headerfont\hskip\labelsep\hskip\parindent ##3\theorem@separator]}

\newcommand{\proofbox}{\vbox{\hrule height0.6pt\hbox{\vrule height1.3ex width0.6pt\hskip0.8ex\vrule width0.6pt}\hrule height0.6pt}}

\theorempreskip{1ex plus .25ex minus .1ex}
\theorempostskip{1ex plus .25ex minus .1ex}

\theoremstyle{nonumberplain}
\theoremheaderfont{\normalfont\itshape}
\theorembodyfont{\normalfont}
\theoremseparator{.}
\theoremsymbol{\proofbox}
\newtheorem{proof}{Proof}

\theoremstyle{plain}
\theoremheaderfont{\normalfont\sc}
\theorembodyfont{\normalfont\itshape}
\theoremseparator{.}
\theoremsymbol{}
\newtheorem{theorem}{Theorem}

\newcommand{\newsiamthm}[2]{
  \theoremstyle{plain}
  \theoremheaderfont{\normalfont\sc}
  \theorembodyfont{\normalfont\itshape}
  \theoremseparator{.}
  \theoremsymbol{}
  \newtheorem{#1}[theorem]{#2}
}

\newsiamthm{lemma}{Lemma}
\newsiamthm{corollary}{Corollary}
\newsiamthm{proposition}{Proposition}
\newsiamthm{definition}{Definition}

\newcommand{\newsiamremark}[2]{
  \theoremstyle{plain}
  \theoremheaderfont{\normalfont\itshape}
  \theorembodyfont{\normalfont}
  \theoremseparator{.}
  \theoremsymbol{}
  \newtheorem{#1}[theorem]{#2}
}
\makeatother
\numberwithin{theorem}{section}
\numberwithin{equation}{section}
\numberwithin{algorithm}{section}
\newsiamremark{remark}{Remark}
\newsiamremark{assumption}{Assumption}
\newsiamremark{condition}{Condition}
\newsiamremark{example}{Example}

\date{}
\title{Off-Grid Point-Scatterer Localization from Sparse Limited-Aperture Data via Hankel Completion}

\author{
Jingzhi Li\thanks{Department of Mathematics, Southern University of Science and Technology, \& Shenzhen International Center for Mathematics, Shenzhen, China. email: li.jz@sustech.edu.cn}
\and
Xiliang Lu\thanks{School of Mathematics and Statistics, Hubei Center for Applied Mathematics, and Hubei Key Laboratory of Computational Science, Wuhan University, Wuhan 430072, China. email: xllv.math@whu.edu.cn}
\and
Juntao You\thanks{(Corresponding author.) School of Artificial Intelligence, Hubei Center for Applied Mathematics, and Hubei Key Laboratory of Computational Science, Wuhan University, Wuhan 430072, China. email: youjuntao@whu.edu.cn }
}

\newenvironment{keywords}{\par\smallskip\noindent\textbf{Keywords.} }{\par}
\newenvironment{MSCcodes}{\par\smallskip\noindent\textbf{MSC codes.} }{\par}

\begin{document}
\maketitle

\begin{abstract}
We study the localization of $s$ distinct planar point scatterers from sparsely sampled single-incidence, single-frequency acoustic far-field data over a limited receiver aperture. Under the first Born approximation, the coherent data form a two-dimensional off-grid exponential sum and admit a low-rank Hankel representation. A limited aperture generally admits multiple aperture-admissible Hankel pencils with different lifted dimensions, sampling multiplicities, and Vandermonde conditioning. Hence, the choice of the Hankel pencil affects both data completion and subsequent localization.

In this work, we introduce a unified framework for structured completion and off-grid localization based on general aperture admissible Hankel pencils. We establish exact recovery in the noiseless case and stable recovery under bounded noise from a number of random aperture samples that is linear in $s$, up to factors determined by the pencil geometry, source conditioning, and logarithmic terms, together with an explicit localization bound in terms of the completion error. The analysis separates the contribution of the pencil geometry from the conditioning induced by the source configuration and provides verifiable sufficient conditions for stable completion and MUSIC resolution. These results yield quantitative criteria for selecting sample efficient Hankel pencils under a prescribed physical aperture. Numerical experiments on connected and disconnected apertures demonstrate the effects of pencil geometry, sampling, and noise.
\end{abstract}

\begin{keywords}
 Inverse acoustic scattering, point scatterers, limited aperture, off-grid localization, spectral compressed sensing, low-rank Hankel completion, MUSIC
\end{keywords}

\begin{MSCcodes}
35R30, 15A83, 65F55, 65N21, 94A12
\end{MSCcodes}

\section{Introduction}\label{sec:intro}

Inverse scattering aims to determine unknown scatterers or material inhomogeneities from measurements of scattered waves. It is fundamental in radar and sonar imaging, remote sensing, geophysical exploration, medical imaging, and nondestructive evaluation \cite{ColtonKress2019,CakoniColtonHaddar2022}. In many applications, the unknown scene is sparse or can be well approximated by a finite collection of localized scatterers \cite{AmmariIakovlevaMoskow2003,Fannjiang2010I,Fannjiang2010II}. A central challenge is therefore to recover their off-grid locations from sparse measurements available only over a restricted observation geometry.

Data-limited inverse acoustic scattering has motivated sampling methods for one or a few incident fields and for restricted or nonstandard acquisition configurations \cite{Fannjiang2011,ItoJinZou2012,LiZou2013,KangLim2022,NingZou2025,GarnierHaddarMontanelli2023}, as well as data-recovery approaches that infer effective full-aperture information from limited-aperture measurements \cite{LiuSun2019,DouLiuMengZhang2022}. We focus on single-incidence, fixed-frequency acoustic scattering from finitely many point scatterers supported on a known plane \cite{ChallaSini2012}; see \Cref{fig:ainverseofs}. Measurements are available only over a limited receiver aperture, and only a sparse subset of the accessible data is observed. The aperture determines which scattering information is physically accessible, whereas the sampling pattern determines which of these data is actually acquired. In practice, the accessible aperture need not have a simple connected geometry; occlusion by obstacles, restrictions on sensor placement or platform motion, limited fields of view, and other acquisition constraints may lead to nonrectangular or even disconnected observation regions.

Under the first Born approximation \cite{Natterer2004,AmmariIakovlevaMoskow2003}, the far-field measurements can be represented as samples of a two-dimensional off-grid exponential sum, with the scatterer locations encoded continuously in its phases. The receiver aperture therefore induces a geometry-constrained frequency domain, while sparse acquisition provides only incomplete observations over that domain. The inverse scattering problem is thus reduced to recovering and localizing a sparse continuous exponential model from partial data on a frequency set whose geometry is inherited directly from the physical acquisition configuration.

The resulting exponential structure connects the problem to off-grid sparse recovery and spectral super-resolution \cite{TangBhaskarShahRecht2013,CandesFernandezGranda2014,CatalaDuvalPeyre2019}, with continuous sparse formulations also arising directly in point-scatterer inverse scattering \cite{AlbertiPetitSantacesaria2024}. General-domain Hankel structures and frequency estimation on nonstandard index sets have been studied in \cite{AnderssonCarlsson2015,AnderssonCarlsson2018}. The sparse-observation Hankel recovery guarantees in \cite{ChenChi2014,CaiWangWei2019,CaiCaiYou2023,CaiHuangLuYou2025}, however, are formulated for prescribed interval or rectangular pencils. In limited-aperture scattering, the accessible frequency domain can be nonrectangular or disconnected and may admit multiple aperture-admissible pencils with different lifted dimensions, sampling multiplicities, and Vandermonde conditioning. We therefore establish exact and stable sparse-sampling recovery for general aperture-admissible Hankel pencils, quantify the dependence on the pencil geometry and source conditioning, and propagate the completion error to subsequent off-grid localization.

Our main contributions are as follows.
\begin{enumerate}
\item[(i)] We develop a unified reconstruction framework based on aperture-admissible Hankel pencils $A,B\subset\mathbb Z^2$ satisfying $A+B\subset S_{\rm ap}$. The missing coherent data are recovered by structured low-rank completion, followed by two-sided MUSIC for localization and least squares for amplitude estimation.

\item[(ii)] We establish exact recovery with high probability from $O(\Gamma_{A,B}\mu_1c_{A,B}s\log^2|S|)$ samples, together with stability under bounded noise and an explicit localization bound. Under verifiable geometric and minimum separation conditions, this reduces to $O(s\log^4|S|)$ samples. The theory quantitatively relates the sampling complexity and localization stability to the pencil geometry and the number of scatterers.

\item[(iii)]  We derive explicit bounds on the geometric and conditioning factors: rectangular decompositions control $\Gamma_{A,B}$, while Cartesian cores and minimum separation control the Vandermonde conditioning and MUSIC resolution. These bounds yield quantitative criteria for selecting sample efficient aperture admissible pencils.

\item[(iv)] Numerical experiments on connected and disconnected apertures validate the proposed method and the theoretical criteria for pencil selection under sparse sampling and noise.
\end{enumerate}

\begin{figure}[t]
\centering
{%
\includegraphics[width=0.68\textwidth,
    trim=0.5cm 0.5cm 0cm 0.8cm,clip]{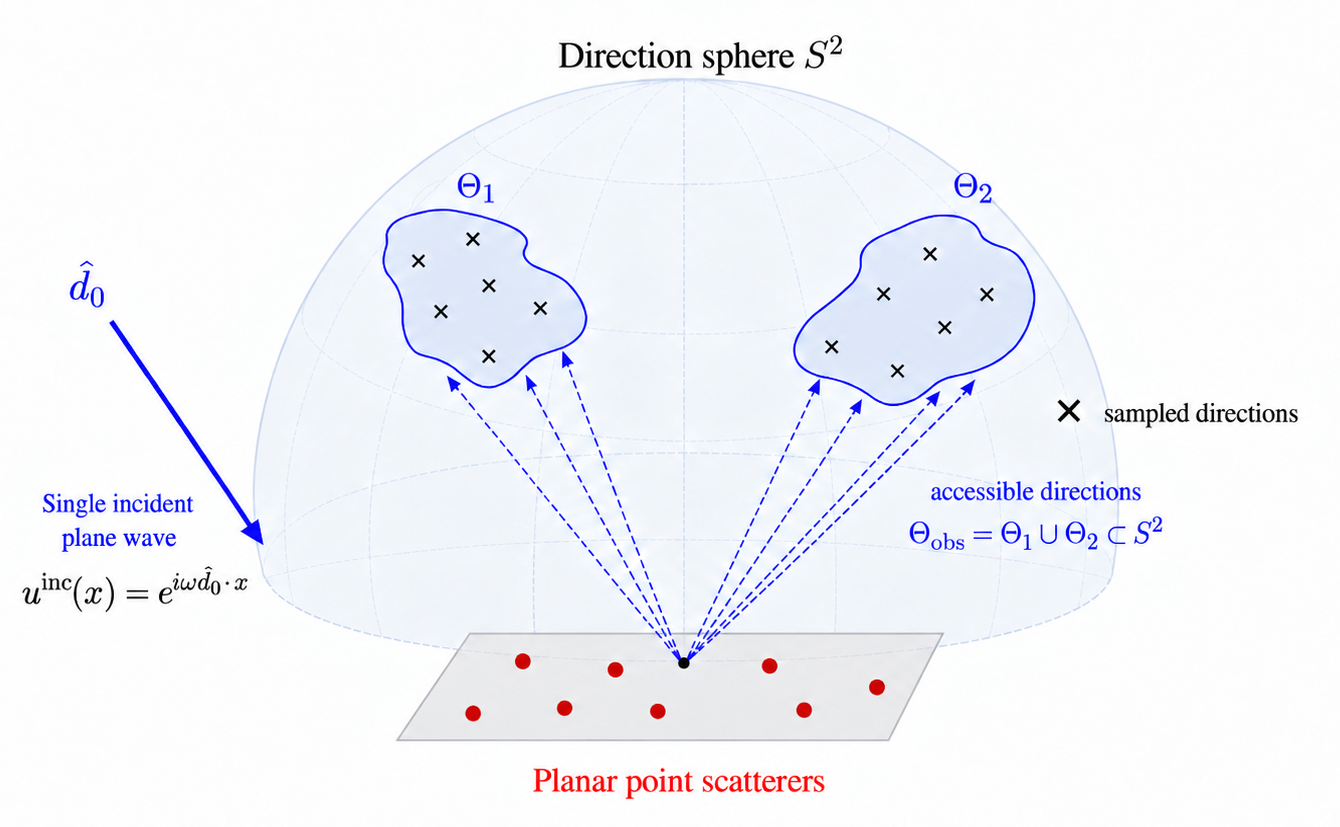}%
}{}
\caption{Single-incidence, single-frequency scattering from planar point targets, with sparse observations collected over a limited far-field acquisition surface.}
\label{fig:ainverseofs}
\end{figure}
\paragraph{Notation and organization}
Bold lowercase letters denote vectors and bold sans-serif uppercase letters denote matrices. For a scalar $a$, $\abs{a}$ denotes its modulus, while $|S|$ and $\#S$ denote the cardinality of a finite set $S$. For a vector $\vect x$, $\norm{\vect x}_2$ and $\norm{\vect x}_\infty$ denote the Euclidean and maximum norms, respectively. For a matrix $\tens M$, $\norm{\tens M}_2$, $\norm{\tens M}_F$, and $\norm{\tens M}_*$ denote the spectral, Frobenius, and nuclear norms, respectively, and $\tens M^T$, $\tens M^*$, and $\overline{\tens M}$ denote its transpose, conjugate transpose, and entrywise complex conjugate. For a linear operator $\mathcal L$ on a matrix space, $\norm{\mathcal L} :=\sup_{\norm{\tens M}_F=1}\norm{\mathcal L(\tens M)}_F$. We write $\inner{\tens X}{\tens Y}=\tr(\tens X^*\tens Y)$ for the Frobenius inner product, $\sigma_j(\tens M)$ for the $j$th largest singular value of $\tens M$, and $\lambda_{\min}(\tens M)$ for the smallest eigenvalue of a Hermitian matrix $\tens M$.  We write $f=\mathcal O(g)$ if $\abs{f}\le Cg$ for some constant $C>0$ independent of the asymptotic variables specified in the corresponding statement.

Section~\ref{sec:model} specifies the Born data model, Section~\ref{sec:hankel} gives the reconstruction, and Section~\ref{sec:conditioning} states the recovery and localization results. Sections~\ref{sec:experiments} and~\ref{sec:conclusion} present the experiments and concluding remarks. The proofs are collected in the appendix. 

\section{Born data model and aperture discretization}\label{sec:model}

\subsection{Planar point-scatterer model}

Let $u^{\rm inc}(\vect x;\hat{\vect d})= \exp(\ii\omega\hat{\vect d}\cdot\vect x)$, where $\omega>0$. For a compactly supported contrast $\eta$, the total field solves
\[
\Delta u^t+\omega^2(1+\eta)u^t=0\quad\text{in }\mathbb R^3,
\qquad u^t=u^{\rm inc}+u^s,
\]
with $u^s$ outgoing. Replacing $u^t$ by $u^{\rm inc}$ in the scattering integral gives the first Born far-field pattern
\begin{equation}\label{eq:born-ff-new}
 u_{\rm B}^\infty(\hat{\vect r},\hat{\vect d})
 =\frac{\omega^2}{4\pi}\int_{\mathbb R^3}\eta(\vect y)
 \exp\!\bigl[-\ii\omega(\hat{\vect r}-\hat{\vect d})\cdot\vect y\bigr]
 \,\mathrm d\vect y.
\end{equation}
The Fourier integral extends to compactly supported finite complex Radon measures. We take
\[
 \eta=\sum_{j=1}^s\xi_j\delta_{(t_{1,j},t_{2,j},0)^T},
 \qquad \xi_j\in\mathbb C\setminus\{0\},
\]
fix the incident direction $\hat{\vect d}_0$, and set
\[
 \vect t_j=(t_{1,j},t_{2,j})^T,
 \qquad
 c_j=\frac{\omega^2}{4\pi}\xi_j
 \exp\!\bigl[\ii\omega(t_{1,j},t_{2,j},0)^T\cdot\hat{\vect d}_0\bigr].
\]
With $\vect q=\omega(\hat r_1,\hat r_2)^T$, equation~\eqref{eq:born-ff-new} becomes
\begin{equation}\label{eq:Aq-new}
 F(\vect q)=\sum_{j=1}^s c_j\exp(-\ii\vect q\cdot\vect t_j).
\end{equation}
Equation~\eqref{eq:Aq-new} is the forward model analyzed below. Measurement noise, finite-radius effects, multiple scattering, and other departures from the Born point-scatterer model are included in the perturbation term introduced in \eqref{eq:partial-aperture-observations}.

\begin{remark}[Two-reference intensity data]
If only intensities are measured, let $r_m(\vect q)=\alpha_m\exp(-\ii\vect q\cdot\vect a_m)$, $m=1,2$, be calibrated reference fields. From $I_0=|F|^2$ and $I_m=|F+r_m|^2$, one obtains $[I_m-I_0-|r_m|^2]/2=\operatorname{Re}(\overline r_m F)$. The resulting two real equations determine the complex value $F(\vect q)$ whenever $\operatorname{Im}(\overline r_1r_2)\ne0$. The subsequent analysis starts from the coherent samples obtained directly or by this pointwise preprocessing.
\end{remark}

\subsection{Limited aperture and the recovery problem}
\label{subsec:aperture-disk}

Let the receiver locations be $\rho(\hat{\vect r})\hat{\vect r}$ for $\hat{\vect r}\in\Theta_{\rm obs}\subset\{\hat{\vect r}\in\mathbb S^2: \hat r_3>0\}$, with $\rho(\hat{\vect r})\ge R_0$. The demodulated finite-radius field is
\begin{equation}\label{eq:finite-radius-demodulated}
 F_{\rm fr}(\hat{\vect r})
 :=\rho(\hat{\vect r})\exp[-\ii\omega\rho(\hat{\vect r})]
 u^s(\rho(\hat{\vect r})\hat{\vect r};\hat{\vect d}_0).
\end{equation}
The standard far-field expansion gives $F_{\rm fr}(\hat{\vect r})=u^\infty(\hat{\vect r},\hat{\vect d}_0) +O(R_0^{-1})$ uniformly on the accessible directions; under the Born model, $u^\infty$ is replaced by \eqref{eq:Aq-new}.

The accessible transverse-frequency region is
\[
 D_q=\{\omega(\hat r_1,\hat r_2)^T:
 \hat{\vect r}\in\Theta_{\rm obs}\}\subset\{\vect q:\|\vect q\|_2\le\omega\}.
\]
For example, a conical aperture of half-angle $\theta_{\max}$ gives the disk $\|\vect q\|_2\le\omega\sin\theta_{\max}$. Choose spacings $\Delta q_1,\Delta q_2>0$ and define
\begin{equation}\label{eq:S-ap-def}
\begin{aligned}
 \vect q_k&=(q_{1,0}+k_1\Delta q_1,
              q_{2,0}+k_2\Delta q_2)^T,\\
 S_{\rm ap}&=\{k\in\mathbb Z^2:\vect q_k\in D_q\}.
\end{aligned}
\end{equation}
The set $S_{\rm ap}$ is finite because $D_q$ is bounded. We observe the far-field data only on a subset $\Omega_{\rm obs}\subset S_{\rm ap}$:
\begin{equation}\label{eq:partial-aperture-observations}
 y_k=F(\vect q_k)+e_k,\qquad k\in\Omega_{\rm obs}.
\end{equation}
Then, the objective is to recover the locations and strengths $\{(\vect t_j,\xi_j)\}_{j=1}^s$ from these partial data $\{y_k\}_{k\in\Omega_{\rm obs}}$.

\section{Partial-aperture data recovery via low-rank Hankel completion}\label{sec:hankel}

The exponential-sum model above leads to a two-stage reconstruction: we first complete the coefficients on an aperture-admissible sum set $S=A+B\subset S_{\rm ap}$ and then estimate the continuous scatterer parameters from the completed Hankel matrix. This section formulates both stages, while the recovery and localization guarantees are developed in \Cref{sec:conditioning}.

\subsection{Aperture-admissible Hankel pencils}\label{subsec:hankel-structure}

To exploit the finite point-scatterer structure in the partial aperture data, we first express the noiseless samples on the Cartesian frequency grid. Write $x_k=F(\vect q_k)$. Substituting \eqref{eq:S-ap-def} into \eqref{eq:Aq-new} yields
\begin{equation}\label{eq:X-expsum-new}
x_{k_1,k_2}=\sum_{j=1}^s\widetilde c_j z_{1,j}^{k_1}z_{2,j}^{k_2},
\quad
\widetilde c_j=c_j\exp[-\ii(q_{1,0}t_{1,j}+q_{2,0}t_{2,j})],
\quad
z_{\ell,j}=\exp(-\ii\Delta q_\ell t_{\ell,j}).
\end{equation}
Thus the aperture data form a finite two-dimensional exponential sum on $S_{\rm ap}$.

To incorporate the aperture geometry into the Hankel lifting, let $A,B\subset\mathbb Z^2$ be nonempty finite sets and set $S=A+B$. For $\vect x=(x_k)_{k\in S}$, define
\[
[\calH_{A,B}(\vect x)]_{\alpha,\beta}=x_{\alpha+\beta},
\qquad
\alpha\in A,\quad \beta\in B.
\]
We call $(A,B)$ \emph{aperture-admissible} if $A+B\subset S_{\rm ap}$. This condition ensures that every entry of the lifted matrix is indexed by a frequency accessible through the prescribed aperture. Rectangular choices recover the usual block-Hankel lifting, whereas general-domain Hankel structures allow nonstandard index sets \cite{AnderssonCarlsson2015,AnderssonCarlsson2018}. Here admissibility additionally requires $A+B\subset S_{\rm ap}$. For example, \Cref{fig:aperture-admissible-pencil} illustrates this construction for a disconnected aperture; nonrectangular or disconnected accessible frequency sets (see also \Cref{fig:sector-music}) similarly lead to nonstandard admissible pencils.

\begin{figure}[t]
\centering
\includegraphics[width=0.82\linewidth]{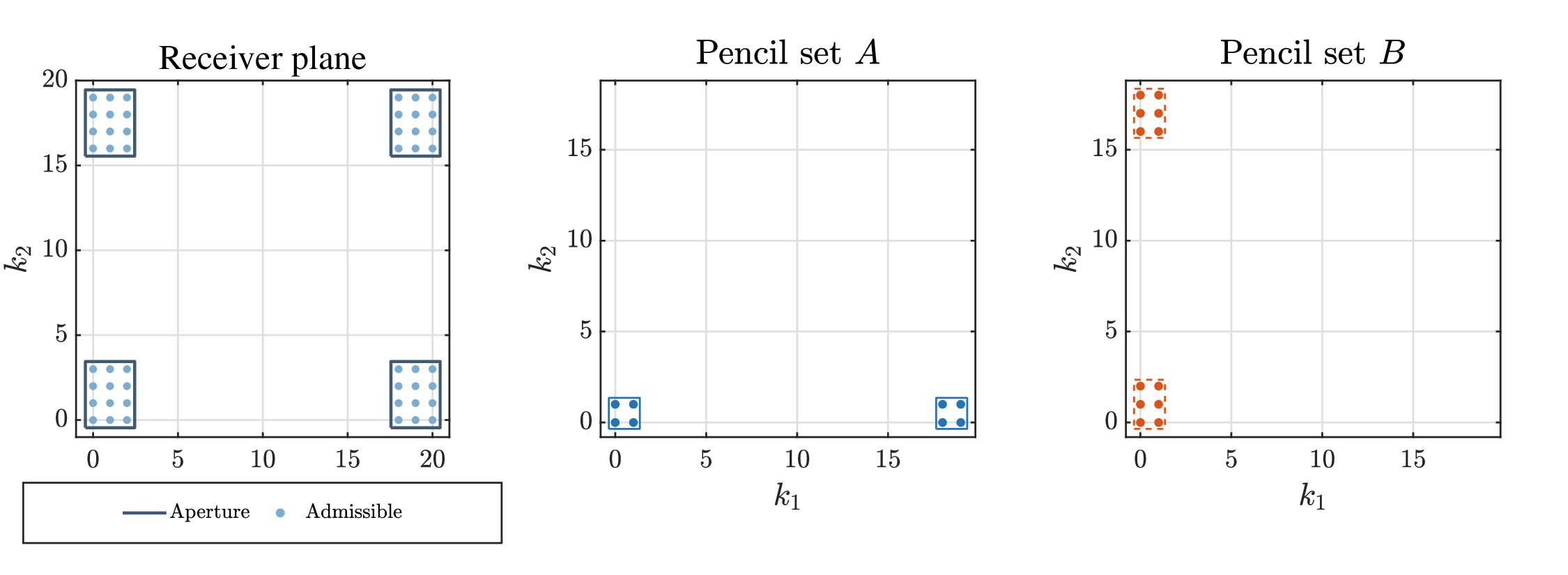}
\caption{Illustration of an aperture-admissible Hankel pencil for a disconnected acquisition geometry. The receiver aperture induces a nonrectangular accessible frequency set $S_{\rm ap}$. Finite index sets $A$ and $B$ satisfying $A+B\subset S_{\rm ap}$ define a Hankel pencil compatible with the physical aperture.}
\label{fig:aperture-admissible-pencil}
\end{figure}

For any aperture-admissible pair $(A,B)$, the additive indexing of the exponential data in \eqref{eq:X-expsum-new} gives
\begin{equation}\label{eq:pencil-factor-new}
\tens M_\star=\calH_{A,B}(\vect x)=\tens V_A\tens D\tens V_B^T,
\qquad
[\tens V_A]_{\alpha,j}=z_{1,j}^{\alpha_1}z_{2,j}^{\alpha_2},
\quad
[\tens V_B]_{\beta,j}=z_{1,j}^{\beta_1}z_{2,j}^{\beta_2},
\end{equation}
where $\tens D=\operatorname{diag}(\widetilde c_1,\ldots,\widetilde c_s)$. The factorization yields the following rank property.

\begin{proposition}\label{prop:pencil-rank-physical}
For any aperture-admissible pair $(A,B)$, we have
\[
\rank\calH_{A,B}(\vect x)\le s.
\]
If $\widetilde c_j\neq0$ for all $j$ and both $\tens V_A$ and $\tens V_B$ have full column rank, then $\rank\calH_{A,B}(\vect x)=s$.
\end{proposition}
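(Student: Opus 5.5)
The plan is to argue directly from the factorization \eqref{eq:pencil-factor-new}. The first step is to confirm it entrywise. For $\alpha\in A$ and $\beta\in B$, admissibility gives $\alpha+\beta\in S\subset S_{\rm ap}$, so the entry $x_{\alpha+\beta}$ is defined and given by \eqref{eq:X-expsum-new}. Expanding,
\[
x_{\alpha+\beta}=\sum_{j=1}^s \widetilde c_j\, z_{1,j}^{\alpha_1}z_{2,j}^{\alpha_2}\, z_{1,j}^{\beta_1}z_{2,j}^{\beta_2},
\]
which is exactly $[\tens V_A\tens D\tens V_B^T]_{\alpha,\beta}$. Negative exponents cause no difficulty, since $|z_{\ell,j}|=1$.

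For the upper bound, I will use the standard inequality $\rank(\tens X\tens Y)\le\min(\rank\tens X,\rank\tens Y)$. Here $\tens D$ is $s\times s$, so the product has rank at most $s$.

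For equality, I will show that each factor, after composition, keeps rank $s$. If all $\widetilde c_j\neq0$, then $\tens D$ is invertible. Since $\tens V_B$ has full column rank $s$, the matrix $\tens V_B^T$ ($s\times|B|$) has full row rank, so $\tens D\tens V_B^T$ also has full row rank $s$. Its range is therefore all of $\C^s$. It follows that $\range(\tens V_A\tens D\tens V_B^T)=\tens V_A(\C^s)$, and this space has dimension $s$ because $\tens V_A$ is injective.

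An equivalent route is Sylvester's inequality: $\rank(\tens X\tens Y)\ge\rank\tens X+\rank\tens Y-s$ for inner dimension $s$, applied twice. There is no real obstacle here. The only point needing care is to note that full column rank of $\tens V_B$ requires $|B|\ge s$, and the argument then yields the stated equality.
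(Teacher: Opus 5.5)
Your proposal is correct and follows the paper's route. The paper gives no separate proof and simply reads the rank property off the factorization \eqref{eq:pencil-factor-new}, $\tens M_\star=\tens V_A\tens D\tens V_B^T$, whose inner dimension is $s$; your entrywise check of that factorization and your range argument for equality (invertible $\tens D$, full row rank of $\tens V_B^T$, injective $\tens V_A$) fill in the details the paper leaves implicit.
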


The low-rank structure above turns the recovery of unobserved aperture samples into a structured matrix completion problem. Sparse Hankel completion has been studied extensively for rectangular pencils through both convex and nonconvex approaches \cite{ChenChi2014,CaiWangWei2019,CaiCaiYou2023,CaiHuangLuYou2025}. Here, however, the pencil is constrained by $A+B\subset S_{\rm ap}$ and hence by the aperture geometry. Different admissible choices may have different dimensions, fiber multiplicities, and source-dependent Vandermonde conditioning, leading to different recovery guarantees. We therefore develop a completion theory for general aperture-admissible pencils that retains the dependence on both the pencil geometry and the source configuration; see \Cref{sec:completion-localization-guarantees}.

\subsection{Completion and parameter estimation}\label{subsec:completion-models}

The factorization in \Cref{subsec:hankel-structure} reduces partial-aperture data recovery to structured matrix completion. For rectangular Cartesian domains, related convex and nonconvex Hankel recovery methods are well developed \cite{ChenChi2014,CaiWangWei2019,CaiCaiYou2023,CaiHuangLuYou2025}. In the present setting the admissible pencil itself is selected from the factorizations $A+B\subset S_{\rm ap}$, so its lifted dimensions, fiber multiplicities, and Vandermonde conditioning enter the recovery guarantee. We therefore formulate the completion problem directly on the selected finite pencil.

For a selected aperture-admissible pencil, set $\Omega=\Omega_{\rm obs}\cap S$. With noiseless observations, we solve
\begin{equation}\label{eq:admissible-emac-program-new}
 \min_{\vect u\in\mathbb C^S}\|\calH_{A,B}(\vect u)\|_*
 \quad\text{subject to}\quad u_k=y_k,
 \qquad k\in\Omega.
\end{equation}
If $y_k=x_k+e_k$ and $\|\calP_{\Omega}\vect e\|_2\le\eps$, the noisy estimator is
\begin{equation}\label{eq:physical-noisy-emac-program-new}
 \min_{\vect u\in\mathbb C^S}\|\calH_{A,B}(\vect u)\|_*
 \quad\text{subject to}\quad
 \|\calP_{\Omega}(\vect u-\vect y)\|_2\le\eps.
\end{equation}
These programs are convex. Algorithm~\ref{alg:admm-hankel-completion} records the scaled ADMM implementation used in the experiments. Define the fiber multiplicity $w(k)=\#\{(\alpha,\beta)\in A\times B:\alpha+\beta=k\}$ and
\[
 [\calH_{A,B}^{\dagger}(\tens M)]_k
 =\frac{1}{w(k)}\sum_{\alpha+\beta=k}M_{\alpha,\beta}.
\]
The coefficient update is the weighted projection of $\calH_{A,B}^{\dagger}(\tens Z^{t+1}+\tens Y^t)$ onto the feasible set, because $\|\calH_{A,B}(\vect u)-\tens M\|_F^2 =\sum_{k\in S}w(k)|u_k-[\calH_{A,B}^{\dagger}(\tens M)]_k|^2+ \text{const}$.

\begin{algorithm}[t]
\small
\caption{ADMM for aperture-admissible Hankel completion}
\label{alg:admm-hankel-completion}
\begin{algorithmic}[1]
\State \textbf{Input:} $\{y_k:k\in\Omega\}$, $A,B$, penalty
$\tau>0$, and $\eps$ for \eqref{eq:physical-noisy-emac-program-new}, maximum iteration $T_{\max}$.
\State Set $x_k^0=y_k$ on $\Omega$ and $x_k^0=0$ otherwise;
$\tens Z^0=\calH_{A,B}(\vect x^0)$, $\tens Y^0=0$, and let $\calC$ be the
feasible set in \eqref{eq:admissible-emac-program-new} or
\eqref{eq:physical-noisy-emac-program-new}.
\For{$t=0,1,2,\ldots, T_{\max}$}
\State $\tens Z^{t+1}=\mathcal D_{1/\tau}
(\calH_{A,B}(\vect x^t)-\tens Y^t)$, where $\mathcal D_\lambda$ applies
singular-value soft thresholding at level $\lambda$.
\State $\displaystyle
 \vect x^{t+1}\in\arg\min_{\vect u\in\calC}
 \|\calH_{A,B}(\vect u)-\tens Z^{t+1}-\tens Y^t\|_F^2$.
\State $\tens Y^{t+1}=\tens Y^t+\tens Z^{t+1}
-\calH_{A,B}(\vect x^{t+1})$.
\State Stop when the primal, dual, and feasibility residuals satisfy the prescribed tolerances.
\EndFor
\State \textbf{Output:} $\widehat{\vect x}$ on $S=A+B$.
\end{algorithmic}
\end{algorithm}

After completion, we estimate the locations from both rank-$s$ singular subspaces using MUSIC \cite{Liao2015} and then recover the amplitudes by least squares. Let $\mathcal F\subset\mathbb R^2$ be a compact rectangular field of view. We assume that the true locations lie in the interior of $\mathcal F$. For $C\in\{A,B\}$, define 
\[
 [\vect v_C(\vect t)]_\alpha
 =\exp[-\ii(\alpha_1\Delta q_1t_1+\alpha_2\Delta q_2t_2)],
 \qquad \vect t=(t_1,t_2)^T.
\]
If $\widehat{\tens M}=\widehat{\tens U}\widehat{\tens\Sigma} \widehat{\tens V}^*$ is the rank-$s$ truncated SVD of $\calH_{A,B}(\widehat{\vect x})$, the two-sided MUSIC residual is
\begin{equation}\label{eq:music-residual}
 \widehat{\calR}(\vect t)=\left[
 \frac{\|(\tens I-\widehat{\tens U}\widehat{\tens U}^*)
 \vect v_A(\vect t)\|_2^2}{|A|}
 +\frac{\|(\tens I-\widehat{\tens V}\widehat{\tens V}^*)
 \overline{\vect v_B(\vect t)}\|_2^2}{|B|}
 \right]^{1/2}.
\end{equation}
The grid induces the periods
\begin{equation}\label{eq:phase-location-new}
 t_1\equiv-\Arg(z_1)/\Delta q_1\pmod{2\pi/\Delta q_1},
 \quad
 t_2\equiv-\Arg(z_2)/\Delta q_2\pmod{2\pi/\Delta q_2}.
\end{equation}
We assume that the side lengths of $\mathcal F$ are smaller than these periods. For $\vect t=(t_1,t_2)^T$ and $\vect t'=(t_1',t_2')^T$, let
\[
 d_{\rm per}(\vect t,\vect t')=
 \left[\min_{m\in\mathbb Z}|t_1-t_1'-2\pi m/\Delta q_1|^2
 +\min_{m\in\mathbb Z}|t_2-t_2'-2\pi m/\Delta q_2|^2\right]^{1/2}.
\]
A source set is $\Delta$-separated if all pairwise periodic distances are at least $\Delta$; when $s=1$, $\Delta>0$ is a prescribed localization scale.

\begin{algorithm}[htbp]
\small
\caption{Two-sided general-domain MUSIC}
\label{alg:general-domain-music}
\begin{algorithmic}[1]
\State \textbf{Input:} $\widehat{\vect x}$, $A,B$, rank $s$, and field of view $\mathcal F$.
\State Form $\widehat{\tens M}=\calH_{A,B}(\widehat{\vect x})$ and compute its
leading $s$ left and right singular vectors.
\State Form \eqref{eq:music-residual} and select its $s$ smallest local minima.
\State \textbf{Output:} $\{\widehat{\vect t}_j\}_{j=1}^s$.
\end{algorithmic}
\end{algorithm}
In computation, the continuous minimization in Algorithm~\ref{alg:general-domain-music} is initialized on a search grid and followed by local refinement. Here and below, local minima are understood relative to $\mathcal F$. The perturbation result below concerns the ideal continuous selection step and does not include search-grid or nonlinear-solver error.

Once the locations are available, the amplitudes are obtained from
\[
 \widehat{\vect c}\in\arg\min_{\vect a\in\mathbb C^s}
 \|\widehat{\vect x}-\tens\Phi_S(\widehat{\vect t})\vect a\|_2,
 \qquad
 [\tens\Phi_S(\widehat{\vect t})]_{k,j}
 =\exp(-\ii\vect q_k\cdot\widehat{\vect t}_j),
\]
and $\widehat\xi_j=(4\pi/\omega^2)\widehat c_j \exp[-\ii\omega(\widehat t_{1,j},\widehat t_{2,j},0)^T\cdot\hat{\vect d}_0]$.

\section{Aperture-pencil recovery and localization theory}
\label{sec:conditioning}

The preceding construction separates the inverse problem into Hankel completion on a selected aperture pencil and continuous localization from its signal subspaces. We now quantify the sampling requirement in terms of the source--pencil conditioning and deterministic pencil geometry, and then propagate the completion error to the MUSIC locations.

\subsection{Recovery parameters}\label{sec:conditioning-assumptions}

The physical observation set in \eqref{eq:partial-aperture-observations} may be arbitrary. The probabilistic analysis uses independent sampling on the selected sum set.
\begin{assumption}\label{ass:sampling-model}
Let $S=A+B\subset S_{\rm ap}$, $N=|S|\ge2$, and $\Omega=\{k\in S:\zeta_k=1\}$, where the variables $\zeta_k$ are independent Bernoulli variables with parameter $p\in(0,1]$.
\end{assumption}

The source-dependent parameter is the normalized inverse lower frame bound of the two Vandermonde factors.
\begin{definition}\label{def:pencil-frame}
If $\tens V_A$ and $\tens V_B$ have full column rank, define
\begin{equation}\label{eq:pencil-frame-new}
 \mu_1=\max\left\{
 \frac{|A|}{\lambda_{\min}(\tens V_A^*\tens V_A)},
 \frac{|B|}{\lambda_{\min}(\tens V_B^*\tens V_B)}\right\}.
\end{equation}
\end{definition}
For contiguous Fourier samples, \cite{Liao2015,KunisNagelStrotmann2022} obtained Vandermonde lower bounds under a minimum-separation condition. In the following proposition, we use an extended separation condition for general-domain pencil $(A,B)$  containing rectangles inside. 
\begin{proposition}\label{prop:separationcondition}
Suppose that $A$ and $B$ contain Cartesian rectangles $J_A=J_{A,1}\times J_{A,2}$ and $J_B=J_{B,1}\times J_{B,2}$ with contiguous integer intervals. Set $L_A=\min_i|J_{A,i}|$ and $L_B=\min_i|J_{B,i}|$, and assume $L_A,L_B\ge2$. There is a numerical constant $C_{\rm I}>0$ such that, if the locations are $\Delta$-separated and
\[
 \Delta\ge\frac{C_{\rm I}}{\min\{\Delta q_1,\Delta q_2\}}
 \max\{L_A^{-1},L_B^{-1}\},
\]
then both Vandermonde factors have full column rank and
\[
 \mu_1\le C_{\rm I}
 \max\left\{\frac{|A|}{L_A^2},\frac{|B|}{L_B^2}\right\}.
\]
\end{proposition}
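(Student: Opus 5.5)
Proving this needs only two ingredients. The first is monotonicity of the Gram matrix under enlarging the index set. The second is a known lower frame bound for two-dimensional Fourier/Vandermonde matrices on a full Cartesian grid under a minimum-separation condition, quoted from \cite{Liao2015,KunisNagelStrotmann2022}.

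\textbf{Reduction to the core.} Since $J_A\subset A$, the rows of $\tens V_{J_A}$ form a subset of the rows of $\tens V_A$. Hence
\[
\tens V_A^*\tens V_A=\tens V_{J_A}^*\tens V_{J_A}+\sum_{\alpha\in A\setminus J_A}\vect r_\alpha^*\vect r_\alpha\succeq \tens V_{J_A}^*\tens V_{J_A},
\]
where $\vect r_\alpha$ are the extra rows. Therefore $\lambda_{\min}(\tens V_A^*\tens V_A)\ge\lambda_{\min}(\tens V_{J_A}^*\tens V_{J_A})$, and full column rank of $\tens V_{J_A}$ implies full column rank of $\tens V_A$. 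The same holds for $B$. We may also translate $J_A$ to start at the origin: a shift by $\alpha_0$ multiplies $\tens V_{J_A}$ on the right by the unitary diagonal matrix $\operatorname{diag}(z_{1,j}^{\alpha_{0,1}}z_{2,j}^{\alpha_{0,2}})$, which leaves the Gram spectrum unchanged. We may further shrink $J_A$ to the square $[0,L_A-1]^2$, by the same monotonicity argument.

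\textbf{Square-grid bound.} Write $z_{\ell,j}=\ee^{-\ii\theta_{\ell,j}}$ with $\theta_{\ell,j}=\Delta q_\ell t_{\ell,j}$. The periodic distance in \eqref{eq:phase-location-new} is designed so that $\Delta$-separation of the $\vect t_j$ gives separation of the frequencies $(\theta_{1,j},\theta_{2,j})$ on the torus $\mathbb T^2$. Scaling coordinate $\ell$ by $\Delta q_\ell\ge\min\{\Delta q_1,\Delta q_2\}$, this separation is at least $\min\{\Delta q_1,\Delta q_2\}\Delta$ in the Euclidean torus metric, and hence at least $\min\{\Delta q_1,\Delta q_2\}\Delta/\sqrt2$ in the $\ell^\infty$ wrap-around metric. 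Now invoke the multivariate Ingham/Beurling--Selberg type inequality for the $d$-dimensional Vandermonde matrix on $\{0,\dots,L-1\}^d$, following \cite{KunisNagelStrotmann2022} (see also \cite{Liao2015}). If the nodes are $q$-separated in the $\ell^\infty$ torus metric with $q\ge C/L$, then
\[
\lambda_{\min}(\tens V^*\tens V)\ge c\,L^d.
\]
The proof builds a minorant $\psi\le\one_{[-L/2,L/2]^2}$ whose Fourier transform is supported in $[-q/2,q/2]^2$ and has $\hat\psi(0)\gtrsim L^2$; tensor-product Selberg minorants satisfy this once $qL$ exceeds a numerical constant. Taking $d=2$ and $L=L_A$, the hypothesis $\Delta\ge C_{\rm I}/(\min_\ell\Delta q_\ell\,L_A)$ supplies $q\ge C/L_A$ after enlarging $C_{\rm I}$ to absorb the factor $\sqrt2$. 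This gives $\lambda_{\min}(\tens V_{J_A}^*\tens V_{J_A})\ge cL_A^2$, and likewise for $B$, since the hypothesis uses the maximum of $L_A^{-1}$ and $L_B^{-1}$.

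\textbf{Conclusion.} Combining the two steps, the ratio $|A|/\lambda_{\min}(\tens V_A^*\tens V_A)$ is at most $|A|/(cL_A^2)$, and similarly for $B$. Taking the maximum and setting $C_{\rm I}\ge\max\{C,1/c\}$ yields the stated bound on $\mu_1$.

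\textbf{Main obstacle.} The delicate step is the square-grid bound. The difficulty is not a single-pair separation but the two-dimensional lower bound with constants that do not depend on $s$. One must check that the cited multivariate inequality is stated for the $\ell^\infty$ (or $\ell^2$) wrap-around metric matching $d_{\rm per}$ after anisotropic scaling by $\Delta q_\ell$. The condition $L\ge2$ is used so that $L$ and $L-1$ are comparable, which lets the constants be absorbed into $C_{\rm I}$. If only a separate-coordinate (Cartesian product) version is available, I would instead construct the tensor minorant directly. The product of two one-dimensional Selberg minorants is not itself a minorant, so the standard correction $\psi=\psi_1\phi_2+\phi_1\psi_2-\phi_1\phi_2$ with majorants $\phi_\ell$ is needed, and its mean stays $\gtrsim L^2$ when $qL$ is large.
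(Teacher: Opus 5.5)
Your proposal is correct and follows essentially the same route as the paper. Gram-matrix monotonicity under added rows, together with translation invariance, reduces the problem to a square subgrid of the Cartesian core; $\Delta$-separation in $d_{\rm per}$ becomes $\ell^\infty$ torus separation with the factor $\min\{\Delta q_1,\Delta q_2\}/\sqrt2$; and the multivariate Vandermonde bound of \cite[Theorem~4.2]{KunisNagelStrotmann2022} finishes the argument. The paper additionally takes the even side $n_C=2\lfloor L_C/2\rfloor\ge L_C/2$ to match the cited theorem and treats $s=1$ separately; your constant-absorption remarks already cover both points.
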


Two deterministic quantities describe the pencil geometry. The balance factor is
\begin{equation}\label{eq:shape-factor}
 c_{A,B}=\max\left\{\frac{N}{|A|},\frac{N}{|B|}\right\}
 =\frac{N}{\min\{|A|,|B|\}}.
\end{equation}
For $k\in S$, let $\vect E_k$ denote the $k$th coordinate vector in $\mathbb C^S$, and set $w(k)=\#\{(\alpha,\beta)\in A\times B:\alpha+\beta=k\}$ and $\tens G_k=\calH_{A,B}(\vect E_k)/\sqrt{w(k)}$. For $\tens M\in\mathbb C^{|A|\times|B|}$, define
\begin{align}\label{eq:fiber-energy-norms}
 \|\tens M\|_{\calG,2}^2
 &=\sum_{k\in S}\frac{|\inner{\tens G_k}{\tens M}|^2}{w(k)},\\
 \|\tens M\|_{\infty,2}^2
 &=\max\left\{\max_{\alpha\in A}\|\vect e_\alpha^*\tens M\|_2^2,
 \max_{\beta\in B}\|\tens M\vect e_\beta\|_2^2\right\}.
\end{align}
The quantity $\|\tens M\|_{\calG,2}$ is a seminorm on the ambient matrix space. The fiber parameter is
\begin{equation}\label{eq:fiber-parameter}
 \Gamma_{A,B}=\sup_{\tens M\ne0}
 \frac{\|\tens M\|_{\calG,2}^2}{\|\tens M\|_{\infty,2}^2}.
\end{equation}
Since $\|\calH_{A,B}(\vect E_k)\|_{\calG,2} =\|\calH_{A,B}(\vect E_k)\|_{\infty,2}=1$ for every $k\in S$, we have $\Gamma_{A,B}\ge1$. It measures the accumulation of matrix energy along low-multiplicity fibers. Verifiable geometric conditions for the bound of $\Gamma_{A,B}$ and explicit aperture constructions are given in Section~\ref{sec:verifiableconditions}. In particular, for size-balanced pencils admitting a decomposition into a fixed number of uniformly scale-comparable rectangles, one has $c_{A,B}=O(1)$ and $\Gamma_{A,B}=O(\log^2 N)$.

\subsection{Completion and localization guarantees}
\label{sec:completion-localization-guarantees}

With the sampling model and the source and geometry parameters in place, we first establish exact and stable completion and then propagate the completed-Hankel error to the MUSIC locations.

\begin{theorem}[Exact completion]\label{thm:aperture-pencil-global}
Let $\vect x$ be the $s$-term data in \eqref{eq:X-expsum-new}, with nonzero amplitudes, and suppose that Assumption~\ref{ass:sampling-model} holds and $\mu_1$ is defined. There is a numerical constant $C_2>0$ such that, if
\begin{equation}\label{eq:aperture-global-sample}
 p\ge\frac{C_2\Gamma_{A,B}\mu_1c_{A,B}s\log^2N}{N},
\end{equation}
then $\vect x|_S$ is the unique minimizer of \eqref{eq:admissible-emac-program-new} with probability at least $1-N^{-10}$.
\end{theorem}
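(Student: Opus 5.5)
We follow the dual-certificate and golfing-scheme route of Chen--Chi for enhanced matrix completion, adapted to the weighted fiber basis $\{\tens G_k\}_{k\in S}$ of a general pencil. Let $\tens M_\star=\tens U\tens\Sigma\tens V^*$ be the compact SVD, which has rank $s$ by \Cref{prop:pencil-rank-physical}. Let $T$ be the usual tangent space, and let $\calP_T$ be its projector. Work in the space $\mathcal{S}=\range\calH_{A,B}=\operatorname{span}\{\tens G_k\}$, where the $\tens G_k$ are orthonormal. Set $\calA_\Omega(\tens M)=\sum_{k\in\Omega}\tens G_k\inner{\tens G_k}{\tens M}$ and $\calA=\sum_{k\in S}\tens G_k\inner{\tens G_k}{\cdot}$.

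The first step is a standard deterministic optimality lemma. Suppose $\|p^{-1}\calP_T\calA_\Omega\calP_T-\calP_T\calA\calP_T\|\le 1/2$ (local injectivity on $T\cap\mathcal S$) and $\|\calA_\Omega\|\le 1$. Suppose also there is a certificate $\tens W$ with $(\calA-\calA_\Omega)\tens W=0$, $\|\calP_T(\tens W)-\tens U\tens V^*\|_F\le 1/(2N^2)$ and $\|\calP_{T^\perp}\tens W\|_2\le1/2$. Then $\vect x|_S$ is the unique minimizer. Here $\tens W$ may have a component in $\mathcal S^\perp$, and this is handled exactly as in the EMaC proof by letting $\tens W$ be chosen modulo $\mathcal S^\perp$.

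The second step gives the incoherence estimates that feed the concentration bounds, using $\mu_1$. Since $\tens U$ spans $\range\tens V_A$, we have $\|\tens U^*\vect e_\alpha\|_2^2\le\|\vect e_\alpha^*\tens V_A\|^2/\lambda_{\min}(\tens V_A^*\tens V_A)\le s\mu_1/|A|$, and similarly $\|\tens V^*\vect e_\beta\|_2^2\le s\mu_1/|B|$. Hence $\|\calP_T\tens G_k\|_F^2\le 2s\mu_1/\min\{|A|,|B|\}=2s\mu_1c_{A,B}/N$. In the rectangular theory the fiber-weighted quantities, such as $\sum_k|\inner{\tens G_k}{\tens M}|^2/w(k)$ for $\tens M\in T$, were bounded by explicit counting. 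Here they are controlled by $\Gamma_{A,B}\|\tens M\|_{\infty,2}^2$ through the definition \eqref{eq:fiber-parameter}. For $\tens M=\tens U\tens V^*$ or golfing iterates, $\|\tens M\|_{\infty,2}^2\lesssim s\mu_1/\min\{|A|,|B|\}$ times the appropriate scale. Matrix Bernstein then gives local injectivity with probability $1-N^{-11}$ once $p\gtrsim s\mu_1c_{A,B}\log N/N$.

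The third step builds $\tens W$ by golfing. Split $\Omega$ into $j_0\asymp\log N$ independent batches with rate $q$, where $1-p=(1-q)^{j_0}$. Set $\tens F_0=\tens U\tens V^*$, $\tens F_j=\calP_T(\calA-q^{-1}\calA_{\Omega_j})\calP_T\tens F_{j-1}$ and $\tens W=\sum_j q^{-1}\calA_{\Omega_j}\tens F_{j-1}$, up to the $\mathcal S^\perp$ correction. Three contraction facts are needed for each iterate. In Frobenius norm the iterates contract by $1/2$, from injectivity. The mixed norms $\|\cdot\|_{\calG,2}$ and $\|\cdot\|_{\infty,2}$ also contract by $1/2$, which we establish via Bernstein bounds involving $\Gamma_{A,B}\mu_1c_{A,B}s\log N/(Nq)$. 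Finally, the spectral norm satisfies $\|(q^{-1}\calA_{\Omega_j}-\calA)\tens F\|_2\lesssim\sqrt{N\log N/q}\cdot(\|\tens F\|_{\calG,2}+\|\tens F\|_{\infty,2})/\sqrt N$. Summing the geometric series gives $\|\calP_{T^\perp}\tens W\|_2\le1/2$ when $q\gtrsim\Gamma_{A,B}\mu_1c_{A,B}s\log N/N$. Multiplying by $j_0$ then yields \eqref{eq:aperture-global-sample}, and a union bound gives failure probability $N^{-10}$.

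The main obstacle is the mixed-norm contraction in the third step. In the rectangular case it rests on explicit counting of $w(k)$, which is unavailable for general pencils. We will try to prove $\|(\calA-q^{-1}\calP_T\calA_{\Omega})\tens F\|_{\calG,2}$ and $\|\cdot\|_{\infty,2}\le\frac12(\|\tens F\|_{\calG,2}+\|\tens F\|_{\infty,2})$. The plan is to expand in the $\tens G_k$ and use the incoherence bound on $\calP_T\tens G_k$. Sums of the form $\sum_k w(k)^{-1}|\cdot|^2$ are closed by passing through $\Gamma_{A,B}$, with $\Gamma_{A,B}$ playing the role that $\log N$-type fiber counting plays in Chen--Chi.
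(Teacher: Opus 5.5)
You have the paper's architecture: the same deterministic optimality argument (local injectivity $\|\calP_T(p^{-1}\calA_\Omega-\calA)\calP_T\|\le\frac12$ plus a certificate in $\range(\calA_\Omega)+\range(\calA^\perp)$), the same $\mu_1$-incoherence bounds, golfing over $\asymp\log N$ batches, and the same final sample count. The gap is the step you defer as the main obstacle. With the two norms you propose to track, it does not close at the claimed rate.

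The uniform term in matrix Bernstein for $\sum_a(1-\zeta_a/q)\inner{\tens G_a}{\tens F}\tens G_a$ is $q^{-1}\max_a|\inner{\tens G_a}{\tens F}|\,\|\tens G_a\|_2=q^{-1}\|\tens F\|_{\calG,\infty}$. Here $\|\tens F\|_{\calG,\infty}=\max_k|\inner{\tens G_k}{\tens F}|/\sqrt{w(k)}$ is a fiber-averaged max-entry norm. So Bernstein gives the leakage bound $\sqrt{\log N/q}\,\|\tens F\|_{\calG,2}+(\log N/q)\|\tens F\|_{\calG,\infty}$, not your bound with $\sqrt{\log N/q}\,\|\tens F\|_{\infty,2}$. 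The same quantity also governs the uniform term in the Bernstein proofs of your $\calG,2$- and $\infty,2$-contractions.

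Neither of your norms controls it at the needed scale. For $\tens F_0=\tens U\tens V^*$ one has $\|\tens F_0\|_{\calG,\infty}\le\mu_1c_{A,B}s/N$. By contrast, $\|\tens F_0\|_{\infty,2}\ge\sqrt{c_{A,B}s/N}$, because the squared row norms of $\tens U$ sum to $s$. Replacing $\|\cdot\|_{\calG,\infty}$ by $\|\cdot\|_{\infty,2}$ therefore forces $q\gtrsim\log N\sqrt{c_{A,B}s/N}$, i.e.\ on the order of $\sqrt N$ samples rather than a count linear in $s$. The available bound $\|\tens F_0\|_{\calG,2}^2\le\mu_1c_{A,B}\Gamma_{A,B}s/N$ is no better. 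Using the tangent-space structure to bound entries of $\tens F\in T$ by $\|\tens F\|_{\infty,2}$ does not rescue this in general either: in the natural estimate, the cross term $\tens U\tens U^*\tens F\tens V\tens V^*$ loses a further factor up to $\sqrt{\mu_1 s}$.

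The missing idea is to carry the fiber-max norm through the golfing iteration. The paper contracts the combined seminorm $\Phi(\tens M)=\|\tens M\|_{\calG,2}+\sqrt{N/(\mu_1c_{A,B}\Gamma_{A,B}s)}\,\|\tens M\|_{\calG,\infty}$, in two parts.

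\begin{itemize}
\item The $\calG,\infty$ part uses scalar Bernstein with a union bound over $b\in S$. This needs the off-diagonal kernel estimate $|\inner{\tens G_b}{\calP_T\tens G_a}|\le 3\mu_1c_{A,B}sN^{-1}\sqrt{w(b)/w(a)}$ of Lemma~\ref{lem:kernel-bounds}. Your incoherence step only bounds $\|\calP_T\tens G_k\|_F$, so it does not supply this.
\item The $\calG,2$ part uses vector Bernstein. Here $\Gamma_{A,B}$ is applied only to the fixed matrices $\calP_T\tens P_a$ and $\tens U\tens V^*$, never to the random iterates. This gives $\|\calP_T\tens G_a\|_{\calG,2}^2\le10\mu_1c_{A,B}\Gamma_{A,B}s/(Nw(a))$ (Lemma~\ref{lem:initial-structural-norms}).
\end{itemize}

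Then $\Phi(\tens U\tens V^*)\le2\sqrt{\mu_1c_{A,B}\Gamma_{A,B}s/N}$ and $\|\tens F\|_{\calG,\infty}\le\sqrt{\mu_1c_{A,B}\Gamma_{A,B}s/N}\,\Phi(\tens F)$. The correct leakage bounds therefore sum to $\|\calP_{T^\perp}\tens W\|_2\le\frac12$ at a per-batch rate $q\gtrsim\mu_1c_{A,B}\Gamma_{A,B}s\log N/N$. Multiplying by the number of batches gives \eqref{eq:aperture-global-sample}.
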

Thus, a sufficient number of observed coefficients is of order $\mathcal{O}(\Gamma_{A,B}\mu_1c_{A,B}s\log^2N)$ with high probability\cite{CaiCaiYou2023}. For the decomposable pencils considered below in \Cref{sec:verifiableconditions}, $\Gamma_{A,B}=O(\log^2N)$. The same sampling and structural conditions also yield stability under bounded measurement noise.

\begin{theorem}[Stable completion]\label{thm:noisy-aperture-completion}
Under the hypotheses of Theorem~\ref{thm:aperture-pencil-global}, assume \eqref{eq:aperture-global-sample} and let $\widehat{\vect x}$ be a minimizer of \eqref{eq:physical-noisy-emac-program-new}. There is a numerical constant $C_3>0$ such that, with probability at least $1-N^{-10}$,
\begin{equation}\label{eq:noisy-hankel-bound}
 \|\calH_{A,B}(\widehat{\vect x}-\vect x)\|_2
 \le\frac{C_3}{p}
 \sqrt{s\min\{|A|,|B|\}\log N}\,\eps.
\end{equation}
\end{theorem}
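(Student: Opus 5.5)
The plan is to reuse the dual certificate from the proof of Theorem~\ref{thm:aperture-pencil-global} and run the standard noisy nuclear-norm argument, in the style of Cand\`es--Plan and its Hankel version in \cite{ChenChi2014,CaiCaiYou2023}, on the weighted coefficient space. Write $\vect h=\widehat{\vect x}-\vect x$, $\tens H=\calH_{A,B}(\vect h)$, and split $\vect h=\calP_\Omega\vect h+\calP_{\Omega^c}\vect h$. Feasibility of both $\vect x$ and $\widehat{\vect x}$ gives $\|\calP_\Omega\vect h\|_2\le 2\eps$. Each coefficient appears in at most $\min\{|A|,|B|\}$ entries of the lifted matrix. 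Hence $\|\calH_{A,B}(\calP_\Omega\vect h)\|_F\le\sqrt{\min\{|A|,|B|\}}\,2\eps$, and the observed part is harmless. The work is in bounding $\tens H_c:=\calH_{A,B}(\calP_{\Omega^c}\vect h)$.

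\textbf{Step 1: certificate properties.} Let $\tens M_\star=\tens U\tens\Sigma\tens V^*$ and let $T$ be its tangent space. On the event of Theorem~\ref{thm:aperture-pencil-global}, holding with probability $1-N^{-10}$ under \eqref{eq:aperture-global-sample}, I take the following properties as established in that proof. First, local injectivity on $T$: $\|\calP_T\calG\calP_T-p^{-1}\calP_T\calG\calR_\Omega\calG\calP_T\|\le 1/2$, where $\calG$ is the projector onto Hankel-structured matrices. Second, there is a certificate $\tens W$ supported on $\calH_{A,B}(\Omega)$ (modulo $\calG^\perp$) with $\|\calP_T(\tens W)-\tens U\tens V^*\|_F\le 1/(2N^2)$ and $\|\calP_{T^\perp}\tens W\|_2\le 1/2$. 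Third, the operator bound $\|\calR_\Omega\|\le$ const$\cdot\log N$-type bounds, with $\|p^{-1}\calR_\Omega\|\le 1/p$ trivially.

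\textbf{Step 2: cone inequality.} Optimality gives $\|\tens M_\star+\tens H\|_*\le\|\tens M_\star\|_*$. Apply the subgradient $\tens U\tens V^*+\tens F$, with $\tens F\in T^\perp$, $\|\tens F\|_2\le1$, and $\langle\tens F,\tens H_c\rangle=\|\calP_{T^\perp}\tens H_c\|_*$. The certificate has zero inner product with $\tens H_c$, because $\tens H_c$ is supported on unobserved fibers and is Hankel. Using this, one obtains
\[
\tfrac12\|\calP_{T^\perp}\tens H_c\|_*\le \tfrac{1}{2N^2}\|\calP_T\tens H_c\|_F+\text{(terms from }\calH_{A,B}(\calP_\Omega\vect h)).
\]
The extra terms are bounded by $\sqrt{s}\,\|\calH_{A,B}(\calP_\Omega\vect h)\|_F$, since $\|\tens U\tens V^*\|_F=\sqrt s$.

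\textbf{Step 3: control $\calP_T\tens H_c$ by injectivity.} Since $\calR_\Omega\tens H_c=0$, local injectivity gives $\|\calP_T\tens H_c\|_F^2\lesssim p^{-1}\|\calR_\Omega\calP_{T^\perp}\tens H_c\|_F\,\|\calP_T\tens H_c\|_F$. Therefore $\|\calP_T\tens H_c\|_F\lesssim p^{-1}\|\calP_{T^\perp}\tens H_c\|_F\le p^{-1}\|\calP_{T^\perp}\tens H_c\|_*$. Combining with Step 2, the $N^{-2}/p\le N^{-1}$ coupling is absorbed, so $\|\tens H_c\|_F\lesssim p^{-1}\sqrt s\,\|\calH_{A,B}(\calP_\Omega\vect h)\|_F$. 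The residual $\log N$ factor enters through the injectivity/weighted-norm constant at this point. Adding the observed part and using $\|\cdot\|_2\le\|\cdot\|_F$ yields \eqref{eq:noisy-hankel-bound}.

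\textbf{Main obstacle.} The hardest step is Step 3: making the injectivity estimate and the $N^{-2}$ certificate accuracy compatible with the $1/p$ blow-up on general pencils with nonuniform fiber weights $w(k)$. If the Frobenius route loses too much, I would first try working in the weighted coefficient norm $\sum_k w(k)|h_k|^2=\|\tens H\|_F^2$ throughout. There I would bound $\|\calR_\Omega\|$ via the $\|\cdot\|_{\calG,2}$ and $\|\cdot\|_{\infty,2}$ norms from \eqref{eq:fiber-parameter}, which is where $\sqrt{\log N}$ should appear.
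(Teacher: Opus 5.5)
Your proposal has a genuine gap in Step~2, and the missing ingredient is a certificate property you did not list.

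\textbf{Where Step~2 fails.} Split $\tens H=\tens H_o+\tens H_c$ with $\tens H_o=\calH_{A,B}(\calP_\Omega\vect h)$, and choose $\tens F$ to match $\calP_{T^\perp}\tens H_c$. The subgradient inequality for $\tens H$ then produces two terms in $\tens H_o$:
\begin{itemize}
\item $\operatorname{Re}\langle\tens U\tens V^*,\tens H_o\rangle$, which is indeed at most $\sqrt s\,\|\tens H_o\|_F$;
\item $\operatorname{Re}\langle\tens F,\tens H_o\rangle$, which you did not account for.
\end{itemize}
If you instead pass from $\tens H$ to $\tens H_c$ by the triangle inequality, you get $\|\tens H_o\|_*$. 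Either way the observed error enters through a nuclear norm, and $\tens H_o$ has no low-rank structure. A single observed fiber already gives $\|\tens P_k\|_*=w(k)=\sqrt{w(k)}\,\|\tens P_k\|_F$. In general you only have
\[
\|\tens H_o\|_*\le\sqrt{\min\{|A|,|B|\}}\,\|\tens H_o\|_F\le2\sqrt{\min\{|A|,|B|\}\,w_{\max}}\,\eps,\qquad w_{\max}=\max_k w(k).
\]
So the Cand\`es--Plan route yields $\|\tens H\|_2\lesssim p^{-1}\sqrt{\min\{|A|,|B|\}\,w_{\max}}\,\eps$. This misses \eqref{eq:noisy-hankel-bound} by the factor $\sqrt{w_{\max}/(s\log N)}$, which is of order $\sqrt{\min\{|A|,|B|\}/(s\log N)}$ when, e.g., $A=B$ is a rectangle.

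A symptom of the gap: your intermediate claim $\|\tens H_c\|_F\lesssim p^{-1}\sqrt s\,\|\tens H_o\|_F$ would beat the theorem. The $\log N$ you then attribute to the injectivity constant has no source there, since that constant is the numerical $\tfrac12$.

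\textbf{The missing certificate property.} Lemma~\ref{lem:completion-certificate-event} also states that $\tens W$ restricted to the observed fibers is small:
\[
\|\calA_\Omega\tens W\|_F\le C_{\rm W}\sqrt{s\log(N/\delta)/p}.
\]
This comes from the golfing construction, since $\calA_\Omega\tens W=\sum_j p_{\rm b}^{-1}\calA_{\Omega_j}\tens Q_{j-1}$ with $\|\tens Q_{j-1}\|_F\le2^{1-j}\sqrt s$ and $p_{\rm b}\ge p/J$, where $J\lesssim\log N$. This $\sqrt J$ is the actual origin of the $\sqrt{\log N}$ in \eqref{eq:noisy-hankel-bound}.

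\textbf{How the argument goes with it.} Do not split $\vect h$.
\begin{itemize}
\item Take $\tens F$ matched to $\calP_{T^\perp}\tens E$ for the full error $\tens E=\calH_{A,B}(\vect h)$.
\item Use $\langle\tens W,\tens E\rangle=\langle\calA_\Omega\tens W,\calA_\Omega\tens E\rangle$. Since $\|\calA_\Omega\tens E\|_F^2=\sum_{k\in\Omega}w(k)|h_k|^2\le4w_{\max}\eps^2$, this is at most $2C_{\rm W}\sqrt{11\,s\,w_{\max}\log N/p}\,\eps$.
\item With the accuracy $\|\calP_T(\tens W-\tens Q_\star)\|_F\le1/(8N)$, you obtain $\|\calP_{T^\perp}\tens E\|_*\lesssim\sqrt{s\,w_{\max}\log N/p}\,\eps+(4N)^{-1}\|\calP_T\tens E\|_F$.
\end{itemize}

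\textbf{The tangent part.} Your Step~3 inequality carries a full factor $p^{-1}$, so combined with the bound above it would give $p^{-3/2}$. The paper avoids this as follows.
\begin{itemize}
\item It uses $\calK=p^{-1}\calA_\Omega+\calA^\perp\preceq p^{-1}\calI$, $\calK\tens E=p^{-1}\calA_\Omega\tens E$, and the tangent event.
\item This gives $\|\calP_T\tens E\|_F\le\sqrt{2/p}\,\bigl(\|\calA_\Omega\tens E\|_F+\|\calP_{T^\perp}\tens E\|_*\bigr)$, so the two half-powers of $p$ combine to the stated $p^{-1}$.
\item It absorbs the $(4N)^{-1}$ term using $p\ge N^{-1}$.
\item It concludes with $\|\tens E\|_2\le\|\calP_T\tens E\|_F+\|\calP_{T^\perp}\tens E\|_*$ and $w_{\max}\le\min\{|A|,|B|\}$.
\end{itemize}
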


The spectral-norm estimate in \eqref{eq:noisy-hankel-bound} directly controls the perturbation of the left and right signal subspaces used by MUSIC. Let $\calR$ denote the exact counterpart of \eqref{eq:music-residual}, obtained by replacing the estimated signal subspaces with $\range(\tens V_A)$ and $\range(\overline{\tens V_B})$. By \eqref{eq:pencil-factor-new}, $\calR(\vect t_j)=0$ at every true location. For $\calT={\vect t_1,\ldots,\vect t_s}\subset\mathcal F$, assume that
\begin{equation}\label{eq:music-resolution-bound}
\calR(\vect t)\ge c_{\rm M}
\min{\Delta/4,d_{\rm per}(\vect t,\calT)},
\qquad \vect t\in\mathcal F,
\end{equation}
for some $c_{\rm M}>0$. This condition ensures isolated MUSIC zeros and linear growth of the residual near the true locations. Since \eqref{eq:music-resolution-bound} remains valid after decreasing $c_{\rm M}$, below $c_{\rm M}$ denotes a sufficiently small admissible choice. The following proposition gives a verifiable sufficient condition for \eqref{eq:music-resolution-bound}.

\begin{proposition}\label{prop:music-resolution}
Suppose that $A$ and $B$ contain Cartesian cores as in Proposition~\ref{prop:separationcondition}, with $L_A,L_B\ge2$, and that $\Delta$ satisfies the separation inequality there. Suppose additionally that either $A$ or $B$ contains a translate of $\{0,\ldots,s\}^2$. Then \eqref{eq:music-resolution-bound} holds uniformly over all $\Delta$-separated sets $\calT\subset\mathcal F$ of cardinality $s$, with $c_{\rm M}>0$ depending only on $A,B,\mathcal F,s,\Delta,\Delta q_1$, and $\Delta q_2$.
\end{proposition}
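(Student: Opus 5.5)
Without loss of generality let $A$ be the set containing a translate $\alpha_0+\{0,\ldots,s\}^2$; the case of $B$ is symmetric after complex conjugation. Since $\calR(\vect t)^2$ is the sum of two nonnegative terms, it suffices to show
\[
 \frac{\|(\tens I-\tens P_A)\vect v_A(\vect t)\|_2^2}{|A|}\ge c_{\rm M}^2\min\{\Delta/4,d_{\rm per}(\vect t,\calT)\}^2,
\]
where $\tens P_A$ is the orthogonal projector onto $\range(\tens V_A)$. Restricting to the rows in the translate only decreases the norm, up to a controlled factor. Indeed, $\|(\tens I-\tens P_A)\vect v\|_2=\min_{\vect a}\|\vect v-\tens V_A\vect a\|_2\ge\min_{\vect a}\|\vect v|_{J}-\tens V_{J}\vect a\|_2$, with $J=\alpha_0+\{0,\ldots,s\}^2$ and $\tens V_J$ the corresponding rows. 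After factoring out the unimodular translate phase $z^{\alpha_0}$, the problem reduces to a lower bound on the distance from $\vect v_J(\vect t)$ to $\range(\tens V_J)$. Note that we may not use $\lambda_{\min}$ of $\tens V_J$, since $J$ is small. Instead, the bound will come from an explicit annihilating vector.

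\textbf{Step 1 (algebraic lower bound).} The points $\vect t_1,\ldots,\vect t_s$ are distinct modulo the periods, so the nodes $(z_{1,j},z_{2,j})$ are distinct on $\mathbb T^2$. We build a polynomial $P(u_1,u_2)=\sum_{\gamma\in\{0,\ldots,s\}^2}p_\gamma u_1^{\gamma_1}u_2^{\gamma_2}$ that vanishes at all $s$ nodes and is nonzero at $\vect z(\vect t)$. A concrete choice, for each $j$, is the factor $(u_1-z_{1,j})$ if $|z_1(\vect t)-z_{1,j}|$ dominates, and otherwise $(u_2-z_{2,j})$. The product has degree at most $s$ in each variable, hence its coefficient vector $\vect p$ is supported in $\{0,\ldots,s\}^2$. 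We have $\vect p^T\tens V_J=0$, so
\[
 \|(\tens I-\tens P_J)\vect v_J(\vect t)\|_2\ge \frac{|\vect p^T\vect v_J(\vect t)|}{\|\vect p\|_2}=\frac{|P(\vect z(\vect t))|}{\|\vect p\|_2}.
\]
Each factor has $|z_\ell(\vect t)-z_{\ell,j}|\ge c\,d_{\rm per}(\vect t,\vect t_j)\min\{\Delta q_1,\Delta q_2\}/\sqrt2$, by the chosen dominance and the bound $|e^{\ii\theta}-1|\ge 2|\theta|/\pi$ on the principal branch. Moreover, $\|\vect p\|_2\le 2^s$, since each factor has coefficient norm $\sqrt2$ and multiplication of polynomials is submultiplicative in $\ell^1$.

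\textbf{Step 2 (combining the factors).} If $\vect t$ is within $\Delta/4$ of some $\vect t_{j_0}$, then by $\Delta$-separation $d_{\rm per}(\vect t,\vect t_j)\ge 3\Delta/4$ for $j\ne j_0$. In that case the product is bounded below by $c^s(\Delta/4)^{s-1}\,d_{\rm per}(\vect t,\calT)$, times powers of $\min\Delta q_\ell$. Otherwise every factor is at least of order $\Delta/4$. Since the side lengths of $\mathcal F$ are below the periods, $d_{\rm per}$ is at most a constant depending on $\mathcal F$, so these lower bounds turn into $c\min\{\Delta/4,d_{\rm per}\}$. Dividing by $|A|^{1/2}$ gives $c_{\rm M}$ depending only on $|A|,\mathcal F,s,\Delta,\Delta q_\ell$, uniformly over $\calT$.

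\textbf{Main obstacle.} The delicate step is the dominance choice together with the branch of the phase. The coordinatewise periodic distance must be compared to the chordal distance $|z_\ell-z_{\ell,j}|$, and this needs care near half-periods. Restricting to $\mathcal F$, whose sides are smaller than the periods, should handle this, since then all phase differences lie in $(-2\pi,2\pi)$ and the chordal distance is comparable to the periodic one.

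The hypotheses of Proposition~\ref{prop:separationcondition} would be used only to guarantee that $\tens V_A$ and $\tens V_B$ have full column rank. This ensures that the exact residual $\calR$ is defined via rank-$s$ subspaces, and that the $B$-term is well defined.
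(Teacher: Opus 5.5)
Your proof is correct, and it takes a genuinely different route from the paper's.

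\textbf{The paper's route.} The paper argues qualitatively, in three steps. First, Lagrange-type products of affine polynomials show that $[\tens V_A\ \vect v_A(\vect t)]$ has rank $s+1$ for $\vect t\notin\calT$, so $\calR>0$ there. Second, another polynomial argument shows that $[\tens V_A\ \partial_{t_1}\vect v_A(\vect t_j)\ \partial_{t_2}\vect v_A(\vect t_j)]$ has rank $s+2$; via Taylor's theorem this gives linear growth of $\calR$ near each $\vect t_j$. Third, a compactness argument over the closed set of $\Delta$-separated configurations makes the radii and constants uniform. This last step is where the uniform full column rank from Proposition~\ref{prop:separationcondition} enters, since it gives continuity of the projectors.

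\textbf{Your route and what each buys.} Your single coordinate-aligned annihilating polynomial does all three jobs at once. The factor attached to the nearest source supplies the linear vanishing, and $\Delta$-separation bounds the other factors below by $3\Delta/4$. Nothing needs to be made uniform, because the bound is explicit: $\calR(\vect t)\ge c_{\rm M}\min\{\Delta/4,d_{\rm per}(\vect t,\calT)\}$ with $c_{\rm M}=|A|^{-1/2}2^{-s}\kappa^s(\Delta/4)^{s-1}$ and $\kappa=\sqrt2\min\{\Delta q_1,\Delta q_2\}/\pi$. This constant is independent of $\mathcal F$ and $B$. Your bound also holds without the Cartesian cores or the separation inequality of Proposition~\ref{prop:separationcondition}; those matter only for $\mu_1$ elsewhere. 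The price is an explicit exponential deterioration in $s$, which the paper's nonconstructive constant leaves hidden.

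\textbf{Minor points.} Two of the concerns you flag are unnecessary.
\begin{enumerate}
\item The chordal--periodic comparison needs no reference to $\mathcal F$ or to half-periods. We have $|z_\ell(\vect t)-z_{\ell,j}|=2\sin(\Delta q_\ell\delta_\ell/2)\ge 2\Delta q_\ell\delta_\ell/\pi$, where $\delta_\ell\in[0,\pi/\Delta q_\ell]$ is the wrapped coordinate distance.
\item In the far case the product is already at least $(\kappa\Delta/4)^s$. This dominates $\kappa^s(\Delta/4)^{s-1}\min\{\Delta/4,d_{\rm per}(\vect t,\calT)\}$, so no upper bound on $d_{\rm per}$ over $\mathcal F$ is needed.
\end{enumerate}
Also, $\|\vect p\|_2\le 2^s$ follows from the $\ell^1$ coefficient norm $2$ of each factor, not from its $\ell^2$ norm $\sqrt2$.
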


The next theorem converts a completed-Hankel perturbation into an off-grid localization error.

\begin{theorem}[Localization stability]\label{thm:completion-localization}
Suppose that $\mu_1$ is defined, the locations are $\Delta$-separated, and \eqref{eq:music-resolution-bound} holds. For a sufficiently small admissible choice of $c_{\rm M}$, there are numerical constants $C_4,C_5>0$ such that, if
\begin{equation}\label{eq:stablecondition}
 \Delta\ge C_4
 \frac{\mu_1\|\calH_{A,B}(\widehat{\vect x}-\vect x)\|_2}
 {c_{\rm M}\sqrt{|A||B|}\min_j|\widetilde c_j|},
\end{equation}
then the ideal continuous minima selected in Algorithm~\ref{alg:general-domain-music} can be labeled so that
\begin{equation}\label{eq:completion-location-final}
 \max_j d_{\rm per}(\widehat{\vect t}_j,\vect t_j)
 \le C_5
 \frac{\mu_1\|\calH_{A,B}(\widehat{\vect x}-\vect x)\|_2}
 {c_{\rm M}\sqrt{|A||B|}\min_j|\widetilde c_j|}.
\end{equation}
\end{theorem}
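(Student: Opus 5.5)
The plan is to prove the bound in three steps. First, apply a Wedin-type $\sin\Theta$ bound to the signal subspaces. Second, turn this into a uniform perturbation bound on the MUSIC residual. Third, use the resolution condition \eqref{eq:music-resolution-bound} to pin down where the estimated minima can lie.

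Write $\tens M_\star=\calH_{A,B}(\vect x)=\tens V_A\tens D\tens V_B^T$ and $\tens E=\calH_{A,B}(\widehat{\vect x}-\vect x)$. We first lower bound $\sigma_s(\tens M_\star)$. Since $\tens V_A$ and $\tens V_B$ have full column rank,
\[
\sigma_s(\tens M_\star)\ge \sigma_{\min}(\tens V_A)\,\min_j|\widetilde c_j|\,\sigma_{\min}(\tens V_B)\ge \frac{\sqrt{|A||B|}}{\mu_1}\min_j|\widetilde c_j|,
\]
using $\lambda_{\min}(\tens V_A^*\tens V_A)\ge |A|/\mu_1$ and the analogous bound for $\tens V_B$. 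Set $\delta:=\mu_1\|\tens E\|_2/(\sqrt{|A||B|}\min_j|\widetilde c_j|)$, so that $\|\tens E\|_2\le\delta\,\sigma_s(\tens M_\star)$.

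Condition \eqref{eq:stablecondition} says $\delta\le c_{\rm M}\Delta/C_4$. We will show that the argument really only needs $\delta$ small compared with $c_{\rm M}\Delta$. Because the admissible $c_{\rm M}$ may be decreased and $c_{\rm M}\Delta\lesssim 1$ (the residual is at most $\sqrt2$), we obtain $\delta\le 1/2$, say. Wedin's theorem, applied to the rank-$s$ truncated SVD with $\tens M_\star$ of rank exactly $s$, then gives
\[
\|\widehat{\tens U}\widehat{\tens U}^*-\tens U\tens U^*\|_2,\ \|\widehat{\tens V}\widehat{\tens V}^*-\tens V\tens V^*\|_2\le \frac{2\|\tens E\|_2}{\sigma_s(\tens M_\star)}\le 2\delta,
\]
where $\range\tens U=\range\tens V_A$ and $\range\tens V=\range\overline{\tens V_B}$.

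Next we bound the residual perturbation. Since $\|\vect v_A(\vect t)\|_2^2=|A|$ and $\|\vect v_B(\vect t)\|_2^2=|B|$, each normalized projection term changes by at most $2\delta$ in norm. The triangle inequality for the Euclidean combination in \eqref{eq:music-residual} then yields
\[
\sup_{\vect t\in\mathcal F}|\widehat\calR(\vect t)-\calR(\vect t)|\le 2\sqrt2\,\delta=:\eta .
\]

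Now we localize the minima. Fix $r:=2\eta/c_{\rm M}$, and choose $C_4$ large so that $r<\Delta/4$. Let $B_j$ denote the periodic ball of radius $\Delta/4$ around $\vect t_j$.
\begin{enumerate}
\item[(a)] At each true location, $\widehat\calR(\vect t_j)\le\eta$.
\item[(b)] Off $\bigcup_j B_j$, \eqref{eq:music-resolution-bound} gives $\widehat\calR\ge c_{\rm M}\Delta/4-\eta>2\eta$.
\item[(c)] Inside $B_j$ but at distance greater than $r$ from $\vect t_j$, we have $\widehat\calR>c_{\rm M}r-\eta=\eta$.
\end{enumerate}
Hence $\min_{B_j}\widehat\calR\le\eta$, and it is attained in the closed ball of radius $r$ about $\vect t_j$. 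That ball is interior to $B_j$, since the $\vect t_j$ lie in the interior of $\mathcal F$ and we shrink further if needed. So each $B_j$ contains a local minimum of value at most $\eta$. The balls are disjoint by $\Delta$-separation, which gives $s$ such minima.

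Every local minimum with value at most $\eta$ lies within distance $r$ of some $\vect t_j$, because the value exceeds $\eta$ everywhere else. Therefore the $s$ smallest local minima are exactly of this kind. The main obstacle is to ensure that each ball contributes exactly one selected minimum, rather than two minima from one ball with another ball contributing none. We handle this by noting that every ball contains a minimum of value at most $\eta$, and every minimum of value at most $\eta$ lies in some ball. If the number of minima is finite, we select the global minimizer of $\widehat\calR$ over each $B_j$. We then argue that any additional minima inside a ball do not displace this choice: either we interpret Algorithm~\ref{alg:general-domain-music} as choosing one representative per cluster, or we observe that all such minima lie within $r$ of $\vect t_j$ anyway. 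Labeling the selected minima by their balls gives
\[
d_{\rm per}(\widehat{\vect t}_j,\vect t_j)\le r=4\sqrt2\,\delta/c_{\rm M},
\]
which is \eqref{eq:completion-location-final} with $C_5=4\sqrt2$.
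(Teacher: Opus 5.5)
\textbf{Gap: selection of the $s$ smallest local minima.} Your first three steps are sound and match the paper: the lower bound $\sigma_s(\tens M_\star)\ge\sqrt{|A||B|}\min_j|\widetilde c_j|/\mu_1$, the Wedin bound $2\delta$ on both projector errors once $\delta\le1/2$, and $\sup_{\mathcal F}|\widehat{\calR}-\calR|\le2\sqrt2\,\delta$. Your items (a)--(c) reproduce the paper's final step. The gap is the one you flag yourself and then do not close.

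Algorithm~\ref{alg:general-domain-music} takes the $s$ smallest local minima of $\widehat{\calR}$, but your information about $\widehat{\calR}$ is purely sup-norm. It shows that each $B_j$ contains \emph{some} local minimum of value at most $\eta$, and that every such minimum lies within $r$ of \emph{some} $\vect t_j$. It cannot rule out two shallow local minima near $\vect t_1$ whose values are below that of the only minimum near $\vect t_2$. For $s=2$ both selected points would then lie in $B_1$, and no labeling gives \eqref{eq:completion-location-final}. Taking the global minimizer over each $B_j$, or one representative per cluster, changes the selection rule. A $C^0$ perturbation bound alone never controls how many critical points $\widehat{\calR}$ has inside the $r$-ball.

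\textbf{The missing second-order ingredient.} The paper works with the smooth function $\calQ=\calR^2$. Since $\calQ(\vect t_j)=0$ and $\nabla\calQ(\vect t_j)=0$, condition \eqref{eq:music-resolution-bound} gives $\calQ(\vect t_j+\theta\vect u)\ge c_{\rm M}^2\theta^2\|\vect u\|_2^2$ for small $\theta$; this uses that $\vect t_j$ is interior to $\mathcal F$. Taylor's formula then yields $\nabla^2\calQ(\vect t_j)\succeq2c_{\rm M}^2\tens{I}$. Hence $\calQ$ is strongly convex on small neighborhoods $\calU_j\subset B_j$, and $\gamma=\min_{\mathcal F\setminus\bigcup_j\calU_j}\calQ>0$.

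Because $\widehat{\calQ}$ depends continuously in $C^2(\mathcal F)$ on the two projectors, there is $\eta_{\rm M}>0$ with the following property: projector errors at most $\eta_{\rm M}$ keep $\widehat{\calQ}$ strongly convex on every $\overline{\calU_j}$ and give $\|\widehat{\calQ}-\calQ\|_\infty<\gamma/3$. Then each $\calU_j$ contains exactly one local minimum, with value below $\gamma/3$, while all other local minima have value at least $2\gamma/3$. These are therefore exactly the $s$ selected points.

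The threshold $\eta_{\rm M}$ depends on the configuration. The paper absorbs it by shrinking $c_{\rm M}$ until $c_{\rm M}\Delta\le\min\{1,\eta_{\rm M}\}$, so that \eqref{eq:stablecondition} forces the projector errors below $\eta_{\rm M}$. This, not merely $\delta\le1/2$, is what the ``sufficiently small admissible choice of $c_{\rm M}$'' is for. Once the labeling $\widehat{\vect t}_j\in\calU_j$ is secured this way, your estimate $d_{\rm per}(\widehat{\vect t}_j,\vect t_j)\le4\sqrt2\,\delta/c_{\rm M}$ goes through as written.
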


Combining the completion and localization perturbation bounds yields the following localization error bound.
\begin{corollary}\label{cor:end-to-end-localization}
Under the hypotheses of Theorems~\ref{thm:noisy-aperture-completion} and~\ref{thm:completion-localization}, if
\[
 \Delta\ge C_3C_4
 \frac{\mu_1\sqrt{s\log N}}
 {p c_{\rm M}\sqrt{\max\{|A|,|B|\}}\min_j|\widetilde c_j|}\,\eps,
\]
then, with probability at least $1-N^{-10}$,
\[
 \max_j d_{\rm per}(\widehat{\vect t}_j,\vect t_j)
 \le C_3C_5
 \frac{\mu_1\sqrt{s\log N}}
 {p c_{\rm M}\sqrt{\max\{|A|,|B|\}}\min_j|\widetilde c_j|}\,\eps.
\]
\end{corollary}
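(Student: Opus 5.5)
The plan is to chain Theorem~\ref{thm:noisy-aperture-completion} into Theorem~\ref{thm:completion-localization} on the high-probability event of the former. The only computation is to simplify the ratio of the completion bound to $\sqrt{|A||B|}$.

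\textbf{Step 1 (completion event).} Let $\mathcal E$ be the event of probability at least $1-N^{-10}$ on which \eqref{eq:noisy-hankel-bound} holds. On $\mathcal E$ we have
\[
 \frac{\|\calH_{A,B}(\widehat{\vect x}-\vect x)\|_2}{\sqrt{|A||B|}}
 \le \frac{C_3}{p}\sqrt{s\log N}\,\eps\,
 \frac{\sqrt{\min\{|A|,|B|\}}}{\sqrt{|A||B|}}
 =\frac{C_3\sqrt{s\log N}\,\eps}{p\sqrt{\max\{|A|,|B|\}}}.
\]
This uses the identity $\min\{a,b\}\max\{a,b\}=ab$.

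\textbf{Step 2 (verify \eqref{eq:stablecondition}).} Multiply the bound from Step~1 by $C_4\mu_1/(c_{\rm M}\min_j|\widetilde c_j|)$. The right-hand side of \eqref{eq:stablecondition} is then at most
\[
 C_3C_4\frac{\mu_1\sqrt{s\log N}}{pc_{\rm M}\sqrt{\max\{|A|,|B|\}}\min_j|\widetilde c_j|}\,\eps,
\]
which is at most $\Delta$ by the corollary's hypothesis. So \eqref{eq:stablecondition} holds on $\mathcal E$.

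\textbf{Step 3 (localization).} The remaining hypotheses of Theorem~\ref{thm:completion-localization} are assumed deterministically: $\mu_1$ is defined, the locations are $\Delta$-separated, and \eqref{eq:music-resolution-bound} holds with the same admissible $c_{\rm M}$. That theorem therefore applies on $\mathcal E$. Inserting the bound from Step~1 into \eqref{eq:completion-location-final} gives the stated estimate with constant $C_3C_5$.

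The one point needing care is consistency of $c_{\rm M}$. The ``sufficiently small admissible choice'' in Theorem~\ref{thm:completion-localization} must be the same $c_{\rm M}$ that appears in the corollary's hypothesis. Decreasing $c_{\rm M}$ only strengthens that hypothesis, so this causes no difficulty. There is no other obstacle: the argument is deterministic once we restrict to $\mathcal E$, so the probability stays $1-N^{-10}$.
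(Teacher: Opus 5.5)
Your proposal is correct and is exactly the argument the paper intends. You restrict to the probability-$(1-N^{-10})$ event of Theorem~\ref{thm:noisy-aperture-completion}, use $\sqrt{\min\{|A|,|B|\}}/\sqrt{|A||B|}=1/\sqrt{\max\{|A|,|B|\}}$ to see that the corollary's hypothesis implies \eqref{eq:stablecondition}, and then apply Theorem~\ref{thm:completion-localization} deterministically; the paper gives no proof beyond the remark that the two bounds combine, and your handling of $c_{\rm M}$ matches how it is fixed in the hypotheses of Theorem~\ref{thm:completion-localization}.
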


\subsection{Verifiable geometry and pencil design}
\label{sec:verifiableconditions}

The preceding results isolate the source-dependent quantity $\mu_1$ from the deterministic pencil factors $c_{A,B}$ and $\Gamma_{A,B}$. Proposition~\ref{prop:separationcondition} controls $\mu_1$ from physical separation; the following decomposition bounds $\Gamma_{A,B}$ using only the finite index sets.
\begin{proposition}\label{prop:rectangular-decomposition}
Suppose that $A=\mathop{\dot\bigcup}_{u=1}^{R_A}A_u$ and $B=\mathop{\dot\bigcup}_{v=1}^{R_B}B_v$, where $A_u=I_{u,1}\times I_{u,2}$ and $B_v=J_{v,1}\times J_{v,2}$ are Cartesian rectangles of integer intervals. Define
\[
 c_{u,v}=\max_{i=1,2}\max\left\{
 \frac{|I_{u,i}+J_{v,i}|}{|I_{u,i}|},
 \frac{|I_{u,i}+J_{v,i}|}{|J_{v,i}|}\right\}.
\]
Then
\[
 \Gamma_{A,B}\le C_1\log^2(eN)
 \sum_{u=1}^{R_A}\sum_{v=1}^{R_B}c_{u,v}^2
\]
for a numerical constant $C_1>0$.
\end{proposition}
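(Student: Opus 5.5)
The plan is to reduce the estimate to a single pair of rectangles, and then exploit the tensor-product structure of rectangular fibers together with a dyadic decomposition of fiber multiplicities.

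\emph{Step 1: reduction to block pairs.} Fix $\tens M$ and write $R=\|\tens M\|_{\infty,2}$. By definition, $|\inner{\tens G_k}{\tens M}|^2/w(k)=|s(k)|^2/w(k)^2$, where $s(k)=\sum_{\alpha+\beta=k}M_{\alpha,\beta}$. Split $s(k)=\sum_{u,v}s_{uv}(k)$ according to the blocks $A_u\times B_v$, and let $w_{uv}(k)$ be the fiber multiplicity of the pair $(A_u,B_v)$. Since the decompositions are disjoint, $w(k)=\sum_{u,v}w_{uv}(k)$. Cauchy--Schwarz with weights $w_{uv}(k)$ then gives
\[
|s(k)|^2\le w(k)\sum_{u,v}\frac{|s_{uv}(k)|^2}{w_{uv}(k)},
\]
with the sum over pairs having $w_{uv}(k)>0$. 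Since $w_{uv}(k)\le w(k)$, we obtain $|s(k)|^2/w(k)^2\le\sum_{u,v}|s_{uv}(k)|^2/w_{uv}(k)^2$. Each block $\tens M_{uv}$ satisfies $\|\tens M_{uv}\|_{\infty,2}\le R$. It therefore suffices to prove the single-rectangle bound
\[
\sum_k\frac{|s(k)|^2}{w(k)^2}\le C_1\log^2(eN)\,c^2R^2
\]
for $A=I_1\times I_2$ and $B=J_1\times J_2$, where $c$ is the corresponding ratio $c_{u,v}$.

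\emph{Step 2: single rectangles.} Apply Cauchy--Schwarz again along each fiber: $|s(k)|^2\le w(k)\sum_{\alpha+\beta=k}|M_{\alpha\beta}|^2$. This gives
\[
\sum_k\frac{|s(k)|^2}{w(k)^2}\le\sum_{\alpha,\beta}\frac{|M_{\alpha\beta}|^2}{w(\alpha+\beta)}.
\]
For rectangles, $w(k)=w_1(k_1)w_2(k_2)$, where $w_i$ is the one-dimensional trapezoidal multiplicity of $I_i+J_i$. It equals the distance to the nearest end of $I_i+J_i$ (plus one) until it saturates at the plateau value $\min(|I_i|,|J_i|)$.

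Partition the range of each $k_i$ into dyadic levels $\{w_i\in[2^{j},2^{j+1})\}$ near the two ends, plus the plateau. There are $O(\log(eN))$ levels in each coordinate, hence $O(\log^2(eN))$ cells. On a cell with $w_1\sim2^{j_1}$ and $w_2\sim2^{j_2}$ (non-plateau in both coordinates), every entry with $\alpha+\beta$ in the cell has $\alpha_i$ within $O(2^{j_i})$ of an end of $I_i$. So only $O(2^{j_1+j_2})$ rows are involved, their energy is at most $O(2^{j_1+j_2})R^2$, and dividing by $w\gtrsim2^{j_1+j_2}$ gives $O(R^2)$ per cell.

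\emph{Step 3: plateau cells (main obstacle).} The difficulty is the plateau directions, where the number of involved rows along coordinate $i$ is $|I_i|$ while the divisor is only $\min(|I_i|,|J_i|)$. If one coordinate is at plateau and the other at a dyadic level, and the plateau minimum is attained by $J_i$ rather than $I_i$, counting rows loses the factor $|I_i|/\min(|I_i|,|J_i|)$. Switching to columns fixes that coordinate but may spoil the other one. The plan is simply to count rows throughout and absorb the loss $\max(|I_i|,|J_i|)/\min(|I_i|,|J_i|)\le c$ per coordinate. This uses $|I_i+J_i|\ge\max(|I_i|,|J_i|)$ and the definition of $c_{u,v}$, and gives at most $c^2R^2$ per cell. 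Summing over the $O(\log^2(eN))$ cells and then over $(u,v)$ yields the claimed bound. I expect the bookkeeping in this step, namely verifying the row counts uniformly across the end and plateau regimes and controlling the number of levels by $\log(eN)$, to be the only delicate part.
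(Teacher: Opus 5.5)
Your proposal is correct and follows essentially the same route as the paper. The paper first makes the weighted Cauchy--Schwarz reduction to the block pairs $A_u\times B_v$, using $w_{u,v}(k)\le w(k)$. It then proves a single-rectangle bound (Lemma~\ref{lem:rectangular-fiber-bound}) in the way you describe: fiberwise Cauchy--Schwarz, a dyadic splitting of the trapezoidal multiplicities $w_i$ into increasing, plateau, and decreasing parts, and row counting in which the plateau loss is absorbed via $|I_i|\le|I_i+J_i|\le c\min\{|I_i|,|J_i|\}$. The paper's bookkeeping only makes your constants explicit: at most $16c^2\norm{\tens M}_{\infty,2}^2$ per cell, at most $36(1+\log|A_0+B_0|)^2$ cells, and $|A_u+B_v|\le N$.
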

The factors $c_{u,v}$ quantify the coordinatewise scale mismatch between the rectangular pieces. In particular, a fixed number of uniformly scale-comparable pieces gives $\Gamma_{A,B}=O(\log^2N)$.

\begin{corollary}\label{cor:rect-decomp-global}
Suppose that Assumption~\ref{ass:sampling-model} holds, the amplitudes are nonzero, $\mu_1$ is defined, and $A,B$ admit the decomposition in Proposition~\ref{prop:rectangular-decomposition}. There is a numerical constant, still denoted by $C_2$, such that exact completion holds with probability at least $1-N^{-10}$ if $p\ge\frac{C_2\mu_1c_{A,B}s\log^4N}{N} \sum_{u=1}^{R_A}\sum_{v=1}^{R_B}c_{u,v}^2$. For a fixed number of uniformly scale-comparable pieces, an sample count  $\mathcal{O}(\mu_1c_{A,B}s\log^4N)$ is therefore sufficient with high probability.
\end{corollary}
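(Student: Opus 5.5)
This corollary is a direct substitution of Proposition~\ref{prop:rectangular-decomposition} into Theorem~\ref{thm:aperture-pencil-global}; no new probabilistic argument is needed. The hypotheses listed are exactly those of Theorem~\ref{thm:aperture-pencil-global}: Assumption~\ref{ass:sampling-model}, nonzero amplitudes, and $\mu_1$ defined. That theorem therefore gives exact completion with probability at least $1-N^{-10}$ whenever
\[
p\ge C_2\Gamma_{A,B}\mu_1c_{A,B}s\log^2N/N .
\]
The plan is to show that the condition assumed in the corollary implies this one, after enlarging the constant.

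Since $A$ and $B$ admit the rectangular decomposition, Proposition~\ref{prop:rectangular-decomposition} gives
\[
\Gamma_{A,B}\le C_1\log^2(eN)\sum_{u,v}c_{u,v}^2 .
\]
The only point needing care is to replace $\log(eN)=1+\log N$ by a multiple of $\log N$. Because $N\ge2$, we have $\log N\ge\log2$, so $1+\log N\le(1+1/\log2)\log N$ and hence $\log^2(eN)\le c_0\log^2N$ with $c_0=(1+1/\log 2)^2$. Then
\[
C_2\Gamma_{A,B}\mu_1c_{A,B}s\log^2N/N\le C_2C_1c_0\,\mu_1c_{A,B}s\log^4N\sum_{u,v}c_{u,v}^2/N .
\]
Renaming $C_2C_1c_0$ as $C_2$, the assumed lower bound on $p$ implies \eqref{eq:aperture-global-sample}, and the exact-completion conclusion together with its probability bound transfers unchanged.

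For the final sentence, interpret ``a fixed number of uniformly scale-comparable pieces'' as $R_A,R_B\le R$ and $c_{u,v}\le\kappa$ with $R,\kappa$ fixed. Then $\sum_{u,v}c_{u,v}^2\le R^2\kappa^2=O(1)$. Under Assumption~\ref{ass:sampling-model} the expected number of observed coefficients is $\mathbb E|\Omega|=pN$, so choosing $p$ at the threshold gives $\mathbb E|\Omega|=O(\mu_1c_{A,B}s\log^4N)$.

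If a high-probability statement about $|\Omega|$ itself is wanted, a Chernoff bound for the Bernoulli sum gives $|\Omega|\le 2pN$ with probability at least $1-\exp(-pN/3)$. This is at least $1-N^{-10}$ once $C_2$ is large enough, because $pN\gtrsim\log^4N$ dominates $30\log N$ (the case $p=1$ is trivial).

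There is no real obstacle. The only items to check are the $\log(eN)$ versus $\log N$ conversion, which uses $N\ge2$, and the absorption of constants.
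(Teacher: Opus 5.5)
Your proposal is correct and follows the route the paper intends. The paper gives no separate proof: the corollary comes from inserting $\Gamma_{A,B}\le C_1\log^2(eN)\sum_{u,v}c_{u,v}^2$ from Proposition~\ref{prop:rectangular-decomposition} into the sampling condition \eqref{eq:aperture-global-sample} of Theorem~\ref{thm:aperture-pencil-global} and renaming the constant. Your explicit bound $\log^2(eN)\le(1+1/\log 2)^2\log^2N$ (which uses $N\ge2$) and the Chernoff remark on $|\Omega|$ are valid details the paper leaves implicit.
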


The following examples illustrate balanced aperture-admissible pencils and the resulting control of $c_{A,B}$ and $\Gamma_{A,B}$ for representative aperture geometries. The same constructions are used in the numerical experiments. For integers $a\le b$, write $[a,b]_{\mathbb Z}:=[a,b]\cap\mathbb Z$.
\begin{example}\label{ex:balanced-aperture-pencils}
\emph{(i) Elliptical aperture (\Cref{fig:ellipse-music}).} For
$$
S_{\rm ap}=\left\{k\in\mathbb Z^2:
\frac{k_1^2}{14^2}+\frac{k_2^2}{10^2}\le1\right\},
$$
we take
$$
A=\bigl([-6,6]_{\mathbb Z}\times[-1,1]_{\mathbb Z}\bigr)\cup
\bigl([-4,4]_{\mathbb Z}\times[2,4]_{\mathbb Z}\bigr),~
B=\bigl([-6,6]_{\mathbb Z}\times[-1,1]_{\mathbb Z}\bigr)\cup
\bigl([-4,4]_{\mathbb Z}\times[-4,-2]_{\mathbb Z}\bigr).
$$
Then $A+B\subset S_{\rm ap}$, and Proposition~\ref{prop:rectangular-decomposition} gives $c_{A,B}=251/66<4$ and $\Gamma_{A,B}\le19C_1\log^2(eN)$.

\emph{(ii) Sector aperture (\Cref{fig:sector-music}).} For
$$
S_{\rm ap}=\left\{k\in\mathbb Z^2:
k_1^2+k_2^2\le14^2,\ |k_2|\le\sqrt3\,k_1\right\},
$$
we take
$$
A=B=\bigl([3,4]_{\mathbb Z}\times[-5,5]_{\mathbb Z}\bigr)\cup
\bigl([5,6]_{\mathbb Z}\times[-3,3]_{\mathbb Z}\bigr).
$$
Then $A+B\subset S_{\rm ap}$, and Proposition~\ref{prop:rectangular-decomposition} gives $c_{A,B}=41/12<4$ and $\Gamma_{A,B}\le19C_1\log^2(eN)$.

\emph{(iii) Pencil balance (\Cref{fig:pencil-balance-music}).} Let $S_{\rm ap}=[0,20]_{\mathbb Z}^2$ and compare
$$
A_{\rm bal}=B_{\rm bal}=[0,10]_{\mathbb Z}^2,\qquad
A_{\rm un}=[0,5]_{\mathbb Z}^2,\qquad
B_{\rm un}=[0,15]_{\mathbb Z}^2.
$$
Both pairs generate $S_{\rm ap}$. Their balance factors are $c_{\rm bal}=\frac{441}{121}$, $c_{\rm un}=\frac{441}{36}$, while Proposition~\ref{prop:rectangular-decomposition} gives $\Gamma_{\rm bal}\le\frac{441}{121}C_1\log^2(eN)$, $\Gamma_{\rm un}\le\frac{441}{36}C_1\log^2(eN)$. Thus the balanced factorization has the smaller sufficient geometric factor in Corollary~\ref{cor:rect-decomp-global}.

\emph{(iv) Four-window aperture (\Cref{fig:four-window-music}).} Let $P=\{0,1\}^2$, $Q=\{0,1\}\times\{0,1,2\}$,  $C=P+Q$, and
$$
S_{\rm ap}=\bigcup_{a,b=0}^1\bigl(C+18a\vect e_1+16b\vect e_2\bigr).
$$
We take
$$
A=P\cup(P+18\vect e_1),\qquad B=Q\cup(Q+16\vect e_2).
$$
Then $A+B=S_{\rm ap}$, with $c_{A,B}=6$ and $\Gamma_{A,B}\le16C_1\log^2(eN)$.
\end{example}

\section{Numerical experiments}
\label{sec:experiments}
We evaluate the complete reconstruction procedure consisting of Algorithm~\ref{alg:admm-hankel-completion} followed by Algorithm~\ref{alg:general-domain-music}. The experiments first illustrate reconstruction on connected and disconnected physical apertures, then examine the effect of pencil geometry predicted by our theory, and finally study noise sensitivity and computational time. All computations were performed in MATLAB R2024b on a 64-bit Windows 11 laptop equipped with an Intel Core Ultra 5 125H processor.

Unless stated otherwise, synthetic data are generated from the finite-radius first-Born field and rescaled according to \eqref{eq:finite-radius-demodulated}, with $\omega=3$, $\Delta q_1=\Delta q_2=2\pi/80$, $q_{1,0}=q_{2,0}=0$, $\hat{\vect d}_0=(0,0,1)^T$, and $\rho(\hat{\vect r})\equiv R_{\rm obs}=10^5$. The off-grid locations are drawn from $[0.15,0.45]^2$ with minimum periodic separation $0.075$, while the scattering strengths have independent phases uniformly distributed on $[0,2\pi)$ and magnitudes uniformly distributed on $[0.03,0.05]$. For a prescribed sample budget $m$, the observed indices are drawn uniformly without replacement from the admissible aperture. Additive complex Gaussian noise $\bm\nu$ is scaled so that $\mathrm{SNR}_{\rm dB}=20\log_{10}\bigl(\|(F_{\rm fr}(\hat{\vect r}_k))_{k\in\Omega}\|_2/\|(\nu_k)_{k\in\Omega}\|_2\bigr)$. The localization error is the matched root-mean-square distance normalized by the diameter of the field of view, and a trial is declared successful when this relative error is at most $5\%$. ADMM is run for at most $300$ iterations with adaptive $\tau\in[10^{-6},10^6]$ and feasibility tolerance $10^{-9}$. We use $\tau_0=1$, residual-balance parameter $10$, update interval $10$, and absolute and relative tolerances $10^{-7}$ for the experiments in \Cref{fig:four-window-music,fig:nine-window-music}, and $\tau_0=4$, residual-balance parameter $2$, update interval $5$, and absolute and relative tolerances $10^{-4}$ for the connected-aperture experiments and those in \Cref{fig:pencil-balance-music,fig:snr-runtime-music}.

\subsection{Localization on connected apertures}

We first illustrate the complete reconstruction procedure on two connected apertures with different geometries. The examples demonstrate how aperture-admissible pencils can be adapted to the acquisition geometry and used for off-grid localization from sparse observations.

For the sector aperture in \Cref{fig:sector-music}, we use the balanced pencil
$$
A=B=\bigl([3,4]\times[-5,5]\bigr)\cup\bigl([5,6]\times[-3,3]\bigr),
$$
for which $A+B$ is contained in the accessible aperture. We observe $m=13$ of the $123$ admissible coefficients at $40$ dB. As shown in \Cref{fig:sector-music}, the two scatterers are accurately localized.

\begin{figure}[t]
\centering
\includegraphics[width=\textwidth]{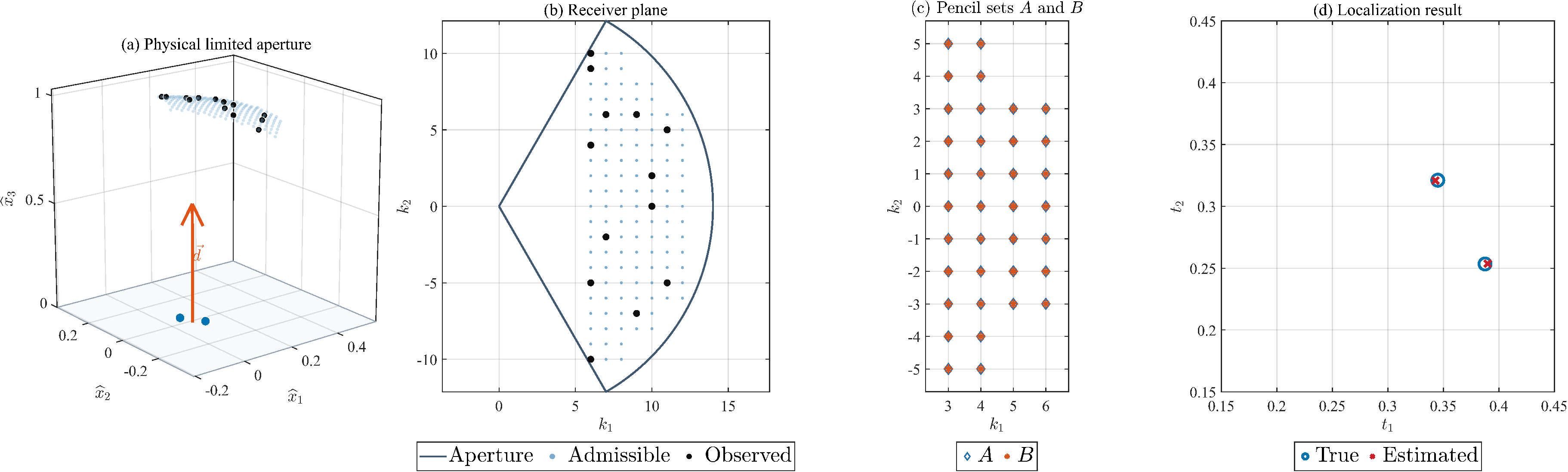}
\caption{Sector aperture with two scatterers reconstructed from $m=13$ of $123$ admissible coefficients at $40$ dB. The panels show the acquisition geometry, observed indices, pencil factors, and reconstructed locations.}
\label{fig:sector-music}
\end{figure}

For the elliptical aperture in \Cref{fig:ellipse-music}, we take
$$
\begin{aligned}
A&=\bigl([-6,6]\times[-1,1]\bigr)\cup\bigl([-4,4]\times[2,4]\bigr),\\
B&=\bigl([-6,6]\times[-1,1]\bigr)\cup\bigl([-4,4]\times[-4,-2]\bigr).
\end{aligned}
$$
 Using $m=46$ observations at $40$ dB, the five scatterers, including a prescribed close pair, are accurately localized; see \Cref{fig:ellipse-music}.

\begin{figure}[t]
\centering
\includegraphics[width=\textwidth]{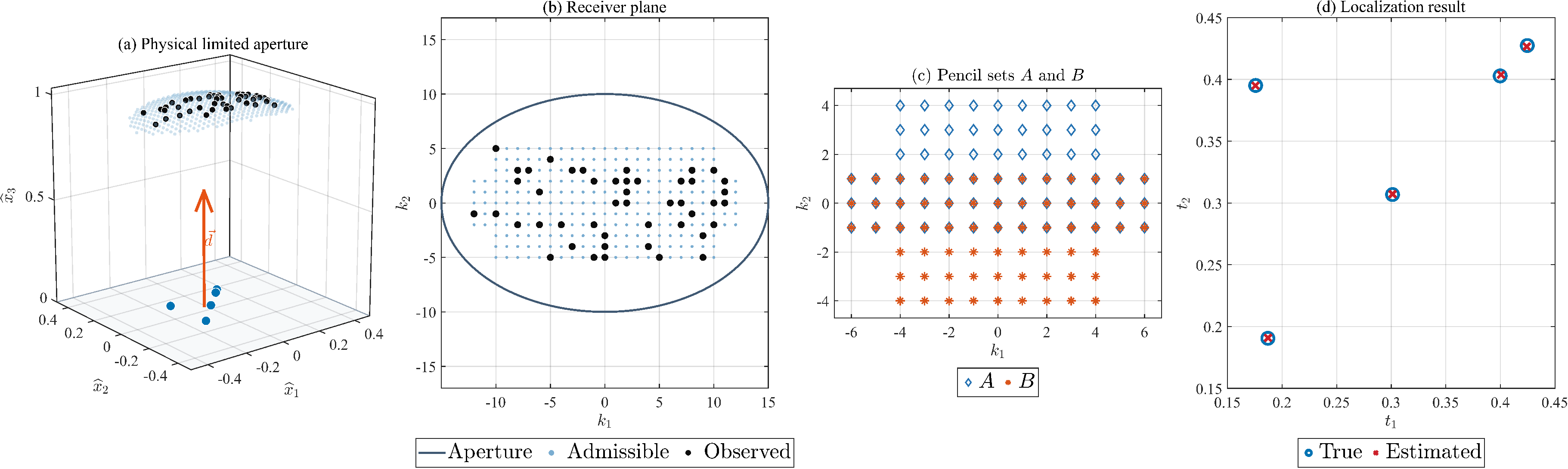}
\caption{Elliptical aperture with five scatterers reconstructed from $m=46$ of $251$ admissible coefficients at $40$ dB. The panels show the acquisition geometry, observed indices, pencil factors, and reconstructed locations.}
\label{fig:ellipse-music}
\end{figure}

\subsection{Disconnected acquisition windows}

We next consider disconnected acquisition geometries consisting of several separated receiver windows. For four $3\times4$ windows with shifts of $18$ and $16$ lattice units in the two coordinate directions, we use
$$
A=\{0,1\}^2\cup\bigl(\{0,1\}^2+18\vect e_1\bigr),\qquad B=\bigl(\{0,1\}\times\{0,1,2\}\bigr)\cup\bigl(\{0,1\}\times\{0,1,2\}+16\vect e_2\bigr).
$$
Then $A+B$ is exactly the union of four translated copies of $\{0,1,2\}\times\{0,1,2,3\}$. Figure~\ref{fig:four-window-music} shows the corresponding acquisition geometry, sparse observations, pencil factors, and localization result, where the reconstruction is from $m=32$ observations at $30$ dB.

\begin{figure}[htbp]
\centering
\includegraphics[width=\textwidth]{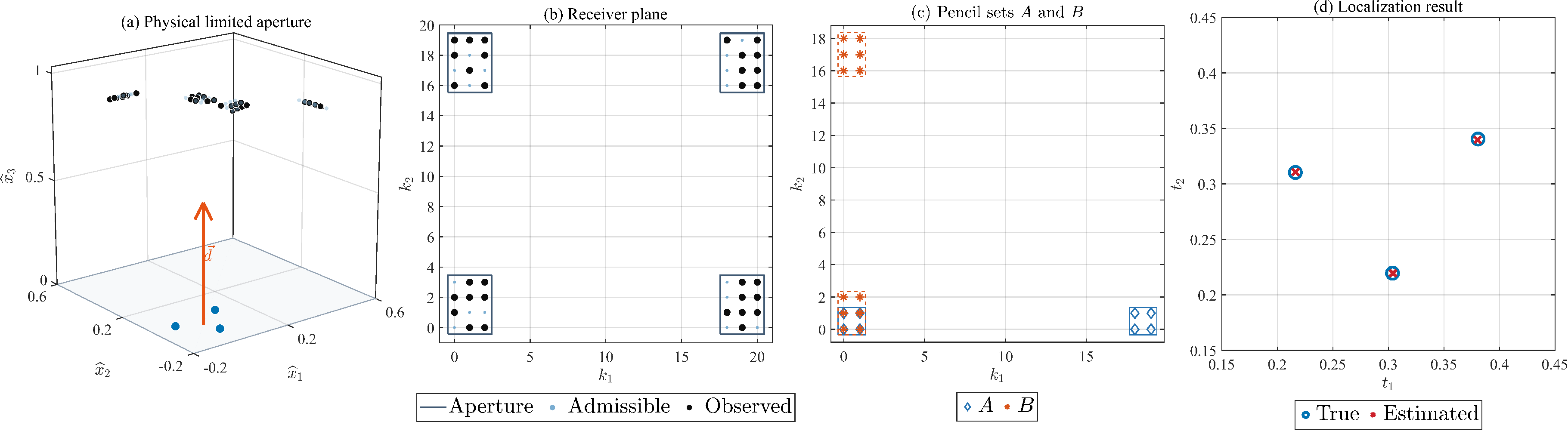}
\caption{Four separated $3\times4$ acquisition windows: acquisition geometry, observed indices, balanced pencil factors, and localization of three scatterers from $m=32$ observations at $30$ dB.}
\label{fig:four-window-music}
\end{figure}

We further consider nine separated $3\times3$ windows. Let $Q=\{0,1\}^2$, $\Lambda=\{0,8\}\times\{0,10\}$, and $\vect\ell=(9,11)^T$, and take
$$
A=Q+\Lambda+\vect\ell,\qquad B=Q+\Lambda-\vect\ell.
$$
It follows that
$$
A+B=\bigcup_{a,b=0}^2\left(\{0,1,2\}^2+8a\vect e_1+10b\vect e_2\right),
$$
which consists of nine separated $3\times3$ windows. Figure~\ref{fig:nine-window-music} shows the reconstruction from $m=33$ observations at $30$ dB.

\begin{figure}[htbp]
\centering
\includegraphics[width=\textwidth]{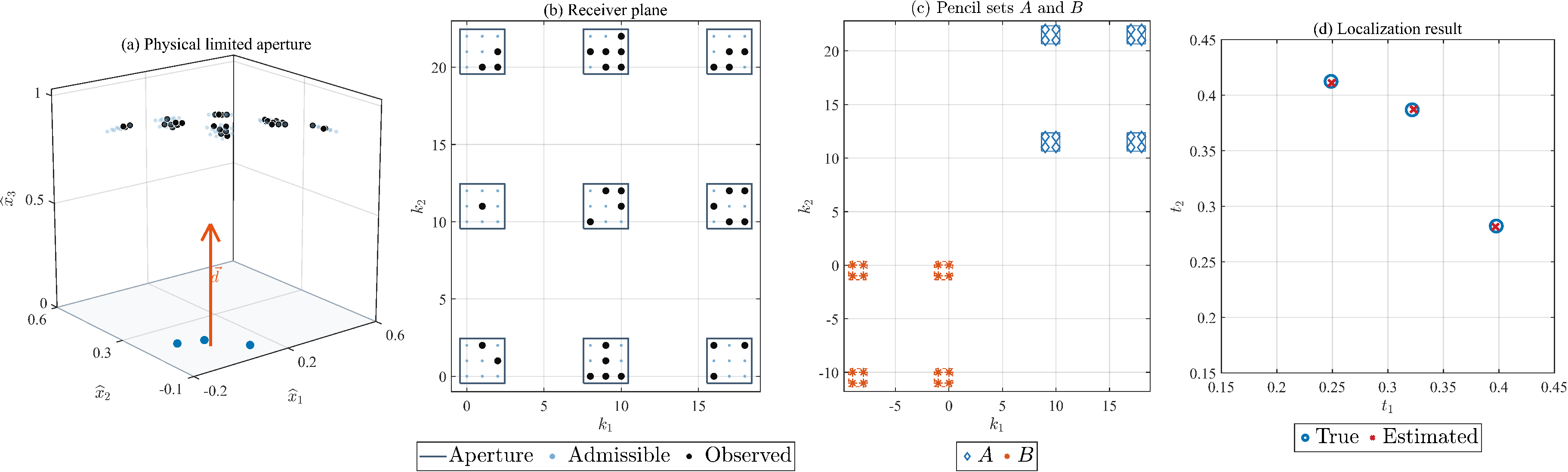}
\caption{Nine separated $3\times3$ acquisition windows: acquisition geometry, observed indices, balanced pencil factors, and localization of three scatterers from $m=33$ observations at $30$ dB.}
\label{fig:nine-window-music}
\end{figure}

These examples illustrate that the proposed pencil construction naturally accommodates disconnected acquisition geometries.

\subsection{Pencil balance, noise stability, and computational time}

We first examine the effect of pencil balance while keeping the sum set and observations fixed. Let $S=[0,20]_{\mathbb Z}^2$ and compare
\[
A_{\rm bal}=B_{\rm bal}=[0,10]_{\mathbb Z}^2,\qquad A_{\rm un}=[0,5]_{\mathbb Z}^2,\qquad B_{\rm un}=[0,15]_{\mathbb Z}^2.
\]
Both pencils generate the same sum set with $N=441$, while their balance factors are $c_{\rm bal}=441/121$ and $c_{\rm un}=441/36$, respectively. We fix $s=3$ and the SNR at $40$ dB, and take $m\in\{12,15,18,21,24,27,30,32,35,38,41\}$. For each $m$, the two pencils are evaluated on $30$ paired trials using the same source realization, observations, and noise. As shown in \Cref{fig:pencil-balance-music}, the balanced pencil enters the reliable recovery regime with fewer observations and yields smaller localization errors throughout the transition region. 

\begin{figure}[t]
\centering
\includegraphics[width=0.465\textwidth]{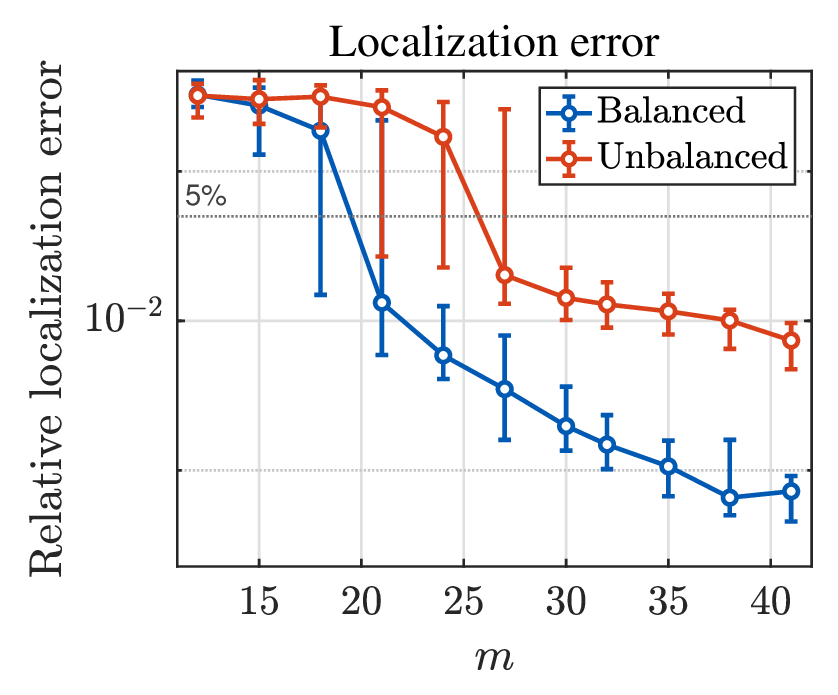}\hfill
\includegraphics[width=0.465\textwidth]{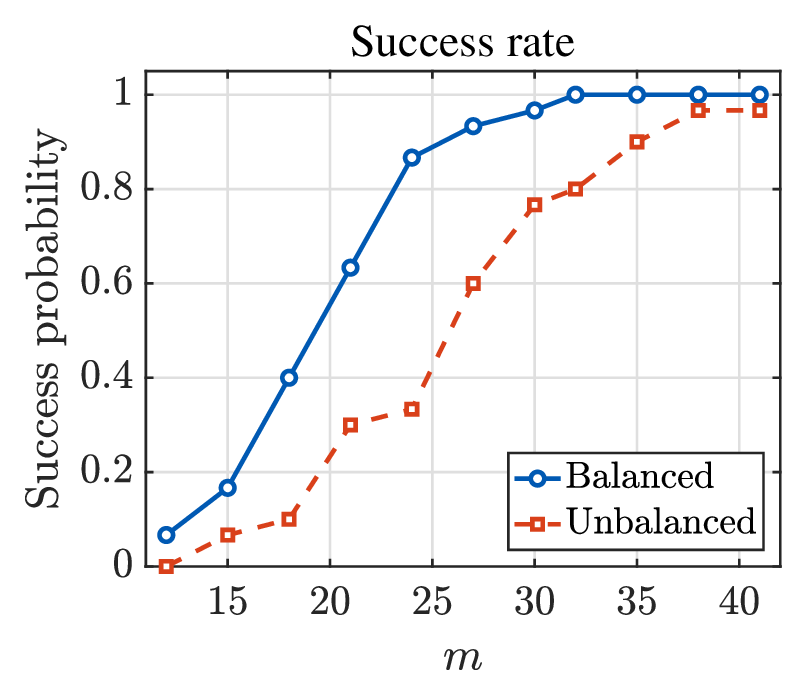}
\caption{Comparison of balanced and unbalanced pencils with the same sum set. Each point is based on $30$ paired trials at $40$ dB.}
\label{fig:pencil-balance-music}
\end{figure}

We next study noise sensitivity and computational time on a family of disconnected four-window apertures. For $C_L=[0,L]_{\mathbb Z}^2$, define
\[
A_L=C_L\cup(C_L+18L\vect e_1),\qquad B_L=C_L\cup(C_L+16L\vect e_2).
\]
Then $S_L=A_L+B_L$ consists of four separated $(2L+1)\times(2L+1)$ windows and $N=|S_L|=4(2L+1)^2$. We set $\Delta q_1=\Delta q_2=2\pi/(80L)$ so that the physical acquisition geometry remains fixed as $L$ varies. For the noise experiment, we take $L=6$, so that $N=676$, and use $m=\lceil6s\log N\rceil=118$. The SNR varies from $10$ to $40$ dB in increments of $5$ dB, with $30$ paired trials at each noise level. For the timing experiment, we take $L\in\{1,2,3,4,5,6\}$ and $m=\min\{N,\lceil6s\log N\rceil\}$ at $40$ dB. For each problem size, the reported time is the median over $30$ trials for ADMM completion and MUSIC evaluation. As shown in left side of \Cref{fig:snr-runtime-music}, the localization error decreases steadily as the observation SNR increases. Right side of \Cref{fig:snr-runtime-music} shows the increase in computational time with the sum-set size under the same sampling rule.

\begin{figure}[t]
\centering
\includegraphics[width=0.465\textwidth]{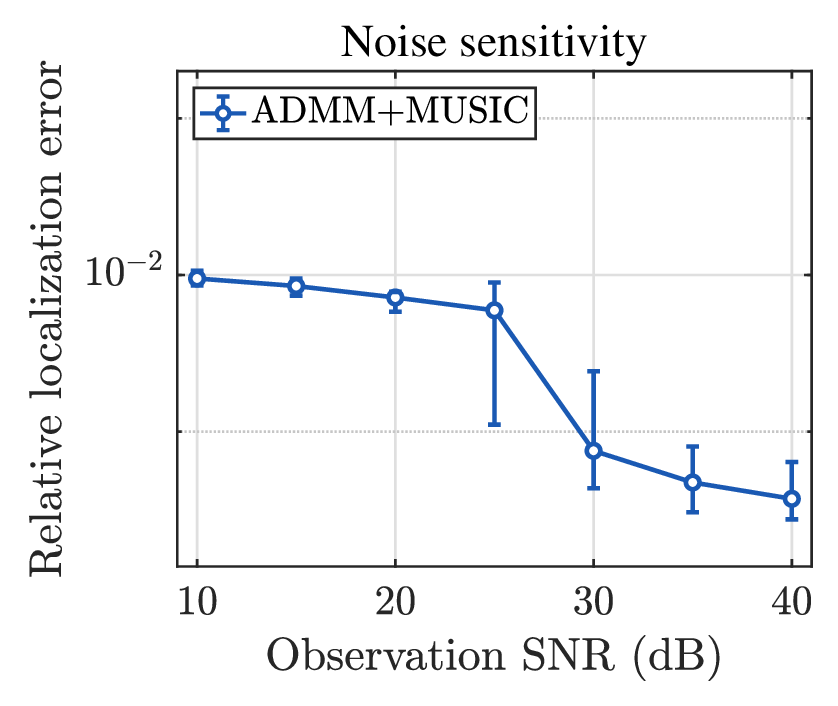}\hfill
\includegraphics[width=0.465\textwidth]{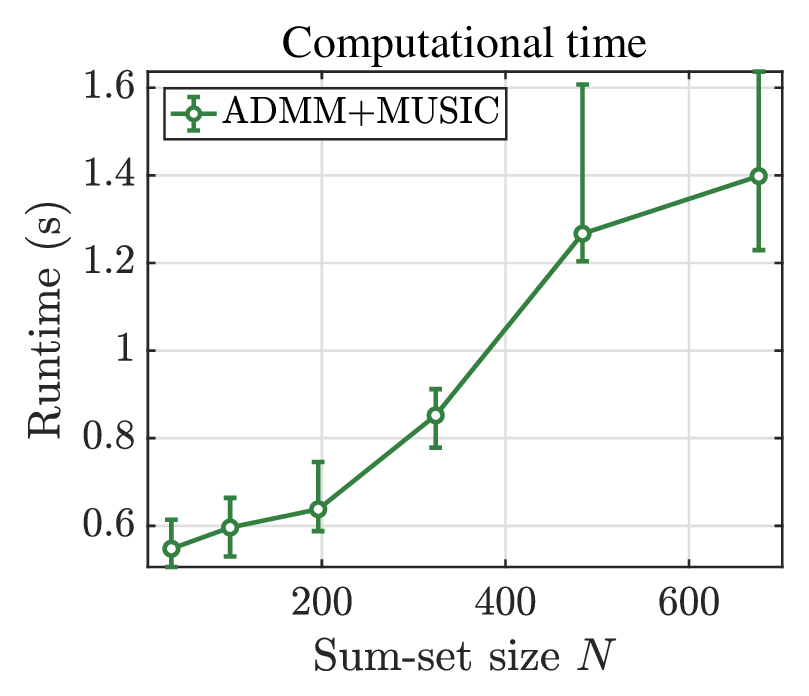}
\caption{Noise sensitivity and computational time for the four-window acquisition geometry. Error bars indicate interquartile ranges over $30$ trials.}
\label{fig:snr-runtime-music}
\end{figure}

\section{Concluding remarks}\label{sec:conclusion}

We developed a geometry adapted Hankel framework for localizing finite planar point scatterers from sparse limited-aperture Born data. The main contribution is to incorporate the acquisition geometry directly into the Hankel pencil design and to quantify the resulting recovery guarantee through Vandermonde conditioning, pencil balance, and fiber accumulation. This yields a principled criterion for selecting aperture-admissible pencils, together with exact and stable completion guarantees and a subsequent MUSIC localization bound. Numerical experiments further illustrate the advantage of the proposed design on nonrectangular and disconnected apertures. Extensions to deterministic sampling and more general scattering models are left for future work.

\section*{Declaration on the use of artificial intelligence}
Generative AI tools were used during revision to assist with language editing, structural reorganization, bibliographic cross-checking, and consistency checks of notation, formulas, and proofs. The authors assume responsibility for all content.

\appendix

\section{Proofs}
\label{app:proofs-admissible}

The Frobenius inner product is $\inner{\tens X}{\tens Y}=\tr(\tens X^*\tens Y)$. For matrices $\tens X,\tens Y$, the rank-one operator $\tens X\otimes\tens Y$ is defined by $(\tens X\otimes\tens Y)\tens M=\inner{\tens Y}{\tens M}\tens X$. For $k\in S$, write
\[
        \tens{P}_k:=\calH_{A,B}(\vect{E}_k)
        =\sqrt{w(k)}\,\tens{G}_k.
\]
Every $\tens{P}_k$ is a $0$-$1$ partial permutation matrix. Hence
\[
\norm{\tens{P}_k}_2=1,\qquad
\norm{\tens{G}_k}_F=1,\qquad
\norm{\tens{G}_k}_2=w(k)^{-1/2}.
\]
The atoms $\{\tens{G}_k:k\in S\}$ have disjoint supports and therefore form an orthonormal basis of $\HankelSpace_{A,B}=\range(\calH_{A,B})$.  We use
\[
        \calA:=\sum_{k\in S}\tens{G}_k\otimes\tens{G}_k
\]
for the orthogonal projector onto this subspace and $\calA^\perp:=\calI-\calA$. Let $\tens{M}_\star=\tens{U}\tens{\Sigma}\tens{V}^*$ be a compact SVD, and define its rank-$s$ tangent space by
\[
        T:=\{\tens{U}\tens{R}^*+\tens{L}\tens{V}^*:
        \tens{R}\in\C^{|B|\times s},\ 
        \tens{L}\in\C^{|A|\times s}\}.
\]
The orthogonal projection onto the tangent space is
\begin{equation}\label{eq:tangent-projection-formula-app}
        \calP_T(\tens{M})
        =\tens{U}\tens{U}^*\tens{M}
        +\tens{M}\tens{V}\tens{V}^*
        -\tens{U}\tens{U}^*\tens{M}\tens{V}\tens{V}^*.
\end{equation}

\subsection{Technical lemmas}
\label{subsec:technical-lemmas}
The following estimates are proved in Section~\ref{subsec:proofs-technical-lemmas}. 
\begin{lemma}\label{lem:leverage-frame}
 With $\mu_1$ defined in Definition~\ref{def:pencil-frame}, the singular-subspace incoherence holds
\begin{equation}\label{eq:leverage-frame-new}
\max_{\alpha\in A}\|\tens U^*\vect e_\alpha\|_2^2\le\frac{\mu_1s}{|A|},\qquad \max_{\beta\in B}\|\tens V^*\vect e_\beta\|_2^2\le\frac{\mu_1s}{|B|}.
\end{equation}
\end{lemma}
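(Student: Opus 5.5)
The plan is to compare the column space of $\tens U$ with that of $\tens V_A$. Because $\tens M_\star=\tens V_A\tens D\tens V_B^T$, and $\tens D$ is invertible since the amplitudes are nonzero, the rank is exactly $s$ once $\tens V_A$ and $\tens V_B$ have full column rank (Proposition~\ref{prop:pencil-rank-physical}). Then $\range(\tens M_\star)=\range(\tens V_A)$, so the leading $s$ left singular vectors satisfy $\range(\tens U)=\range(\tens V_A)$. Consequently $\tens U\tens U^*$ is the orthogonal projector onto $\range(\tens V_A)$, which gives
\[
\tens U\tens U^*=\tens V_A(\tens V_A^*\tens V_A)^{-1}\tens V_A^*.
\]

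With this identity in hand, for each $\alpha\in A$ I would write
\[
\|\tens U^*\vect e_\alpha\|_2^2=\vect e_\alpha^*\tens U\tens U^*\vect e_\alpha
=\vect e_\alpha^*\tens V_A(\tens V_A^*\tens V_A)^{-1}\tens V_A^*\vect e_\alpha
\le\frac{\|\tens V_A^*\vect e_\alpha\|_2^2}{\lambda_{\min}(\tens V_A^*\tens V_A)}.
\]
Every entry of $\tens V_A$ is a product of unimodular numbers, since $|z_{\ell,j}|=1$. Hence the $\alpha$-th row satisfies $\|\tens V_A^*\vect e_\alpha\|_2^2=s$, and the bound becomes
\[
\|\tens U^*\vect e_\alpha\|_2^2\le \frac{s}{\lambda_{\min}(\tens V_A^*\tens V_A)}\le\frac{\mu_1 s}{|A|}
\]
by Definition~\ref{def:pencil-frame}.

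The right factor is handled the same way, after one point about conjugation. We have $\tens M_\star^*=\overline{\tens V_B}\,\overline{\tens D}\,\tens V_A^*$, so $\range(\tens V)=\range(\overline{\tens V_B})$. The Gram matrix $\overline{\tens V_B}^{\,*}\overline{\tens V_B}=\overline{\tens V_B^*\tens V_B}$ has the same eigenvalues as $\tens V_B^*\tens V_B$. Repeating the computation with rows of unit-modulus entries gives $\|\tens V^*\vect e_\beta\|_2^2\le \mu_1 s/|B|$.

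The only step needing care is the identification $\range(\tens U)=\range(\tens V_A)$, which relies on the rank being exactly $s$, and on keeping track of this conjugate in the right-factor case. Everything else is a direct Rayleigh-quotient estimate.
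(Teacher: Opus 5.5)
Your proposal is correct and follows essentially the same route as the paper: you identify $\tens U\tens U^*=\tens V_A(\tens V_A^*\tens V_A)^{-1}\tens V_A^*$, bound the diagonal entry by (row norm $s$)$/\lambda_{\min}(\tens V_A^*\tens V_A)$, invoke the definition of $\mu_1$, and treat the right factor through $\range(\overline{\tens V_B})$. You also make explicit the rank-$s$ justification (nonzero amplitudes plus full column rank) and the conjugate-Gram eigenvalue check, which the paper leaves implicit.
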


\begin{lemma}
\label{lem:rectangular-fiber-bound}
Let $A_0=I_1\times I_2$ and $B_0=J_1\times J_2$ be Cartesian rectangles.  Assume that, for $i=1,2$,
\[
        \max\left\{
        \frac{|I_i+J_i|}{|I_i|},
        \frac{|I_i+J_i|}{|J_i|}
        \right\}\le c_{\rm rec}.
\]
Then every $\tens{M}\in\C^{|A_0|\times|B_0|}$ satisfies
\begin{equation}\label{eq:rect-fiber-energy}
        \norm{\tens{M}}_{\calG,2}^2
        \le 576c_{\rm rec}^2
        \bigl(1+\log|A_0+B_0|\bigr)^2
        \norm{\tens{M}}_{\infty,2}^2,
\end{equation}
where the $\calG$-seminorm is formed with the fibers of the rectangle pair.
\end{lemma}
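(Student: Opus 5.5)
The plan is to reduce the two-dimensional estimate to a one-dimensional dyadic counting argument, applied separately in each coordinate. This uses the product structure of the fibers of a rectangle pair.

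\emph{Product structure and Cauchy--Schwarz.} For $k=(k_1,k_2)$ we have $w(k)=w_1(k_1)w_2(k_2)$, where $w_i(k_i)=\#\{(\alpha_i,\beta_i)\in I_i\times J_i:\alpha_i+\beta_i=k_i\}$. Each $w_i$ is a trapezoid: it increases by one from the lower end of $I_i+J_i$, has a plateau at height $m_i:=\min\{|I_i|,|J_i|\}$, and then decreases. Since $\inner{\tens G_k}{\tens M}=w(k)^{-1/2}\sum_{\alpha+\beta=k}M_{\alpha,\beta}$, Cauchy--Schwarz along each fiber gives
\[
 \norm{\tens M}_{\calG,2}^2=\sum_{k}\frac{\bigl|\sum_{\alpha+\beta=k}M_{\alpha,\beta}\bigr|^2}{w(k)^2}
 \le\sum_{k}\frac{1}{w(k)}\sum_{\alpha+\beta=k}|M_{\alpha,\beta}|^2 .
\]
So it suffices to bound the weighted energy $\sum_{\alpha,\beta}|M_{\alpha,\beta}|^2/w(\alpha+\beta)$ by $C c_{\rm rec}^2(1+\log|A_0+B_0|)^2 R^2$, where $R:=\norm{\tens M}_{\infty,2}$.

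\emph{Dyadic localization in each coordinate.} For $j_i\ge0$, let $K_{j_i}^{(i)}=\{k_i:2^{j_i}\le w_i(k_i)<2^{j_i+1}\}$. There are at most $1+\log_2|I_i+J_i|\lesssim 1+\log|A_0+B_0|$ nonempty levels. The key observation concerns a pair $(\alpha_i,\beta_i)$ with $\alpha_i+\beta_i\in K^{(i)}_{j_i}$ in a sloped part of the trapezoid. At the lower slope $w_i(k_i)=k_i-\min I_i-\min J_i+1$, so $\alpha_i-\min I_i<w_i<2^{j_i+1}$; symmetrically, on the upper slope $\max I_i-\alpha_i<2^{j_i+1}$. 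Hence the contributing $\alpha_i$ lie in a set of at most $2^{j_i+2}$ indices. On the plateau level we instead use the trivial count $|I_i|\le|I_i+J_i|\le c_{\rm rec}\,m_i$, where the last step follows from the hypothesis of the lemma; there $w_i\ge m_i$. In both cases, the number of admissible $\alpha_i$ is at most $4c_{\rm rec}$ times the lower bound for $w_i$ on that level.

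\emph{Summation.} For a level pair $(j_1,j_2)$, the contributing rows $\alpha=(\alpha_1,\alpha_2)$ lie in a product set of size at most $16c_{\rm rec}^2\,\underline w_1\underline w_2$, where $\underline w_i$ is the lower bound of $w_i$ on the level. Each row has energy at most $R^2$, so
\[
 \sum_{\alpha+\beta\in K^{(1)}_{j_1}\times K^{(2)}_{j_2}}\frac{|M_{\alpha,\beta}|^2}{w(\alpha+\beta)}
 \le\frac{16c_{\rm rec}^2\,\underline w_1\underline w_2\,R^2}{\underline w_1\underline w_2}=16c_{\rm rec}^2R^2 .
\]
Summing over the at most $(1+\log_2|I_1+J_1|)(1+\log_2|I_2+J_2|)$ level pairs and converting $\log_2$ to $\log$ gives \eqref{eq:rect-fiber-energy} with a numerical constant. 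The slack in the dyadic factor $2$ and in the count of $\alpha_i$ near both ends should produce a constant of the form $576$.

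\emph{Main obstacle.} The delicate point is the row-localization claim on the sloped parts, which must be made uniformly whether $|I_i|\le|J_i|$ or the reverse. When $|J_i|<|I_i|$, the row-count bound on the plateau requires the hypothesis $|I_i+J_i|/|J_i|\le c_{\rm rec}$, not the one involving $|I_i|$. I would therefore write $w_i(k_i)=\min\{k_i-\min(I_i+J_i)+1,\ \max(I_i+J_i)-k_i+1,\ m_i\}$ explicitly and check each branch. If row-counting becomes awkward in some case, I would use columns instead; $\norm{\cdot}_{\infty,2}$ controls both rows and columns, which covers that case.
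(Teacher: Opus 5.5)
Your proposal is correct and follows essentially the same route as the paper: Cauchy--Schwarz on each fiber, the product structure $w(k)=w_1(k_1)w_2(k_2)$, a dyadic decomposition of each trapezoidal $w_i$, a per-coordinate row count of order $c_{\rm rec}2^{j_i}$ (with $|I_i|\le|I_i+J_i|\le c_{\rm rec}m_i$ on the plateau), and the row-energy bound $\norm{\tens M}_{\infty,2}^2$ on each block. The only difference is that the paper further splits each level by monotonicity part, and that split is where its factor $576=9\cdot4\cdot16$ comes from. Your merged levels instead give the smaller constant $16/(\log 2)^2<34$, so you do not need to find extra slack to reach $576$.
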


\begin{lemma}
\label{lem:kernel-bounds}
With $\mu_1$ defined in Definition~\ref{def:pencil-frame}, we have for every $a,b\in S$,
\begin{equation}\label{eq:tangent-atom-bound-app}
        \max_{k\in S}\norm{\calP_T\tens{G}_k}_F^2
        \le \frac{2\mu_1c_{A,B}s}{N}, \quad
|\inner{\tens{G}_b}{\calP_T\tens{G}_a}|
        \le\frac{3\mu_1c_{A,B}s}{N}
        \sqrt{\frac{w(b)}{w(a)}}.
\end{equation}
\end{lemma}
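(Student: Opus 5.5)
We prove the two bounds in Lemma~\ref{lem:kernel-bounds} using the projection formula \eqref{eq:tangent-projection-formula-app}, the incoherence bounds of Lemma~\ref{lem:leverage-frame}, and the sparse structure of the atoms $\tens G_k$. Write $\tens P_U=\tens U\tens U^*$ and $\tens P_V=\tens V\tens V^*$. Since $\calP_T$ is an orthogonal projection,
\[
\norm{\calP_T\tens G_k}_F^2=\norm{\tens P_U\tens G_k}_F^2+\norm{(\tens I-\tens P_U)\tens G_k\tens P_V}_F^2
\le\norm{\tens P_U\tens G_k}_F^2+\norm{\tens G_k\tens P_V}_F^2 .
\]

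\textbf{First bound.} The matrix $\tens G_k$ has entries $w(k)^{-1/2}$ at the positions $(\alpha,\beta)$ with $\alpha+\beta=k$. For fixed $k$, $\beta$ determines $\alpha$, so each row and each column contains at most one nonzero entry.

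\begin{itemize}
\item The columns of $\tens G_k$ are $w(k)^{-1/2}\vect e_{\alpha}$ for the $w(k)$ admissible pairs. Lemma~\ref{lem:leverage-frame} then gives
\[
\norm{\tens U^*\tens G_k}_F^2=\frac{1}{w(k)}\sum_{\alpha+\beta=k}\norm{\tens U^*\vect e_\alpha}_2^2\le\frac{\mu_1 s}{|A|}.
\]
\item The same argument applied to the rows gives $\norm{\tens G_k\tens V}_F^2\le\mu_1s/|B|$.
\end{itemize}

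Summing the two bounds gives
\[
\norm{\calP_T\tens G_k}_F^2\le\mu_1 s\Bigl(\frac{1}{|A|}+\frac{1}{|B|}\Bigr)\le\frac{2\mu_1c_{A,B}s}{N},
\]
because $N/|A|,\,N/|B|\le c_{A,B}$ by \eqref{eq:shape-factor}.

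\textbf{Second bound.} Expand
\[
\inner{\tens G_b}{\calP_T\tens G_a}=\inner{\tens G_b}{\tens P_U\tens G_a}+\inner{\tens G_b}{\tens G_a\tens P_V}-\inner{\tens G_b}{\tens P_U\tens G_a\tens P_V}
\]
and bound each term entrywise. The first term equals
\[
\frac{1}{\sqrt{w(a)w(b)}}\sum_{\alpha+\beta=b}\ \sum_{\alpha'+\beta=a}(\tens P_U)_{\alpha\alpha'},
\]
where the sum is over pairs $(\alpha,\alpha')$ that share a column index $\beta$. For a given $\beta$ there is at most one $\alpha$ with $\alpha+\beta=b$ and at most one $\alpha'$ with $\alpha'+\beta=a$. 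Hence at most $\min\{w(a),w(b)\}$ column indices contribute, each with one term.

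Each contribution satisfies
\[
|(\tens P_U)_{\alpha\alpha'}|\le\norm{\tens U^*\vect e_\alpha}_2\norm{\tens U^*\vect e_{\alpha'}}_2\le\frac{\mu_1s}{|A|}\le\frac{\mu_1c_{A,B}s}{N}.
\]
The first term is therefore bounded by
\[
\frac{\mu_1c_{A,B}s}{N}\cdot\frac{\min\{w(a),w(b)\}}{\sqrt{w(a)w(b)}}\le\frac{\mu_1c_{A,B}s}{N}\sqrt{\frac{w(b)}{w(a)}},
\]
since $\min\{w(a),w(b)\}\le w(b)$. The second term is handled identically with $\tens P_V$ and rows in place of columns.

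\textbf{Main obstacle: the mixed term.} Here I would not count entries. Instead I write the mixed term as $\tr(\tens G_b^*\tens P_U\tens G_a\tens P_V)$ and apply Cauchy--Schwarz:
\[
|\inner{\tens P_U\tens G_b}{\tens G_a\tens P_V}|
\le\norm{\tens U^*\tens G_b}_F\,\norm{\tens G_a\tens V}_F .
\]
A direct estimate by the first-bound quantities gives
\[
\norm{\tens U^*\tens G_b}_F\le\sqrt{\frac{\mu_1 s}{|A|}},\qquad
\norm{\tens G_a\tens V}_F\le\sqrt{\frac{\mu_1 s}{|B|}},
\]
so the mixed term is at most $\mu_1 s/\sqrt{|A||B|}\le\mu_1c_{A,B}s/N$.

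The issue is whether this estimate carries the required factor $\sqrt{w(b)/w(a)}$, which can be smaller than $1$. If it does not, I would redo the bound more carefully using the sparsity of $\tens G_a$ and $\tens G_b$:
\[
\norm{\tens U^*\tens G_b}_F\le\max_\alpha\norm{\tens U^*\vect e_\alpha}_2 ,
\qquad
\norm{\tens G_a\tens V}_F\le\max_\beta\norm{\tens V^*\vect e_\beta}_2 .
\]
I would then carry the explicit factors $w(a)^{-1/2}$ and $w(b)^{1/2}$ through the computation. The goal is to obtain $\frac{\mu_1 s}{\sqrt{|A||B|}}\sqrt{w(b)/w(a)}$ up to a constant.

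If all three terms are established with that factor, summing them gives the stated constant $3$.
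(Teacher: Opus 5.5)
Your first bound, and your counting argument for the one-sided terms $\inner{\tens G_b}{\tens U\tens U^*\tens G_a}$ and $\inner{\tens G_b}{\tens G_a\tens V\tens V^*}$, are correct and match the paper. The gap is the mixed term, which you leave unresolved. Your Frobenius Cauchy--Schwarz estimate gives $\mu_1 s/\sqrt{|A||B|}$. This yields the claimed bound for that term only when $w(b)\ge w(a)$. For $w(b)<w(a)$ the target $\frac{\mu_1c_{A,B}s}{N}\sqrt{w(b)/w(a)}$ is smaller. For $|A|=|B|$ there is also no slack in the other two terms to absorb the difference. Self-adjointness of $\calP_T$ does not help, because your bound is symmetric in $a,b$.

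Your proposed fallback cannot fix this. The quantities $\norm{\tens U^*\tens G_b}_F^2$ and $\norm{\tens G_a\tens V}_F^2$ are $w$-weighted averages of leverage scores. The factors $w(b)^{1/2}$ and $w(a)^{-1/2}$ have already cancelled inside them, so no ratio $\sqrt{w(b)/w(a)}$ can be extracted. Structurally, a Frobenius pairing sees the atoms only through $\norm{\tens G_a}_F=\norm{\tens G_b}_F=1$, while the desired factor comes from an asymmetric pairing.

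The factor is genuinely needed downstream. The $\calG,\infty$ Bernstein step in the proof of Lemma~\ref{lem:completion-certificate-event} uses $|\inner{\tens G_b}{\calP_T\tens G_a}|/\sqrt{w(b)}\le 3\mu_1c_{A,B}s/(N\sqrt{w(a)})$.

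The missing idea is to use the \emph{spectral} norm of $\tens G_a$ together with an entrywise ($\ell_1$--$\ell_\infty$) pairing against $\tens G_b$. Since $\tens G_a=w(a)^{-1/2}\tens P_a$ with $\tens P_a$ a partial permutation matrix, $\norm{\tens G_a}_2=w(a)^{-1/2}$. Lemma~\ref{lem:leverage-frame} then gives, for every $(\alpha,\beta)$,
\[
|\vect e_\alpha^*\tens U\tens U^*\tens G_a\tens V\tens V^*\vect e_\beta|
\le\norm{\tens U^*\vect e_\alpha}_2\,\norm{\tens G_a}_2\,\norm{\tens V^*\vect e_\beta}_2
\le\frac{\mu_1 s}{\sqrt{|A||B|\,w(a)}}.
\]
The matrix $\tens G_b$ has exactly $w(b)$ nonzero entries, each equal to $w(b)^{-1/2}$. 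Summing over them yields
\[
|\inner{\tens G_b}{\tens U\tens U^*\tens G_a\tens V\tens V^*}|\le\sqrt{\frac{w(b)}{w(a)}}\,\frac{\mu_1 s}{\sqrt{|A||B|}},
\]
and $(|A||B|)^{-1/2}\le c_{A,B}/N$ completes the third term. Equivalently, you can pair $\norm{\tens U^*\tens G_b\tens V}_*\le\sqrt{w(b)}\,\mu_1 s/\sqrt{|A||B|}$ with $\norm{\tens U^*\tens G_a\tens V}_2\le w(a)^{-1/2}$. This is exactly the paper's treatment of the mixed term, and with it your three contributions sum to the constant $3$.
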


\begin{lemma}
\label{lem:initial-structural-norms}
Let $\tens{Q}_\star=\tens{U}\tens{V}^*$, and define $\norm{\tens M}_{\calG,\infty} :=\max_{k\in S}\frac{|\inner{\tens G_k}{\tens M}|}{\sqrt{w(k)}}$. Then, for every $a\in S$, we have
\begin{equation}\label{eq:Q-app}
\norm{\tens{Q}_\star}_{\calG,\infty}\le\frac{\mu_1c_{A,B}s}{N},\quad
\norm{\tens{Q}_\star}_{\calG,2}^2\le\frac{\mu_1c_{A,B}\Gamma_{A,B}s}{N},\quad
\norm{\calP_T\tens{P}_a}_{\calG,2}^2
        \le \frac{10\mu_1c_{A,B}\Gamma_{A,B}s}{N}
\end{equation}

\end{lemma}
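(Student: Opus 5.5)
The plan is to reduce all three estimates to the incoherence bounds of Lemma~\ref{lem:leverage-frame}. The two $\calG,2$ bounds then follow from the definition \eqref{eq:fiber-parameter} of $\Gamma_{A,B}$, via $\|\tens M\|_{\calG,2}^2\le\Gamma_{A,B}\|\tens M\|_{\infty,2}^2$. Throughout, I use the elementary fact $\min\{|A|,|B|\}=N/c_{A,B}$, which holds by \eqref{eq:shape-factor}, together with $\sqrt{|A||B|}\ge\min\{|A|,|B|\}$.

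\emph{First bound.} Since $\tens G_k$ has entries $w(k)^{-1/2}$ on the fiber $\{\alpha+\beta=k\}$, we have
\[
\frac{|\inner{\tens G_k}{\tens Q_\star}|}{\sqrt{w(k)}}
=\frac1{w(k)}\Bigl|\sum_{\alpha+\beta=k}(\tens U\tens V^*)_{\alpha,\beta}\Bigr|
\le\max_{\alpha,\beta}\bigl|\vect e_\alpha^*\tens U\tens V^*\vect e_\beta\bigr|.
\]
By Cauchy--Schwarz and Lemma~\ref{lem:leverage-frame}, each entry is bounded by $\|\tens U^*\vect e_\alpha\|_2\|\tens V^*\vect e_\beta\|_2\le\mu_1s/\sqrt{|A||B|}$. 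This is at most $\mu_1c_{A,B}s/N$.

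\emph{Second bound.} The rows of $\tens Q_\star$ satisfy $\|\vect e_\alpha^*\tens U\tens V^*\|_2=\|\tens U^*\vect e_\alpha\|_2$, because $\tens V$ has orthonormal columns. Likewise, the columns satisfy $\|\tens U\tens V^*\vect e_\beta\|_2=\|\tens V^*\vect e_\beta\|_2$. Hence $\|\tens Q_\star\|_{\infty,2}^2\le\mu_1s/\min\{|A|,|B|\}=\mu_1c_{A,B}s/N$. Multiplying by $\Gamma_{A,B}$ gives the claim.

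\emph{Third bound.} Expand $\calP_T\tens P_a$ by \eqref{eq:tangent-projection-formula-app} into the three terms $\tens U\tens U^*\tens P_a$, $\tens P_a\tens V\tens V^*$, and $\tens U\tens U^*\tens P_a\tens V\tens V^*$. I bound the row norm of each term at a fixed $\alpha$:
\begin{itemize}
\item[(i)] $\|\vect e_\alpha^*\tens U\tens U^*\tens P_a\|_2\le\|\tens U^*\vect e_\alpha\|_2\|\tens P_a\|_2\le\sqrt{\mu_1s/|A|}$, since $\|\tens P_a\|_2=1$.
\item[(ii)] Because $\tens P_a$ is a partial permutation, $\vect e_\alpha^*\tens P_a$ is either $0$ or a single $\vect e_\beta^*$. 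Therefore $\|\vect e_\alpha^*\tens P_a\tens V\tens V^*\|_2\le\|\tens V^*\vect e_\beta\|_2\le\sqrt{\mu_1s/|B|}$.
\item[(iii)] The third term is bounded like (i), using $\|\tens P_a\tens V\tens V^*\|_2\le1$.
\end{itemize}
The triangle inequality gives a row norm of at most $3\sqrt{\mu_1s/\min\{|A|,|B|\}}$. Column norms are handled symmetrically, with the roles of $\tens U$ and $\tens V$ exchanged. Hence $\|\calP_T\tens P_a\|_{\infty,2}^2\le9\mu_1c_{A,B}s/N\le10\mu_1c_{A,B}s/N$. Applying $\Gamma_{A,B}$ finishes the proof.

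The only step requiring care is (ii), the middle term of the third bound. There the incoherence must be transferred through the permutation structure of $\tens P_a$ rather than through a spectral-norm estimate. Apart from that, the argument is routine bookkeeping of the factors $|A|$, $|B|$ against $N/c_{A,B}$.
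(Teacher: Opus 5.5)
Your proposal is correct and follows the paper's proof essentially line by line. It uses the entrywise incoherence bound for $\tens Q_\star$, then the row and column norms of $\tens Q_\star$ and of the three terms of $\calP_T\tens P_a$ (with the partial-permutation structure of $\tens P_a$ handling the middle term), and finally the definition of $\Gamma_{A,B}$. The only cosmetic difference is the constant: you bound each row term by $\sqrt{\mu_1 s/\min\{|A|,|B|\}}$ and get $9$, whereas the paper combines the terms as $5\mu_1 s(|A|^{-1}+|B|^{-1})\le 10\mu_1 c_{A,B}s/N$.
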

\begin{lemma}
\label{lem:completion-certificate-event}
Let $0<\delta<1/2$ and suppose that $p\ge \frac{C_0\mu_1c_{A,B}\Gamma_{A,B}s\log^2(N/\delta)}{N}$, where $C_0>0$ is a sufficiently large numerical constant.  If $\Omega\subseteq S$ is drawn by independent Bernoulli-$p$ sampling and $\calA_\Omega:=\sum_{k\in\Omega}\tens G_k\otimes\tens G_k $, then, with probability at least $1-\delta$, the following statements hold:
\begin{equation}\label{eq:full-sample-tangent-event-app}
        \norm{\calP_T(p^{-1}\calA_\Omega-\calA)\calP_T}\le\frac12,
\end{equation}
there exists $\tens W\in\range(\calA_\Omega)+\range(\calA^\perp)$ such that
\begin{align}
\norm{\calP_T(\tens W-\tens Q_\star)}_F
&\le\frac1{8N},
\qquad
\norm{\calP_{T^\perp}\tens W}_2\le\frac12,
\label{eq:det-certificate-bounds-app}\\
\norm{\calA_\Omega\tens W}_F
&\le C_{\rm W}\sqrt{\frac{s\log(N/\delta)}{p}}.
\label{eq:observed-certificate-bound-app}
\end{align}
where $C_{\rm W}>0$ is a numerical constant.
\end{lemma}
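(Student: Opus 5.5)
We prove Lemma~\ref{lem:completion-certificate-event} by the golfing scheme of Gross, adapted to the orthonormal fiber basis $\{\tens G_k\}$ as in the rectangular EMaC analysis \cite{ChenChi2014,CaiWangWei2019}. The argument has three parts: a restricted isometry on $T$, an iterative certificate construction, and a bound on the observed part of the certificate.

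\textbf{Step 1: restricted isometry on $T$.} We obtain \eqref{eq:full-sample-tangent-event-app} from the matrix Bernstein inequality. Write
\[
\calP_T(p^{-1}\calA_\Omega-\calA)\calP_T=\sum_{k\in S}(p^{-1}\zeta_k-1)\,\calP_T\tens G_k\otimes\calP_T\tens G_k .
\]
This is a sum of independent, centered, self-adjoint operators. Each summand has operator norm at most $p^{-1}\norm{\calP_T\tens G_k}_F^2\le 2\mu_1c_{A,B}s/(Np)$, by the first bound in Lemma~\ref{lem:kernel-bounds}. The variance is at most $p^{-1}\max_k\norm{\calP_T\tens G_k}_F^2\,\norm{\calP_T\calA\calP_T}\le 2\mu_1c_{A,B}s/(Np)$. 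Bernstein then gives the bound $1/2$ with probability at least $1-\delta/3$ once $p\gtrsim \mu_1c_{A,B}s\log(N/\delta)/N$.

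\textbf{Step 2: golfing construction.} Split $\Omega$ into $j_0\asymp\log N$ independent Bernoulli batches $\Omega_1,\dots,\Omega_{j_0}$ with rates $q$ satisfying $1-p=(1-q)^{j_0}$, so $q\gtrsim p/\log N$. Set $\tens F_0=\tens Q_\star$ and define
\[
\tens W=\calA^\perp\tens Q_\star+\sum_{j=1}^{j_0}q^{-1}\calA_{\Omega_j}\tens F_{j-1},\qquad \tens F_j=\calP_T(\calA-q^{-1}\calA_{\Omega_j})\calP_T\tens F_{j-1}.
\]
The $\calA^\perp$ term places $\tens W$ in $\range(\calA_\Omega)+\range(\calA^\perp)$, and $\calP_T(\tens W-\tens Q_\star)=-\tens F_{j_0}$. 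Applying Step 1 to each batch halves $\norm{\tens F_j}_F$ at every stage. Since $\norm{\tens Q_\star}_F=\sqrt s$, choosing $j_0\approx\log_2(8N\sqrt s)$ yields the first bound in \eqref{eq:det-certificate-bounds-app}.

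For $\norm{\calP_{T^\perp}\tens W}_2$, split into two pieces.
\begin{itemize}
\item The term $\calA^\perp\tens Q_\star$ cannot be handled crudely, so we pair it with the $\calA$ part. We write $\calP_{T^\perp}\tens W=\calP_{T^\perp}\sum_j(q^{-1}\calA_{\Omega_j}-\calA)\tens F_{j-1}$, using $\calP_{T^\perp}\tens Q_\star=0$ and $\sum_j\tens F_{j-1}$ telescoping against the $\calA^\perp$ term. This requires checking the identity $\calA^\perp\tens Q_\star+\calA\tens Q_\star=\tens Q_\star$ and tracking the residuals.
\item Each operator $(q^{-1}\calA_{\Omega_j}-\calA)\tens F$ is bounded in spectral norm by matrix Bernstein with random matrices $(q^{-1}\zeta_k-1)\inner{\tens G_k}{\tens F}\tens G_k$. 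Since $\norm{\tens G_k}_2=w(k)^{-1/2}$, the uniform bound is $q^{-1}\norm{\tens F}_{\calG,\infty}$ and the variance proxy is $q^{-1}\norm{\tens F}_{\calG,2}^2\cdot\norm{\cdot}$-type quantities. To make $\norm{\cdot}_{\calG,2}$ the right variance, note that $\sum_k|\inner{\tens G_k}{\tens F}|^2\tens G_k\tens G_k^*$ is diagonal with row sums $\sum_k|\inner{\tens G_k}{\tens F}|^2/w(k)$ over fibers meeting a row.
\end{itemize}
This gives, with $\log N$ factors,
\[
\norm{(q^{-1}\calA_{\Omega_j}-\calA)\tens F}_2\lesssim \sqrt{\frac{\log N}{q}}\norm{\tens F}_{\calG,2}+\frac{\log N}{q}\norm{\tens F}_{\calG,\infty}.
\]

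\textbf{Step 3: propagating the structural norms (the main obstacle).} We must show that $\norm{\tens F_j}_{\calG,\infty}$ and $\norm{\tens F_j}_{\calG,2}$ contract geometrically. We start from Lemma~\ref{lem:initial-structural-norms} for $\tens F_0=\tens Q_\star$.

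\begin{itemize}
\item For $\norm{\cdot}_{\calG,\infty}$, we use scalar Bernstein on $\inner{\tens G_b}{\calP_T(\calA-q^{-1}\calA_{\Omega})\tens F}/\sqrt{w(b)}$. The needed kernel bounds come from the second inequality in Lemma~\ref{lem:kernel-bounds}; the $\sqrt{w(b)/w(a)}$ weighting is exactly what makes this normalized norm closed.
\item For $\norm{\cdot}_{\calG,2}$, we use vector Bernstein on the columns $\zeta_a\inner{\tens G_a}{\tens F}\calP_T\tens G_a$. Their $\calG,2$ norms are controlled by the third bound of Lemma~\ref{lem:initial-structural-norms}, namely $\norm{\calP_T\tens P_a}_{\calG,2}^2\le10\mu_1c_{A,B}\Gamma_{A,B}s/N$, and this is where $\Gamma_{A,B}$ enters.
\end{itemize}

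The mixed terms produce recursions of the form
\[
\norm{\tens F_j}_{\calG,2}\le\tfrac12\norm{\tens F_{j-1}}_{\calG,2}+\sqrt{N/\log N}\,\norm{\tens F_{j-1}}_{\calG,\infty}\cdot(\text{small}),
\]
closed under the hypothesized lower bound on $p$ with $\log^2(N/\delta)$. I expect balancing these two coupled recursions to be the hard part. The combined sum then gives $\norm{\calP_{T^\perp}\tens W}_2\le1/2$.

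\textbf{Observed part and probability.} For \eqref{eq:observed-certificate-bound-app}, note that $\norm{q^{-1}\calA_{\Omega_j}\tens F_{j-1}}_F$ has expectation of order $q^{-1/2}\norm{\tens F_{j-1}}_F$. Scalar concentration makes this hold with high probability. Summing the geometric series with $\norm{\tens F_0}_F=\sqrt s$ and $q\asymp p/\log(N/\delta)$ gives $C_{\rm W}\sqrt{s\log(N/\delta)/p}$. Finally, a union bound over all $j_0$ stages and all events, each at level $\delta/(10j_0N)$, yields total probability at least $1-\delta$.
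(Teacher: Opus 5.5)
Your certificate is wrong as defined. With $\tens W=\calA^\perp\tens Q_\star+\sum_{j=1}^{j_0}q^{-1}\calA_{\Omega_j}\tens F_{j-1}$, neither identity you rely on holds. The iterates $\tens F_j$ lie in $T$, but for $j\ge1$ they are generally not in $\HankelSpace_{A,B}=\range(\calA)$. Hence $\calP_T\calA\tens F_j=\tens F_j-\calP_T\calA^\perp\tens F_j\ne\tens F_j$. Writing $q^{-1}\calP_T\calA_{\Omega_j}\tens F_{j-1}=\calP_T\calA\tens F_{j-1}-\tens F_j$ and summing gives
\[
\calP_T(\tens W-\tens Q_\star)=-\tens F_{j_0}-\sum_{j=1}^{j_0-1}\calP_T\calA^\perp\tens F_j .
\]
The extra terms are of the size of $\norm{\tens F_j}_F$, not $O(1/N)$.

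The normal part fails the same way, because the single term $\calA^\perp\tens Q_\star$ only recenters the first batch:
\[
\calP_{T^\perp}\tens W=\calP_{T^\perp}(q^{-1}\calA_{\Omega_1}-\calA)\tens F_0+\sum_{j\ge2}q^{-1}\calP_{T^\perp}\calA_{\Omega_j}\tens F_{j-1}.
\]
The later summands have conditional mean $\calP_{T^\perp}\calA\tens F_{j-1}=-\calP_{T^\perp}\calA^\perp\tens F_{j-1}\neq0$, so your centered Bernstein leakage bound does not apply to them.

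The repair, which is the paper's construction, is to add $\calA^\perp$ at every stage: $\tens W=\sum_{j=1}^{j_0}(q^{-1}\calA_{\Omega_j}+\calA^\perp)\tens F_{j-1}$. Since $q^{-1}\calA_{\Omega_j}+\calA^\perp=\calI-(\calA-q^{-1}\calA_{\Omega_j})$ and $\tens F_{j-1}\in T$, each summand has tangent part exactly $\tens F_{j-1}-\tens F_j$ and normal part exactly $\calP_{T^\perp}(q^{-1}\calA_{\Omega_j}-\calA)\tens F_{j-1}$. Membership in $\range(\calA_\Omega)+\range(\calA^\perp)$ is preserved, and the formula for $\calA_\Omega\tens W$ is unchanged.

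Second, Step 3 is left open, and that is exactly where the argument closes. The paper does not balance two coupled recursions. It contracts the single weighted seminorm $\Phi(\tens M)=\norm{\tens M}_{\calG,2}+\kappa\norm{\tens M}_{\calG,\infty}$ with $\kappa=\sqrt{N/(\mu_1c_{A,B}\Gamma_{A,B}s)}$. Insert exactly the vector and scalar Bernstein bounds you describe, and use $\kappa\,\mu_1c_{A,B}s/N=\sqrt{\mu_1c_{A,B}s/(N\Gamma_{A,B})}$ together with $\Gamma_{A,B}\ge1$. Every coefficient is then at most a constant times $\sqrt{x}$ or $x$, where $x=\mu_1c_{A,B}\Gamma_{A,B}s\log(N/\delta_b)/(qN)$. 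Hence $\Phi(\tens F_j)\le\frac12\Phi(\tens F_{j-1})$ once $q\gtrsim\mu_1c_{A,B}\Gamma_{A,B}s\log(N/\delta_b)/N$.

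Lemma~\ref{lem:initial-structural-norms} gives $\Phi(\tens Q_\star)\le2\sqrt{\mu_1c_{A,B}\Gamma_{A,B}s/N}$. Using $\norm{\tens F}_{\calG,\infty}\le\kappa^{-1}\Phi(\tens F)$, your leakage bounds sum geometrically to at most a constant times $\sqrt{x}+x$, which is $\le\frac12$. The $\log^2(N/\delta)$ arises from this logarithm combined with $q\ge p/j_0$ and $j_0\lesssim\log(N/\delta)$.

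Finally, the observed part needs no separate concentration. On the batch tangent event,
\[
\norm{q^{-1}\calA_{\Omega_j}\tens F_{j-1}}_F^2=q^{-1}\inner{\tens F_{j-1}}{q^{-1}\calA_{\Omega_j}\tens F_{j-1}}\le\tfrac32q^{-1}\norm{\tens F_{j-1}}_F^2 .
\]
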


\subsection{Proof of Proposition~\ref{prop:separationcondition}}
\label{subsec:proof-separationcondition}
\begin{proof}
For $s=1$, both Gram matrices equal their respective cardinalities, so $\mu_1=1$. Assume $s\ge2$ and set $\vect f_j=(\Delta q_1t_{1,j},\Delta q_2t_{2,j})^T/(2\pi)$. Their separation on the unit torus satisfies
\[
 q:=\min_{j\ne k}\min_{\vect m\in\mathbb Z^2}\|\vect f_j-\vect f_k-\vect m\|_\infty
 \ge\frac{\min\{\Delta q_1,\Delta q_2\}}{2\pi\sqrt2}\Delta.
\]
For $C\in\{A,B\}$, choose a translate $K_C$ of $\{0,\ldots,n_C-1\}^2$ inside $J_C$, where $n_C=2\lfloor L_C/2\rfloor\ge L_C/2$. The assumed separation implies $qn_C>8$ for $C_{\rm I}$ sufficiently large. Translation of the index set only multiplies the Vandermonde matrix by a diagonal unitary matrix. Therefore, by \cite[Theorem~4.2]{KunisNagelStrotmann2022}, $\tens V_{K_C}$ has full column rank and $\sigma_s(\tens V_{K_C})>0.9n_C$. Adding the remaining rows gives
\[
 \lambda_{\min}(\tens V_C^*\tens V_C)
 \ge\sigma_s(\tens V_{K_C})^2\ge L_C^2/5,
 \qquad C\in\{A,B\}.
\]
Both factors thus have full column rank, and substitution into \eqref{eq:pencil-frame-new} proves the claimed bound after increasing $C_{\rm I}$ if necessary.
\end{proof}

\subsection{Proof of Proposition~\ref{prop:music-resolution}}
\label{subsec:proof-music-resolution}
\begin{proof}
Assume first that $A$ contains a translate of $\{0,\ldots,s\}^2$; the case of $B$ follows by complex conjugation. Translation leaves the corresponding normalized one-sided MUSIC residual unchanged, so we may assume $\{0,\ldots,s\}^2\subset A$. Proposition~\ref{prop:separationcondition} gives uniform full column rank of $\tens V_A$ and $\tens V_B$ over the $\Delta$-separated configurations.

Fix $\calT=\{\vect t_1,\ldots,\vect t_s\}\subset\mathcal F$ and write $\vect z(\vect t)=(\ee^{-\ii\Delta q_1t_1},\ee^{-\ii\Delta q_2t_2})^T$. If $\vect t\notin\calT$, let $\vect z_1,\ldots,\vect z_{s+1}$ denote the $s$ source nodes and $\vect z(\vect t)$. For each $\ell$, choose affine polynomials $\ell_{\ell m}$ satisfying $\ell_{\ell m}(\vect z_m)=0$ and $\ell_{\ell m}(\vect z_\ell)\ne0$, and set
\[
 \pi_\ell(\vect z)=\prod_{m\ne\ell}\frac{\ell_{\ell m}(\vect z)}{\ell_{\ell m}(\vect z_\ell)},\qquad \pi_\ell(\vect z_m)=\delta_{\ell m}.
\]
Each $\pi_\ell$ has total degree at most $s$, hence its coefficient vector is supported on $\{0,\ldots,s\}^2$. Therefore $[\vect v_A(\vect t_1)\ \cdots\ \vect v_A(\vect t_s)\ \vect v_A(\vect t)]$ has rank $s+1$, so $\calR(\vect t)>0$ for every $\vect t\in\mathcal F\setminus\calT$. Fix $j$. As above, choose a polynomial $\psi_j$ of total degree at most $s-1$ such that $\psi_j(\vect z(\vect t_m))=0$ for $m\ne j$ and $\psi_j(\vect z(\vect t_j))\ne0$. Suppose
\[
 \tens V_A\vect a+b_1\partial_{t_1}\vect v_A(\vect t_j)+b_2\partial_{t_2}\vect v_A(\vect t_j)=0.
\]
Let $\vect\gamma_{\ell,j}$ be the coefficient vector of $(z_\ell-z_{\ell,j})\psi_j$, $\ell=1,2$. Since these polynomials vanish at every source node, left multiplication by $\vect\gamma_{\ell,j}^T$ gives
\[
 0=-\ii\Delta q_1z_{1,j}\psi_j(\vect z(\vect t_j))b_1,\qquad
 0=-\ii\Delta q_2z_{2,j}\psi_j(\vect z(\vect t_j))b_2.
\]
Hence $b_1=b_2=0$, and then $\vect a=0$ by the full column rank of $\tens V_A$. Thus $[\tens V_A\ \partial_{t_1}\vect v_A(\vect t_j)\ \partial_{t_2}\vect v_A(\vect t_j)]$ has rank $s+2$.

Set $\vect g_A(\vect t)=|A|^{-1/2}(\tens I-\tens U\tens U^*)\vect v_A(\vect t)$. Since $\ker(\tens I-\tens U\tens U^*)=\range(\tens V_A)$, the preceding rank condition implies that $D\vect g_A(\vect t_j)$ has real rank two. As $\vect g_A(\vect t_j)=0$, Taylor's theorem gives $r_j,\kappa_j>0$ such that $\|\vect g_A(\vect t)\|_2\ge\kappa_jd_{\rm per}(\vect t,\vect t_j)$ whenever $d_{\rm per}(\vect t,\vect t_j)\le r_j$. Since $\calR(\vect t)\ge\|\vect g_A(\vect t)\|_2$, the same local lower bound holds for $\calR$.

For the compactness argument, include configurations on the boundary of $\mathcal F$; the preceding rank arguments remain valid there. The ordered $\Delta$-separated configurations form a compact subset of $\mathcal F^s$. By Proposition~\ref{prop:separationcondition} and continuity, the preceding radii and lower bounds may therefore be chosen uniformly, say $r>0$ and $c>0$, with $r\le\Delta/4$. On the compact set of configuration--test-point pairs satisfying $d_{\rm per}(\vect t,\calT)\ge r$, the first part of the proof gives $\calR(\vect t)>0$, and hence its minimum $\gamma$ is positive. Taking $c_{\rm M}=\min\{c,4\gamma/\Delta\}$ proves \eqref{eq:music-resolution-bound}.
\end{proof}

\subsection{Proof of Proposition~\ref{prop:rectangular-decomposition}}
\label{subsec:proof-rectangular-decomposition}

\begin{proof}
For each pair $(u,v)$, write $w_{u,v}(k):=\#\{(\alpha,\beta)\in A_u\times B_v:\alpha+\beta=k\}$ and $s_{u,v}(k):=\sum_{\alpha\in A_u,\,\beta\in B_v,\,\alpha+\beta=k}M_{\alpha,\beta}$. Since the decompositions of $A$ and $B$ are disjoint, we have
\[
w(k)=\sum_{u,v}w_{u,v}(k),\qquad
\sum_{\alpha+\beta=k}M_{\alpha,\beta}=\sum_{u,v}s_{u,v}(k).
\]
For each fixed $k$, weighted Cauchy--Schwarz gives
\begin{align*}
\frac{\bigl|\sum_{\alpha+\beta=k}M_{\alpha,\beta}\bigr|^2}{w(k)^2}
&=\frac{\bigl|\sum_{u,v}s_{u,v}(k)\bigr|^2}{w(k)^2}
\le \frac1{w(k)}
\sum_{u,v:w_{u,v}(k)>0}\frac{|s_{u,v}(k)|^2}{w_{u,v}(k)}\\
&\le
\sum_{u,v:w_{u,v}(k)>0}\frac{|s_{u,v}(k)|^2}{w_{u,v}(k)^2},
\end{align*}
where the last inequality follows from $w(k)\geq w_{u,v}(k)$. Summing over $k$ gives
\begin{align*}
\norm{\tens M}_{\calG,2}^2
&\le \sum_{u=1}^{R_A}\sum_{v=1}^{R_B}
\norm{\tens M|_{A_u\times B_v}}_{\calG_{u,v},2}^2
\le 576(1+\log N)^2
\sum_{u=1}^{R_A}\sum_{v=1}^{R_B}c_{u,v}^2
\norm{\tens M|_{A_u\times B_v}}_{\infty,2}^2\\
&\le 576\log^2(eN)
\left(\sum_{u=1}^{R_A}\sum_{v=1}^{R_B}c_{u,v}^2\right)
\norm{\tens M}_{\infty,2}^2.
\end{align*}
The second inequality follows from Lemma~\ref{lem:rectangular-fiber-bound}, and the last uses $\norm{\tens M|_{A_u\times B_v}}_{\infty,2}\le\norm{\tens M}_{\infty,2}$. The result follows from \eqref{eq:fiber-parameter}.
\end{proof}

\subsection{Proof of Theorem~\ref{thm:aperture-pencil-global}}
\label{subsec:proof-aperture-pencil-global}
\begin{proof}
The case $p=1$ is immediate. Otherwise, set $\delta=N^{-10}$. Equation~\eqref{eq:aperture-global-sample} and Lemma~\ref{lem:completion-certificate-event} imply that, with probability at least $1-N^{-10}$, event~\eqref{eq:full-sample-tangent-event-app} holds and there is a matrix $\tens W\in\range(\calA_{\Omega})+\range(\calA^\perp)$ satisfying \eqref{eq:det-certificate-bounds-app}. We prove uniqueness on this event. Let $\tens E=\calH_{A,B}(\vect h)$ be any feasible perturbation. Then $\tens E\in\range(\calA)$ and $\calA_{\Omega}\tens E=0$, hence $\calK\tens E=0$, where $\calK=p^{-1}\calA_{\Omega}+\calA^\perp$. By \eqref{eq:full-sample-tangent-event-app},
\[
\frac12\norm{\calP_T\tens E}_F^2
\le \inner{\calP_T\tens E}{\calK\calP_T\tens E}
=-\operatorname{Re}\inner{\calP_T\tens E}{\calK\calP_{T^\perp}\tens E}
\le p^{-1}\norm{\calP_T\tens E}_F\norm{\calP_{T^\perp}\tens E}_F,
\]
and thus $\norm{\calP_T\tens E}_F\le2p^{-1}\norm{\calP_{T^\perp}\tens E}_F$. Choose $\tens F\in T^\perp$, $\norm{\tens F}_2\le1$, such that $\operatorname{Re}\inner{\tens F}{\calP_{T^\perp}\tens E} =\norm{\calP_{T^\perp}\tens E}_*$. Since $\tens Q_\star+\tens F\in\partial\norm{\tens M_\star}_*$ and $\inner{\tens W}{\tens E}=0$, we get
\begin{align*}
\norm{\tens M_\star+\tens E}_*
&\ge \norm{\tens M_\star}_*
+\operatorname{Re}\inner{\tens Q_\star+\tens F}{\tens E}\\
&\ge \norm{\tens M_\star}_*
-\norm{\calP_T(\tens W-\tens Q_\star)}_F\norm{\calP_T\tens E}_F
+\bigl(1-\norm{\calP_{T^\perp}\tens W}_2\bigr)\norm{\calP_{T^\perp}\tens E}_* \\
&\ge \norm{\tens M_\star}_*
+\left(1-\frac12-\frac1{4Np}\right)\norm{\calP_{T^\perp}\tens E}_*
\ge \norm{\tens M_\star}_*+\frac14\norm{\calP_{T^\perp}\tens E}_*,
\end{align*}
where the second inequality follows from $\inner{\tens W}{\tens E}=0$, and the last two inequalities follow from \eqref{eq:det-certificate-bounds-app}, $\norm{\calP_T\tens E}_F\le2p^{-1}\norm{\calP_{T^\perp}\tens E}_F\le 2p^{-1}\norm{\calP_{T^\perp}\tens E}_*$, and the sampling condition, after enlarging $C_2$ if necessary, gives $p\ge N^{-1}$. Thus every feasible perturbation with $\calP_{T^\perp}\tens E\ne0$ strictly increases the objective. If $\calP_{T^\perp}\tens E=0$, then the tangent estimate gives $\tens E=0$. Since $\calH_{A,B}$ is injective on $\C^S$, we have $\vect h=0$. This proves uniqueness.
\end{proof}
\subsection{Proof of Theorem~\ref{thm:noisy-aperture-completion}}
\label{subsec:proof-noisy-aperture-completion}
\begin{proof}
Set $\vect h=\widehat{\vect x}-\vect x$, $\tens E=\calH_{A,B}(\vect h)$, and $w_{\max}=\max_{k\in S}w(k)$. Feasibility and optimality imply
\begin{align}
 \|\calP_{\Omega}\vect h\|_2&\le2\eps,
 \label{eq:noisy-feasibility-2norm}\\
 \|\tens M_\star+\tens E\|_*&\le\|\tens M_\star\|_*.
\end{align}
Consequently,
\begin{equation}\label{eq:noisy-observed-lifted-2norm}
 \|\calA_{\Omega}\tens E\|_F^2
 =\sum_{k\in\Omega}w(k)|h_k|^2
 \le4w_{\max}\eps^2.
\end{equation}
On the event in Lemma~\ref{lem:completion-certificate-event} with $\delta=N^{-10}$, let $\tens W$ be the stated certificate. The nuclear-norm subgradient inequality gives
\[
 0\ge\operatorname{Re}\inner{\tens Q_\star}{\calP_T\tens E}
 +\|\calP_{T^\perp}\tens E\|_*.
\]
Using the tangent and normal parts of $\tens W$ and then \eqref{eq:det-certificate-bounds-app}, we obtain
\begin{align}
 \|\calP_{T^\perp}\tens E\|_*
 &\le |\inner{\tens W}{\tens E}|
 +\frac{1}{8N}\|\calP_T\tens E\|_F
 +\frac12\|\calP_{T^\perp}\tens E\|_*.
 \label{eq:noisy-normal-2norm}
\end{align}
Because $\tens E\in\range(\calA)$ and $\tens W\in\range(\calA_{\Omega})+\range(\calA^\perp)$,
\begin{align*}
 |\inner{\tens W}{\tens E}|
 =|\inner{\calA_{\Omega}\tens W}
 {\calA_{\Omega}\tens E}|
 \le2C_{\rm W}\sqrt{11p^{-1}s w_{\max}\log N}\,\eps,
\end{align*}
where \eqref{eq:noisy-observed-lifted-2norm} and \eqref{eq:observed-certificate-bound-app} were used. Thus
\begin{equation}\label{eq:noisy-normal-solved}
 \|\calP_{T^\perp}\tens E\|_*
 \le4C_{\rm W}\sqrt{11p^{-1}s w_{\max}\log N}\,\eps
 +\frac{1}{4N}\|\calP_T\tens E\|_F.
\end{equation}

Let $\calK=p^{-1}\calA_{\Omega}+\calA^\perp$. The tangent-space concentration event gives
\begin{align*}
 \|\calP_T\tens E\|_F
 \le\sqrt2\|\calK^{1/2}\calP_T\tens E\|_F
 \le\sqrt{2p^{-1}}\left(
 \|\calA_{\Omega}\tens E\|_F
 +\|\calP_{T^\perp}\tens E\|_*\right).
\end{align*}
Indeed, $\calK\preceq p^{-1}\calI$ and $\|\calK^{1/2}\tens E\|_F =p^{-1/2}\|\calA_{\Omega}\tens E\|_F$. Substitution of \eqref{eq:noisy-observed-lifted-2norm} and \eqref{eq:noisy-normal-solved}, followed by absorption of the term $(4N)^{-1}\sqrt{2/p}\|\calP_T\tens E\|_F$, yields
\begin{equation}\label{eq:noisy-tangent-2norm}
 \|\calP_T\tens E\|_F
 \le \widetilde C_3p^{-1}\sqrt{s w_{\max}\log N}\,\eps.
\end{equation}
Here the sampling condition, with $C_2$ enlarged if necessary, implies $p\ge N^{-1}$ and makes the absorption uniform for $N\ge2$. Combining \eqref{eq:noisy-normal-solved} and \eqref{eq:noisy-tangent-2norm}, and using $w_{\max}\le\min\{|A|,|B|\}$, gives
\[
 \|\tens E\|_2
 \le\|\calP_T\tens E\|_F+\|\calP_{T^\perp}\tens E\|_*
 \le C_3p^{-1}\sqrt{s\min\{|A|,|B|\}\log N}\,\eps.
\]
\end{proof}

\subsection{Proof of Theorem~\ref{thm:completion-localization}}
\label{subsec:proof-completion-localization}
\begin{proof}
Let $\tens M_\star=\tens U\tens\Sigma\tens V^*$ be a compact SVD, and set
$$
\tens E=\calH_{A,B}(\widehat{\vect x}-\vect x),
\qquad
\varepsilon_H=\frac{\norm{\tens E}_2}{\sigma_s(\tens M_\star)}.
$$
 By \eqref{eq:pencil-factor-new} and \eqref{eq:pencil-frame-new},
\begin{equation}\label{eq:signal-gap-lower}
 \sigma_s(\tens M_\star)
 \ge\frac{\sqrt{|A||B|}}{\mu_1}\min_j|\widetilde c_j|,
 \qquad
 \varepsilon_H\le
 \frac{\mu_1\|\tens E\|_2}
 {\sqrt{|A||B|}\min_j|\widetilde c_j|}.
\end{equation}
Set $\calQ=\calR^2$ and $\widehat{\calQ}=\widehat{\calR}^2$. Since $\calR(\vect t_j)=0$, we have $\nabla\calQ(\vect t_j)=0$. The true locations lie in the interior of $\mathcal F$, so for every $\vect u\in\R^2$, \eqref{eq:music-resolution-bound} and Taylor's formula along $\vect t_j+\theta\vect u$ give $\vect u^T\nabla^2\calQ(\vect t_j)\vect u\ge2c_{\rm M}^2\|\vect u\|_2^2$. Choose $r>0$ sufficiently small that the sets $\calU_j=\mathcal F\cap\{\vect t:\|\vect t-\vect t_j\|_2<r\}$ are disjoint, $\calQ$ is strongly convex on each convex set $\overline{\calU_j}$, and $d_{\rm per}(\vect t,\vect t_j)=\|\vect t-\vect t_j\|_2<\Delta/4$ for $\vect t\in\calU_j$.

By compactness and \eqref{eq:music-resolution-bound}, $\gamma:=\min_{\mathcal F\setminus\bigcup_j\calU_j}\calQ>0$. Since the squared residual depends continuously in $C^2(\mathcal F)$ on the two signal-space projectors, there is $\eta_{\rm M}>0$ such that projector perturbations of norm at most $\eta_{\rm M}$ preserve strong convexity on each $\overline{\calU_j}$ and give $\|\widehat{\calQ}-\calQ\|_\infty<\gamma/3$. Then $\widehat{\calQ}(\vect t_j)<\gamma/3$, whereas $\widehat{\calQ}\ge2\gamma/3$ outside $\bigcup_j\calU_j$. Thus the unique minimizer on each $\overline{\calU_j}$ lies in $\calU_j$, is a local minimum relative to $\mathcal F$, and has value below every local minimum outside these neighborhoods. Strong convexity excludes other local minima in each $\calU_j$, so these are precisely the $s$ smallest local minima. The same statements hold for $\widehat{\calR}$ because squaring is strictly increasing on $[0,\infty)$.

After decreasing the admissible constant $c_{\rm M}$, we may assume $c_{\rm M}\Delta\le\min\{1,\eta_{\rm M}\}$. Condition~\eqref{eq:stablecondition} then gives $\varepsilon_H\le c_{\rm M}\Delta/C_4\le1/C_4$. Wedin's theorem \cite{Wedin1972,StewartSun1990} gives a numerical constant $c_4>0$ such that, for $C_4$ sufficiently large, we have
\[
 \max\left\{
 \|\widehat{\tens U}\widehat{\tens U}^*-\tens U\tens U^*\|_2,
 \|\widehat{\tens V}\widehat{\tens V}^*-\tens V\tens V^*\|_2
 \right\}\le c_4\varepsilon_H.
\]
Increasing $C_4$ if necessary makes this error at most $c_{\rm M}\Delta\le\eta_{\rm M}$. Hence the $s$ minima selected in Algorithm~\ref{alg:general-domain-music} can be labeled so that $\widehat{\vect t}_j\in\calU_j$ and $\widehat{\calR}(\widehat{\vect t}_j)\le\widehat{\calR}(\vect t_j)$.

Since the steering vectors have norms $\sqrt{|A|}$ and $\sqrt{|B|}$, the reverse triangle inequality for the stacked residuals yields, after increasing $c_4$ if necessary,
\begin{equation}\label{eq:music-residual-perturbation}
 \sup_{\vect t\in\mathcal F}
 |\widehat{\calR}(\vect t)-\calR(\vect t)|
 \le c_4\varepsilon_H=:\delta.
\end{equation}
Since $\calR(\vect t_j)=0$, we have $\widehat{\calR}(\widehat{\vect t}_j)\le\delta$ and therefore $\calR(\widehat{\vect t}_j)\le2\delta$. Moreover, $\calU_j\subset\{\vect t:d_{\rm per}(\vect t,\vect t_j)<\Delta/4\}$, so the linear branch of \eqref{eq:music-resolution-bound} gives
\[
 d_{\rm per}(\widehat{\vect t}_j,\vect t_j)
 \le\frac{2c_4}{c_{\rm M}}\varepsilon_H.
\]
Substitution of \eqref{eq:signal-gap-lower} proves \eqref{eq:completion-location-final}.
\end{proof}

\subsection{Proofs of the technical lemmas}
\label{subsec:proofs-technical-lemmas}

\subsubsection{Proof of Lemma~\ref{lem:leverage-frame}}
\label{subsec:proof-leverage-frame}
\begin{proof}
Since $\range(\tens U)=\range(\tens V_A)$, $\tens U\tens U^*=\tens V_A(\tens V_A^*\tens V_A)^{-1}\tens V_A^*$. Every row of $\tens V_A$ has squared norm $s$, and therefore $\|\tens U^*\vect e_\alpha\|_2^2\le s/\lambda_{\min}(\tens V_A^*\tens V_A)\le\mu_1s/|A|$. Since the right singular space is $\range(\overline{\tens V_B})$, the same argument gives the second inequality in \eqref{eq:leverage-frame-new}.
\end{proof}

\subsubsection{Proof of Lemma~\ref{lem:rectangular-fiber-bound}}
\label{subsec:proof-rectangular-fiber-bound}
\begin{proof}
Write $a_i:=|I_i|$, $b_i:=|J_i|$, and $m_i:=\min\{a_i,b_i\}$. By translation invariance, we may assume $I_i=\{0,\ldots,a_i-1\}$ and $J_i=\{0,\ldots,b_i-1\}$. Then
\[
w_i(t):=w_{I_i,J_i}(t)=\min\{t+1,a_i,b_i,a_i+b_i-1-t\},\qquad 0\le t\le a_i+b_i-2.
\]
For each nonempty monotonicity part $\sigma$ of $w_i$ and each integer $r\ge0$, define
\[
\mathcal L_{i,r}^{\sigma}
 :=\{t\text{ in part }\sigma:2^{r-1}<w_i(t)\le2^r\}
\]
and, for $D\subset I_i+J_i$, define
\[
\operatorname{Row}_i(D)
 :=\{u\in I_i:\text{ there are }t\in D\text{ and }v\in J_i
 \text{ with }u+v=t\}.
\]
Then, for every nonempty $\mathcal L_{i,r}^{\sigma}$, we have
\begin{equation}\label{eq:one-dimensional-row-incidence}
|\operatorname{Row}_i(\mathcal L_{i,r}^{\sigma})|\le 2c_{\rm rec}2^r.
\end{equation}
In fact, if $\mathcal L_{i,r}^{\sigma}$ lies in the increasing part, then $\operatorname{Row}_i(\mathcal L_{i,r}^{\sigma})\subset\{0,\ldots,2^r-1\}$; if it lies in the decreasing part, then $\operatorname{Row}_i(\mathcal L_{i,r}^{\sigma})$ is contained in a terminal interval of length at most $2^{r+1}$; if it lies in the plateau, then $2^{r-1}<m_i\le2^r$ and $|\operatorname{Row}_i(\mathcal L_{i,r}^{\sigma})|\le a_i\le c_{\rm rec}m_i\le c_{\rm rec}2^r$.

Now fix $\mathcal B:=\mathcal L_{1,r_1}^{\sigma_1}\times\mathcal L_{2,r_2}^{\sigma_2}$. For every $k\in\mathcal B$, $w(k)=w_1(k_1)w_2(k_2)>2^{r_1+r_2-2}$, and all corresponding matrix entries lie in rows indexed by $\operatorname{Row}_1(\mathcal L_{1,r_1}^{\sigma_1})\times\operatorname{Row}_2(\mathcal L_{2,r_2}^{\sigma_2})$, whose cardinality is at most $4c_{\rm rec}^2 2^{r_1+r_2}$ by \eqref{eq:one-dimensional-row-incidence}. Hence
\begin{align*}
\sum_{k\in\mathcal B}\frac{1}{w(k)^2 }\Big|\sum_{\alpha+\beta=k}M_{\alpha,\beta}\Big|^2
&\le \sum_{k\in\mathcal B}\frac1{w(k)}\sum_{\alpha+\beta=k}|M_{\alpha,\beta}|^2
\le 2^{2-r_1-r_2}\sum_{\substack{k\in\mathcal B\\ \alpha+\beta=k}}|M_{\alpha,\beta}|^2 \\
&\le 2^{2-r_1-r_2}\,4c_{\rm rec}^2 2^{r_1+r_2}\norm{\tens M}_{\infty,2}^2
=16c_{\rm rec}^2\norm{\tens M}_{\infty,2}^2,
\end{align*}
where the first inequality follows from Cauchy--Schwarz on each fiber, the second follows from $w(k)>2^{r_1+r_2-2}$, and the third follows from the disjointness of the fibers and the preceding row count. There are at most $9$ choices of $(\sigma_1,\sigma_2)$ and at most $4(1+\log|A_0+B_0|)^2$ nonempty dyadic level pairs. Summing over all classes gives
\[
\norm{\tens M}_{\calG,2}^2
\le 576c_{\rm rec}^2(1+\log|A_0+B_0|)^2\norm{\tens M}_{\infty,2}^2,
\]
which proves \eqref{eq:rect-fiber-energy}.
\end{proof}

\subsubsection{Proof of Lemma~\ref{lem:kernel-bounds}}
\label{subsec:proof-kernel-bounds}

\begin{proof}
Recall $\tens G_k=w(k)^{-1/2}\sum_{\alpha+\beta=k}\vect e_\alpha \vect e_\beta^{*}$. Since the summands have disjoint supports, we have
\begin{align*}
\norm{\tens U\tens U^*\tens G_k}_F^2
&=\frac1{w(k)}\sum_{\alpha+\beta=k}
  \norm{\tens U^*\vect e_\alpha}_2^2
 \le \frac{\mu_1s}{|A|},\\
\norm{\tens G_k\tens V\tens V^*}_F^2
&=\frac1{w(k)}\sum_{\alpha+\beta=k}
  \norm{\tens V^*\vect e_\beta}_2^2
 \le \frac{\mu_1s}{|B|}.
\end{align*}
These estimates follow from \Cref{lem:leverage-frame}. By \eqref{eq:tangent-projection-formula-app}, we have
\[
\norm{\calP_T\tens G_k}_F^2
\le \norm{\tens U\tens U^*\tens G_k}_F^2
+\norm{\tens G_k\tens V\tens V^*}_F^2
\le \mu_1s\left(\frac1{|A|}+\frac1{|B|}\right)
\le \frac{2\mu_1s c_{A,B}}{N},
\]
which proves the first inequality in \eqref{eq:tangent-atom-bound-app}. For the kernel estimate, expanding \eqref{eq:tangent-projection-formula-app} gives
\begin{align*}
|\inner{\tens G_b}{\calP_T\tens G_a}|
&\le |\inner{\tens G_b}{\tens U\tens U^*\tens G_a}|
+|\inner{\tens G_b}{\tens G_a\tens V\tens V^*}|
+|\inner{\tens G_b}{\tens U\tens U^*\tens G_a\tens V\tens V^*}| \\
&\le \sqrt{\frac{w(b)}{w(a)}}
\left[
\max_{\alpha,\alpha'}|(\tens U\tens U^*)_{\alpha,\alpha'}|
+\max_{\beta,\beta'}|(\tens V\tens V^*)_{\beta,\beta'}|
+\frac{\mu_1s}{\sqrt{|A||B|}}
\right]\\
&\le \sqrt{\frac{w(b)}{w(a)}}\mu_1s
\left(\frac1{|A|}+\frac1{|B|}+\frac1{\sqrt{|A||B|}}\right)\\
&\le 3\sqrt{\frac{w(b)}{w(a)}}
\frac{\mu_1s c_{A,B}}{N}.
\end{align*}
For the first two terms, each nonzero entry of $\tens G_b$ meets at most one nonzero entry in the corresponding row or column of $\tens G_a$, and
\[
|(\tens U\tens U^*)_{\alpha,\alpha'}|\le\frac{\mu_1s}{|A|},
\qquad
|(\tens V\tens V^*)_{\beta,\beta'}|\le\frac{\mu_1s}{|B|}.
\]
For the last term,
\[
|\vect e_\alpha^*\tens U\tens U^*\tens G_a\tens V\tens V^*\vect e_\beta|
\le\frac{\mu_1s}{\sqrt{|A||B|w(a)}}.
\]
Summing over the $w(b)$ nonzero entries of $\tens G_b$ and using
\[
\max\bigl\{|A|^{-1},|B|^{-1},(|A||B|)^{-1/2}\bigr\}
\le \frac{c_{A,B}}{N}
\]
proves the second inequality.
\end{proof}

\subsubsection{Proof of Lemma~\ref{lem:initial-structural-norms}}
\label{subsec:proof-initial-structural-norms}
\begin{proof}
For every matrix entry, we have
\[
        |(\tens{Q}_\star)_{\alpha,\beta}|
        \le\norm{\tens{U}^*\vect{e}_\alpha}_2
             \norm{\tens{V}^*\vect{e}_\beta}_2
        \le\frac{\mu_1s}{\sqrt{|A||B|}}
        \le\frac{\mu_1c_{A,B}s}{N}.
\]
Since $\frac{|\inner{\tens{G}_k}{\tens{Q}_\star}|}{\sqrt{w(k)}} =\left|\frac1{w(k)} \sum_{\alpha+\beta=k}(\tens{Q}_\star)_{\alpha,\beta}\right|$, this proves the first inequality in \eqref{eq:Q-app}.  In addition,
\[
\max_\alpha\norm{\vect e_\alpha^*\tens Q_\star}_2^2
=\max_\alpha\norm{\vect e_\alpha^*\tens U}_2^2
\le\frac{\mu_1s}{|A|}.
\]
The analogous column bound is $\mu_1s/|B|$. Therefore $\norm{\tens{Q}_\star}_{\infty,2}^2 \le\mu_1c_{A,B}s/N$. By the definition of $\Gamma_{A,B}$, this gives the second inequality in \eqref{eq:Q-app}. For the last inequality in \eqref{eq:Q-app}, \eqref{eq:tangent-projection-formula-app} gives, for every row index $\alpha$,
\begin{align*}
\norm{\vect e_\alpha^*\tens U\tens U^*\tens P_a}_2
\le\norm{\tens U^*\vect e_\alpha}_2,\quad
\norm{\vect e_\alpha^*\tens P_a\tens V\tens V^*}_2
\le\max_{\beta\in B}\norm{\tens V^*\vect e_\beta}_2,\quad
\norm{\vect e_\alpha^*\tens U\tens U^*\tens P_a\tens V\tens V^*}_2
\le\norm{\tens U^*\vect e_\alpha}_2.
\end{align*}
The second inequality uses the fact that a row of $\tens P_a$ is either zero or a standard basis row. Hence
\[
        \max_\alpha
        \norm{\vect{e}_\alpha^*\calP_T\tens{P}_a}_2^2
        \le 5\mu_1s\left(\frac1{|A|}+\frac1{|B|}\right)
        \le\frac{10\mu_1c_{A,B}s}{N}.
\]
The corresponding column bound follows by symmetry. Thus $\norm{\calP_T\tens{P}_a}_{\infty,2}^2 \le 10\mu_1c_{A,B}s/N$, and we complete the proof by the definition of $\Gamma_{A,B}$.
\end{proof}

\subsubsection{Proof of Lemma~\ref{lem:completion-certificate-event}}
\label{subsec:proof-completion-certificate-event}
\begin{proof}
We follow the golfing-scheme proof of \cite{ChenChi2014}. We first record three estimates for one independent Bernoulli batch. Fix $0<\delta_b<1$, let $\Omega'$ be sampled with probability $p_{\rm b}$, and write $\zeta_k=\one_{\{k\in\Omega'\}}$. Define
\[
        \mathcal X_k:=\left(\zeta_k/p_{\rm b}-1\right)
        (\calP_T\tens G_k)\otimes(\calP_T\tens G_k).
\]
Then $\calP_T(p_{\rm b}^{-1}\calA_{\Omega'}-\calA)\calP_T =\sum_{k\in S}\mathcal X_k$. Since $|A|,|B|\le N$, the ambient dimensions in the matrix Bernstein bounds below contribute only numerical multiples of $\log(N/\delta_b)$. Lemma~\ref{lem:kernel-bounds} gives
\[
 \|\mathcal X_k\|\le\frac{2\mu_1c_{A,B}s}{p_{\rm b}N},
 \qquad
 \left\|\sum_{k\in S}\mathbb E\mathcal X_k^2\right\|
 \le\frac{2\mu_1c_{A,B}s}{p_{\rm b}N}.
\]
Matrix Bernstein \cite[Theorem~1.4]{Tropp2012} therefore gives, with probability at least $1-\delta_b$,
\begin{equation}\label{eq:batch-tangent-concentration}
 \|\calP_T(p_{\rm b}^{-1}\calA_{\Omega'}-\calA)\calP_T\|\le\frac12,
\end{equation}
provided $p_{\rm b}\ge c_1\mu_1c_{A,B}s\log(N/\delta_b)/N$ for a numerical constant $c_1>0$. Fix $\tens M\in T$ independent of $\Omega'$, and set
\[
        \tens H:=\calP_T(\calA-p_{\rm b}^{-1}\calA_{\Omega'})\tens M
        =\sum_{a\in S}\left(1-\zeta_a/p_{\rm b}\right)
        \inner{\tens G_a}{\tens M}\,\calP_T\tens G_a .
\]
Define the mixed seminorm
\[
        \Phi(\tens M):=\norm{\tens M}_{\calG,2}
        +\sqrt{\frac{N}{\mu_1c_{A,B}\Gamma_{A,B}s}}
        \norm{\tens M}_{\calG,\infty}.
\]
For the $\calG,2$-part, Lemma~\ref{lem:initial-structural-norms} gives $\norm{\calP_T\tens G_a}_{\calG,2}^2 =\frac{1}{w(a)}\norm{\calP_T\tens P_a}_{\calG,2}^2 \le \frac{10\mu_1c_{A,B}\Gamma_{A,B}s}{Nw(a)}$. Define the $|S|\times1$ random vector
\[
\vect X_a:=\left(1-\frac{\zeta_a}{p_{\rm b}}\right)
\inner{\tens G_a}{\tens M}
\left(\frac{\inner{\tens G_k}{\calP_T\tens G_a}}{\sqrt{w(k)}}\right)_{k\in S}.
\]
Then $\sum_{a\in S}\vect X_a =(\inner{\tens G_k}{\tens H}/\sqrt{w(k)})_{k\in S}$, and hence $\|\sum_{a\in S}\vect X_a\|_2=\|\tens H\|_{\calG,2}$. Thus
\[
        \|\vect X_a\|_2
        \le \frac{\sqrt{10}}{p_{\rm b}}
        \sqrt{\frac{\mu_1c_{A,B}\Gamma_{A,B}s}{N}}
        \norm{\tens M}_{\calG,\infty},
        \qquad
        \sum_{a\in S}\mathbb E\|\vect X_a\|_2^2
        \le \frac{10\mu_1c_{A,B}\Gamma_{A,B}s}{p_{\rm b}N}
        \norm{\tens M}_{\calG,2}^2.
\]
Matrix Bernstein \cite[Theorem~1.6]{Tropp2012} gives, for a numerical constant $c_2>0$, with probability at least $1-\delta_b$,
\[
        \norm{\tens H}_{\calG,2}
        \le c_2\sqrt{\frac{\mu_1c_{A,B}\Gamma_{A,B}s\log(N/\delta_b)}{p_{\rm b}N}}
        \norm{\tens M}_{\calG,2}
        +c_2\frac{\log(N/\delta_b)}{p_{\rm b}}
        \sqrt{\frac{\mu_1c_{A,B}\Gamma_{A,B}s}{N}}
        \norm{\tens M}_{\calG,\infty}.
\]
For the $\calG,\infty$ part, Lemma~\ref{lem:kernel-bounds} gives, for all $a,b\in S$, $\frac{\abs{\inner{\tens G_b}{\calP_T\tens G_a}}}{\sqrt{w(b)}} \le\frac{3\mu_1c_{A,B}s}{N\sqrt{w(a)}}$. Thus, for each fixed $b$, the scalar Bernstein summands in $\inner{\tens G_b}{\tens H}/\sqrt{w(b)}$ obey
\begin{align*}
   \abs{\left(1-\frac{\zeta_a}{p_{\rm b}}\right)
        \inner{\tens G_a}{\tens M}
        \frac{\inner{\tens G_b}{\calP_T\tens G_a}}{\sqrt{w(b)}}}
        &\le \frac{3\mu_1c_{A,B}s}{p_{\rm b}N}
        \norm{\tens M}_{\calG,\infty},\\
        \sum_{a\in S}\mathbb E\abs{\left(1-\frac{\zeta_a}{p_{\rm b}}\right)
        \inner{\tens G_a}{\tens M}
        \frac{\inner{\tens G_b}{\calP_T\tens G_a}}{\sqrt{w(b)}}}^2
        &\le \frac{9\mu_1^2c_{A,B}^2s^2}{p_{\rm b}N^2}
        \norm{\tens M}_{\calG,2}^2. 
\end{align*}
Applying scalar Bernstein \cite[Theorem~7.30]{FoucartRauhut2013} to the real and imaginary parts, followed by a union bound over $b\in S$, gives with probability at least $1-\delta_b$,
\[
        \norm{\tens H}_{\calG,\infty}
        \le c_2\frac{\mu_1c_{A,B}s}{N}
        \sqrt{\frac{\log(N/\delta_{b})}{p_{\rm b}}}\norm{\tens M}_{\calG,2}
        +c_2\frac{\mu_1c_{A,B}s\log(N/\delta_{b})}{p_{\rm b}N}
        \norm{\tens M}_{\calG,\infty}.
\]
Thus, whenever $p_{\rm b}\ge (16c_2^2+4c_2)\mu_1c_{A,B}\Gamma_{A,B}s\log(N/\delta_{b})/N$, we have
\begin{equation}\label{eq:batch-contraction}
        \Phi(\tens H)\le \frac12\Phi(\tens M).
\end{equation}
For the normal-space leakage, set
\[
 \tens Z_a=\left(\zeta_a / p_{\rm b}-1\right)
 \inner{\tens G_a}{\tens M}\calP_{T^\perp}\tens G_a.
\]
Then
\begin{align*}
 \|\tens Z_a\|_2\le p_{\rm b}^{-1}\|\tens M\|_{\calG,\infty},\quad
 \max\left\{
 \left\|\sum_{a\in S}\mathbb E\tens Z_a\tens Z_a^*\right\|,
 \left\|\sum_{a\in S}\mathbb E\tens Z_a^*\tens Z_a\right\|
 \right\}\le p_{\rm b}^{-1}\|\tens M\|_{\calG,2}^2.
\end{align*}
A second application of matrix Bernstein \cite[Theorem~1.6]{Tropp2012} gives
\begin{equation}\label{eq:batch-leakage}
 \|\calP_{T^\perp}(p_{\rm b}^{-1}\calA_{\Omega'}-\calA)\tens M\|_2
 \le c_3\left[
 \sqrt{\frac{\log(N/\delta_b)}{p_{\rm b}}}\|\tens M\|_{\calG,2}
 +\frac{\log(N/\delta_b)}{p_{\rm b}}\|\tens M\|_{\calG,\infty}\right]
\end{equation}
with probability at least $1-\delta_b$, for a numerical constant $c_3>0$. We now apply the tangent estimate, the two mixed-norm estimates, and the leakage estimate in a golfing construction. Let $J=\lceil\log_2(8N\sqrt{s})\rceil$, $p_{\rm b}=1-(1-p)^{1/J}$, and draw independent Bernoulli-$p_{\rm b}$ sets $\Omega_1,\ldots,\Omega_J$.  Their union has the same law as $\Omega$, $p_{\rm b}\ge p/J$, and, since $s\le\min\{|A|,|B|\}\le N$ and $\delta<1/2$, $J\le6\log(N/\delta)$. Set $\delta_{b}=\delta/[4(J+1)]$. Since $\Gamma_{A,B}\ge1$, the assumed lower bound on $p$ implies all required lower bounds on $p_{\rm b}$, provided $C_0$ is sufficiently large in terms of $c_1,c_2,c_3$. Applying each estimate conditionally on the preceding batches, the batch estimates hold for all $j$ with total failure probability at most $(4J+1)\delta_b\le\delta$.  Define $\tens Q_0=\tens Q_\star$ and, for $j\ge1$,
\[
 \tens Q_j=\calP_T(\calA-p_{\rm b}^{-1}\calA_{\Omega_j})\tens Q_{j-1},
        \qquad
        \tens W:=\sum_{j=1}^J(p_{\rm b}^{-1}\calA_{\Omega_j}+\calA^\perp)
        \tens Q_{j-1}.
\]
Since $\tens Q_{j-1}$ is independent of $\Omega_j$, \eqref{eq:batch-contraction} gives $\Phi(\tens Q_j)\le2^{-j}\Phi(\tens Q_0)$.  Moreover, $\norm{\tens Q_j}_F\le2^{-j}\sqrt{s}$ by \eqref{eq:batch-tangent-concentration}, and Lemma~\ref{lem:initial-structural-norms} and $\Gamma_{A,B}\ge1$ give $\Phi(\tens Q_0)\le2\sqrt{\mu_1c_{A,B}\Gamma_{A,B}s/N}$. The tangent component telescopes:
\begin{align*}
\calP_T\tens W
=\sum_{j=1}^J\calP_T(p_{\rm b}^{-1}\calA_{\Omega_j}+\calA^\perp)\tens Q_{j-1}
=\sum_{j=1}^J(\tens Q_{j-1}-\tens Q_j)
=\tens Q_\star-\tens Q_J.
\end{align*}
Thus $\calP_T(\tens W-\tens Q_\star)=-\tens Q_J$, and the choice of $J$ gives $\norm{\calP_T(\tens W-\tens Q_\star)}_F\le1/(8N)$.  For the normal component, since $\tens Q_{j-1}\in T$,
\[
 \calP_{T^\perp}(p_{\rm b}^{-1}\calA_{\Omega_j}+\calA^\perp)\tens Q_{j-1}
 =\calP_{T^\perp}(p_{\rm b}^{-1}\calA_{\Omega_j}-\calA)\tens Q_{j-1}.
\]
Since $\norm{\tens Q}_{\calG,2}\le\Phi(\tens Q)$ and
\[
\norm{\tens Q}_{\calG,\infty}
\le\sqrt{\frac{\mu_1c_{A,B}\Gamma_{A,B}s}{N}}\,\Phi(\tens Q),
\]
summing \eqref{eq:batch-leakage} and using the geometric decay of $\Phi(\tens Q_{j-1})$ gives
\[
\norm{\calP_{T^\perp}\tens W}_2
\le4c_3\left(
\sqrt{\frac{\mu_1c_{A,B}\Gamma_{A,B}s\log(N/\delta_b)}{p_{\rm b}N}}
+\frac{\mu_1c_{A,B}\Gamma_{A,B}s\log(N/\delta_b)}{p_{\rm b}N}
\right)\le\frac12
\]
for $C_0$ sufficiently large. Finally, $\tens W\in\range(\calA_\Omega)+\range(\calA^\perp)$. Since $\calA_\Omega\calA^\perp=0$ and $\Omega_j\subset\Omega$, we have
\[
        \norm{\calA_\Omega\tens W}_F
        \le\sum_{j=1}^J
        \norm{p_{\rm b}^{-1}\calA_{\Omega_j}\tens Q_{j-1}}_F .
\]
For $\tens Q_{j-1}\in T$, \eqref{eq:batch-tangent-concentration} gives
\[
\norm{p_{\rm b}^{-1}\calA_{\Omega_j}\tens Q_{j-1}}_F^2
=p_{\rm b}^{-1}\inner{\tens Q_{j-1}}
 {p_{\rm b}^{-1}\calA_{\Omega_j}\tens Q_{j-1}}
\le \frac32p_{\rm b}^{-1}\norm{\tens Q_{j-1}}_F^2.
\]
Therefore
\begin{align*}
\norm{\calA_\Omega\tens W}_F
\le \sqrt{\frac32}\,p_{\rm b}^{-1/2}
        \sum_{j=1}^J\norm{\tens Q_{j-1}}_F
\le 2\sqrt{\frac32}\sqrt{\frac{sJ}{p}}
 \le 6\sqrt{\frac{s\log(N/\delta)}{p}}\le C_{\rm W}\sqrt{\frac{s\log(N/\delta)}{p}}.
\end{align*}
Applying \eqref{eq:batch-tangent-concentration} directly to the full Bernoulli-$p$ set gives \eqref{eq:full-sample-tangent-event-app}.  This completes the proof for $C_0$ sufficiently large.
\end{proof}

\bibliographystyle{plain}
\bibliography{references}

\end{document}